\theoremstyle{plain}
\newtheorem*{D*}{Definition}
\newtheorem*{theorem*}{Theorem}
\newtheorem*{prop*}{Proposition}
\newtheorem*{coro*}{Corollary}
\newtheorem{cor}{Corollary}[section]
\newtheorem{theorem}[cor]{Theorem}
\newtheorem{lemma}[cor]{Lemma}
\newtheorem{proposition}[cor]{Proposition}
\theoremstyle{definition}
\newtheorem{definition}[cor]{Definition}
\newtheorem{remark}[cor]{Remark}
\newtheorem{thmA}{Theorem}
\newtheorem{propA}{Proposition}
\newtheorem{LemmaA}[propA]{Lemma}
\newtheorem{corA}[thmA]{Corollary}
\newcommand{\R}{\ensuremath{\mathbb R}}
\newcommand{\N}{\ensuremath{\mathbb N}}
\newcommand{\C}{\ensuremath{\mathbb C}}
\newcommand{\s}{\ensuremath{\mathbb S}}
\newcommand{\proj}{\ensuremath{\mathbb P}}
\newcommand{\eps}{\ensuremath{\varepsilon}}
\newcommand{\distArg}[1]{\ensuremath{\mbox{d}_{#1}}}
\newcommand{\norm}[1]{\ensuremath{\left\|#1\right\|}}
\newcommand{\cutoff}{\ensuremath{\rho_{\varepsilon}}}
\DeclareMathOperator{\vol}{Vol}
\DeclareMathOperator{\ric}{Ric}
\DeclareMathOperator{\diam}{diam}
\DeclareMathOperator{\Lip}{Lip}
\DeclareMathOperator{\lip}{Lip}
\DeclareMathOperator{\m}{m}
\DeclareMathOperator{\Ch}{Ch}
\DeclareMathOperator{\CBB}{CBB}
\newcommand{\ke}{K}
\newcommand{\Lo}{\mathcal{L}}
\renewcommand{\S}{\mathbb{S}^{n-1}}
\title{Stratified spaces and synthetic Ricci curvature bounds}
\author{J.~Bertrand}
\address{Université P. Sabatier, IMT}
\email{bertrand@math.univ-toulouse.fr}
\author{C.~Ketterer }
\address{University of Toronto}
\email{ckettere@math.toronto.edu}
\author{I.~Mondello}
\address{Université de Paris-Est Créteil, Laboratoire d'Analyse et mathématiques appliquées}
\email{ilaria.mondello@u-pec.fr}
\author{T.~Richard}
\address{Université de Paris-Est Créteil, Laboratoire d'Analyse et mathématiques appliquées}
\email{thomas.richard@u-pec.fr}
\date{}
\begin{document}

\nocite{*}

\begin{abstract}
We prove that a compact stratified space satisfies the Riemannian
curvature-dimension condition $RCD(K,N)$ if and only if its Ricci
tensor is bounded below by $K \in \R$ on the regular set, the cone
angle along the stratum of codimension two is smaller than or equal to
$2\pi$ and its dimension is at most equal to $N$. This gives a new
wide class of geometric examples of metric measure spaces satisfying
the $RCD(K,N)$ curvature-dimension condition, including for instance
spherical suspensions, orbifolds, Kähler-Einstein manifolds with a
divisor, Einstein manifolds with conical singularities along a
curve. We also obtain new analytic and geometric results on stratified
spaces, such as Bishop-Gromov volume inequality, Laplacian comparison,
Lévy-Gromov isoperimetric inequality. Our result also implies a similar characterization of compact stratified spaces carrying a lower curvature bound in the sense of Alexandrov.
\end{abstract}
\maketitle
%\tableofcontents

\section*{Introduction}

%\jb{ Synthetic curvature bound n'est pas précis : c'est aussi le cas pour les Alexandrov, MCP, autre ? }

Singular metric spaces naturally appear in differential geometry when considering quotients of smooth manifolds, their Gromov-Hausdorff limits, when they exist, or geometric flows. One of the main questions when dealing with singularities, is how to define a good notion of curvature, or of curvature bounds. One of the possible and more efficient ways to answer this question is given by the work of K.-T.~Sturm \cite{stugeo1} \cite{stugeo2}, and of J.~Lott together with C.~Villani \cite{lottvillani}, which initiated the study of synthetic Ricci curvature bounds on metric measure spaces. In the recent years, such study has given rise to a rich theory where significant analytic and geometric results intertwine. The idea for the $CD(K,N)$ curvature-dimension condition is to define a lower bound $K$ for the curvature, and an upper bound $N$ for the dimension, in terms of convexity for entropy functionals in the appropriate space of probability measures, the $L^2$-Wasserstein space. L.~Ambrosio, N.~Gigli and G.~Savaré \cite{agsriemannian} \cite{giglistructure} refined the previous condition and introduced the \emph{Riemannian curvature-dimension condition} $RCD(K,N)$, which rules out Finsler geometries. 

Some of the many good features of the Riemannian curvature-dimension condition is that it corresponds, in the setting of smooth Riemannian manifolds, to a standard lower Ricci bound, and moreover it is stable under measured Gromov-Hausdorff convergence ($mGH$ convergence for short). Therefore, $mGH$-limits of smooth manifolds whose Ricci curvature is uniformly bounded below are the first, possibly singular, examples of metric measure spaces satisfying the $RCD$ condition. Other examples are given by finite dimensional Alexandrov spaces with a lower curvature bound \cite{kuwae, Petrunin12}, and weighted manifolds with Bakry-Émery tensor bounded below. % cones and spherical suspensions over metric measure spaces satisfying the \jb{Riemannian} curvature-dimension condition \cite{KD,KWarped}. More recently, F.~Galaz-Garcia, M.~Kell, A.~Mondino et G.~Sosa in  \cite{GalazKellMondinoSosa}, added to this  list of examples: quotients of $RCD(K,N)$ spaces, foliations and orbifolds.% as a particular case of quotients 
%\jb{Je ne comprends pas en quoi un orbifold est un cas particulier de quotients d'où le changement}.
In more general terms, it is now known that all the constructions which preserve a lower Ricci bound in the context of smooth manifolds, cones, suspensions, quotients, (metric) foliations/submersions, also preserve, under some technical assumptions, the $RCD$ condition in the setting of metric measure spaces  \cite{KD,KWarped,GalazKellMondinoSosa}. %One drawback of the known examples 
However, all these examples are, in some sense, rigid: cones in the work of the second author carry an exact cone metric; an orbifold singularity is modeled on a cone over a quotient of the sphere, and other cone sections are not allowed. If we consider a more general and flexible model for conical singularities, isolated or not, on a smooth Riemannian manifold, there isn't any known geometric criterion to establish whether a synthetic lower Ricci bound holds. 

The aim of this paper is to fill this gap and present a new class of geometric examples satisfying a  $RCD$ condition, which includes in particular orbifolds, spherical suspensions over smooth manifolds, and manifolds with conical singularities (isolated or not). More precisely, we give a criterion on compact \emph{stratified spaces}, as defined in the works of K.~Akutagawa, G.~Carron, R.~Mazzeo \cite{ACM12} and of the third author, under which such metric measure spaces satisfy the Riemannian curvature-dimension condition. 

Stratified spaces can be seen as a generalization of manifolds with isolated conical singularities; in fact, they can be decomposed into a regular set $X^{\tiny{reg}}$, which is a smooth manifold of dimension, and a closed singular set, made of \emph{singular strata}  of possibly different dimensions, with a local ``cone-like'' structure. This means that a tubular neighbourhood of a singular stratum is the product of an Euclidean ball and a cone, thus we can consider not only isolated conical singularities, but also conical singularities along a curve or more generally along a submanifold. We focus our attention on compact stratified spaces without boundary, hence the minimal codimension of a singular stratum is two. 

Stratified spaces were first introduced in topology by H.~Withney and R.~Thom, then later studied from a more analytical point of view starting from the work of J.~Cheeger \cite{Cheeger83}. In this paper, we consider stratified spaces with a Riemannian approach; indeed, it is possible to define an iterated edge metric (see  \cite{ALMP} , \cite{ACM12}) which is a Riemannian metric on the regular set, and whose asymptotic expansion is \emph{close} to a model metric, depending on the strata to which the point where the expansion is performed belongs. The fact that we only require closeness to a model geometry gives more flexibility about the choice of the iterated edge metric, including its regularity.

In \cite{M14,M15}, the third author studied compact stratified spaces with a lower Ricci curvature bound. Note that the Ricci tensor is only well-defined on the regular set of a stratified space; one has to be careful about the behaviour of the metric near singular strata, and in particular near the stratum of codimension two. Indeed, the singularities along this stratum are modeled on a two-dimensional metric cone, which has an angle. If such angle is smaller than $2\pi$, then the cone has positive curvature in the sense of Alexandrov, negative otherwise. This plays an important role in the following definition: 

\begin{D*}[Singular lower Ricci bound]
%\label{RicciBD}
Let $X$ be a compact stratified space of dimension $n$ endowed with an iterated edge metric $g$. Let $K \in \R$. We say  that $g$ has {\it singular} Ricci curvature bounded from below by $K$ if
\begin{itemize}
\smallskip
 \item[(i)] $\ric_g\geq K$ on the regular set $X^{\tiny{reg}}$,
 \medskip
 \item[(ii)] the angle $\alpha$ along the stratum $\Sigma^{n-2}$ is smaller than or equal to $2\pi$. 
\end{itemize}
\end{D*}

Observe that we do not need to give any condition on the strata of codimension larger than two, since the condition on the regular set suffices to control the behaviour of the Ricci curvature of the cone sections at those strata. It is not the case for the codimension-two stratum.  %The fact that the Ricci tensor is bounded below on the regular set ensures that the links carry a non-negative Ricci tensor as well on their regular set. When the codimension is larger than two, the Ricci tensor on the link is actually positive; if the codimension is two, the link is a circle and the Ricci tensor vanishes. Therefore, we need a condition on the angle of the cone $C(\s^1)$ in order to avoid the case of angle larger than $2\pi$. 
Using the stability under $mGH$ convergence of the $RCD$ condition, one can guess that some assumption on the cone angle is needed for a $RCD$ condition to hold on a stratified space: in fact, if the space is $RCD$, then all the tangent cones must have a non-negative $RCD$ curvature bound, and K.~Bacher and K-T.~Sturm \cite{bastco} proved that a cone over a manifold of diameter larger than $\pi$ does \emph{not} satisfy a $CD$ condition.

In dimension two, the previous definition corresponds to surfaces with sectional curvature bounded below and isolated conical singularities with angles smaller than $2\pi$. Such singular surfaces are known to be Alexandrov spaces \cite{alexandrov}, and thus are examples of $RCD$ spaces. In higher dimension, more general singularities can occur; stratified spaces satisfying the previous definition include orbifolds, Kähler-Einstein manifolds with a divisor, spherical suspensions over smooth manifolds (or stratified spaces) with a lower Ricci bound.

As proven by the second and the third author, both $RCD$ and stratified spaces share properties with smooth Riemannian manifolds  involving the bottom of the spectrum or the diameter; corresponding rigidity results also hold \cite{M14,M15,KD, KObata}.
%\jb{Under the assumption of a singular  (positive) lower Ricci bound, the third author proved a lower bound for the spectrum of the Laplacian \cite{M14}, an upper bound for the diameter, and rigidity statements when such bounds are attained \cite{M15}. All of these results recover classical theorems of Riemannian geometry when restricted to smooth Riemannian manifolds with positive Ricci tensor (Obata-Lichnerowicz and Myers theorem) and they all have analogs in the setting of $RCD(K,N)$ spaces, thanks to the work of C.~Ketterer (cite \cite{KD},\cite{KObata})}. 
It is then natural to expect, but not elementary to prove, that stratified spaces with a singular lower Ricci bound also satisfy a $RCD$ condition. We are going to prove that the former condition is actually equivalent to the latter. More precisely, taken for granted  that a compact stratified space admits a natural distance $d_g$ as well as a volume measure $v_g$ (see Section 1 for more on these points), our main theorem  states the following: 

\begin{thmA}
\label{MainThm}
Let $(X,g)$ be a compact stratified space  endowed with an iterated edge metric $g$. Equipped with its natural distance $d_g$ and measure $v_g$, the stratified space $(X,d_g,v_g)$ satisfies the $RCD(K,N)$ condition if and only if its dimension is smaller than or equal to $N$ and the iterated edge metric $g$ has singular Ricci curvature bounded below by $K$. 
\end{thmA}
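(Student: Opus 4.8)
The plan is to prove the two implications separately, with the ``only if'' direction being the easier one and the ``if'' direction forming the heart of the paper.

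\emph{The ``only if'' direction.} Suppose $(X,d_g,v_g)$ is $RCD(K,N)$. Since the regular set $X^{\tiny{reg}}$ is a smooth open Riemannian manifold that is geodesically convex in a suitable local sense, and $v_g$ coincides with the Riemannian volume there, the local characterization of the $RCD$ condition on smooth charts forces $\ric_g \geq K$ on $X^{\tiny{reg}}$; similarly the Hausdorff/reference dimension constraint forces $n = \dim X \leq N$. For the cone angle along $\Sigma^{n-2}$, I would argue by blow-up: at a point of the codimension-two stratum a tangent cone of $(X,d_g,v_g)$ splits isometrically as $\mathbb{R}^{n-2} \times C(\mathbb{S}^1_\alpha)$, where $\mathbb{S}^1_\alpha$ is the circle of length $\alpha$ (this uses the iterated edge structure and the stability of $RCD$ under pointed mGH convergence and rescaling, which forces tangent cones to be $RCD(0,N)$). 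Then invoking the Bacher--Sturm result quoted in the introduction—a cone over a space of diameter $>\pi$ is not $CD$—together with the fact that $\mathbb{S}^1_\alpha$ has diameter $\alpha/2$, we get $\alpha/2 \leq \pi$, i.e. $\alpha \leq 2\pi$.

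\emph{The ``if'' direction.} This is the substantial part. Assume $n \leq N$ and the singular lower Ricci bound. The natural strategy is to verify the $RCD(K,N)$ condition via one of its equivalent characterizations that is amenable to the stratified structure—most likely the Bochner inequality (Bakry--Émery $BE(K,N)$) for the canonical Dirichlet form associated to $(X,d_g,v_g)$, combined with the infinitesimal Hilbertianity (which is automatic since the Cheeger energy is a quadratic form, $v_g$ being the Riemannian volume on the smooth regular part and the singular set being $v_g$-negligible). One establishes that the Dirichlet form $\mathcal{E}(u) = \int_{X^{\tiny{reg}}} |\nabla u|^2 \, dv_g$ with domain the closure of Lipschitz functions is the Cheeger energy, using the density results and the analysis of stratified spaces from \cite{ACM12, M14, M15}; a key point is that the singular set, having codimension $\geq 2$, carries zero capacity, so functions need not be controlled there. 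Then the Bochner inequality on the regular set follows pointwise from $\ric_g \geq K$ and $n \leq N$ by the classical computation; the delicate issue is upgrading this to the integrated/weak Bochner inequality valid across the singular strata, i.e. showing no ``boundary term'' concentrates on $\Sigma^{n-2}$. This is exactly where the angle condition $\alpha \leq 2\pi$ enters: when integrating by parts near the codimension-two stratum, the error term one must discard has a sign governed by $(2\pi - \alpha)$ (morally because the codimension-two cone is Alexandrov-nonnegatively-curved precisely when $\alpha \leq 2\pi$), so convexity of the angle makes the term have the favorable sign and it can be dropped. Higher-codimension strata contribute nothing because capacity/volume estimates make the relevant cutoff integrals vanish in the limit, which is why—as the introduction stresses—no extra hypothesis is needed there.

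\emph{Main obstacle.} I expect the crux to be the rigorous justification of the integration-by-parts / cutoff argument near $\Sigma^{n-2}$: one needs a good family of cutoff functions vanishing near the singular set with controlled energy (a Hardy-type or log-cutoff inequality using codimension $\geq 2$), and one must track the boundary contribution along the codimension-two stratum carefully enough to see that its sign is controlled by $2\pi - \alpha \geq 0$. An alternative, perhaps cleaner, route is to prove the equivalent $RCD$ formulation via the Eulerian/$\Gamma_2$-calculus or via optimal transport directly—constructing $K$-convex interpolations of measures supported away from the singular set and passing to the limit—but either way the technical heart is the same: controlling analysis transverse to the codimension-two stratum and exploiting the angle bound to kill the bad term. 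Approximation by smooth metrics (smoothing the conical singularity to a metric with $\ric \geq K - \eps$ and using mGH-stability of $RCD$) is a tempting shortcut, but making such a smoothing preserve the Ricci lower bound up to $o(1)$ while controlling the angle is itself nontrivial and may not be available in full generality, so I would treat the Bochner-inequality approach as the primary line of proof.
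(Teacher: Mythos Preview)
Your ``only if'' direction is correct and matches the paper's argument.

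For the ``if'' direction, the overall architecture is right---the paper does establish $BE(K,n)$ via the weak Bochner inequality and then invokes the equivalence theorem of Erbar--Kuwada--Sturm / Ambrosio--Mondino--Savar\'e, after checking infinitesimal Hilbertianity and the Sobolev-to-Lipschitz property. But you have misidentified the mechanism through which the angle condition $\alpha\le 2\pi$ enters, and this is a genuine gap. There is no boundary term in the integration by parts whose sign is governed by $2\pi-\alpha$. The cut-off argument near $\Sigma$ is sign-blind: one multiplies the pointwise Bochner identity on $X^{\tiny{reg}}$ by $\rho_\varepsilon\psi$, with $\rho_\varepsilon$ vanishing near $\Sigma$ and satisfying $\|\nabla\rho_\varepsilon\|_{L^2}\to 0$ and $\|\Delta_g\rho_\varepsilon\|_{L^1}\to 0$, and one needs the residual terms $\int\psi\,\Delta_g\rho_\varepsilon\,|d\varphi|^2$ and $\int(d\psi,d\rho_\varepsilon)\,|d\varphi|^2$ to vanish as $\varepsilon\to 0$. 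This works provided $|d\varphi|\in L^\infty$. The angle condition is used precisely---and only---to guarantee that regularity: under $\alpha\le 2\pi$, eigenfunctions of $\Delta_g$ are Lipschitz with bounded gradient (a nontrivial result from \cite{M14}, resting on the spectral gap $\lambda_1(S_x)\ge n-1$ for every tangent sphere, which at the codimension-two stratum requires the angle bound). When $\alpha>2\pi$ eigenfunctions are merely H\"older and the cut-off argument collapses.

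Your sketch also omits the approximation hierarchy that makes the argument close. The paper first proves the weak Bochner inequality for a single eigenfunction, then for finite linear combinations of eigenfunctions (same regularity), then passes to general $u$ with $\Delta_g u\in W^{1,2}(X)$ by density of the span of eigenfunctions in the graph norm of $\Delta_g$---a step that initially requires the test function $\psi$ to have bounded gradient---and finally removes that extra hypothesis on $\psi$ by approximating with $P_t\psi_i$ and using $L^1\to L^\infty$ ultracontractivity of the heat semigroup. Without the eigenfunction-regularity input and this staged approximation, there is no route from the pointwise Bochner identity on $X^{\tiny{reg}}$ to the integrated $BE(K,n)$ inequality on all of $X$.
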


Under the assumption of a singular lower Ricci bound, we actually prove a condition referred to as $BE(K,N)$, which is known to be equivalent to $RCD(K,N)$ under some conditions (\cite{EKS,AMS15}). The condition $BE(K,N)$ stays for Bakry-Émery and is inspired by the $\Gamma_2$-calculus developed by these authors, built on the Bochner formula. We emphasize that the proof of the $BE(K,N)$ condition in our setting relies on a non trivial regularity result for the eigenfunctions of the Laplacian due to \cite{M14}, which strongly depends on the angle $\alpha$ along $\Sigma^{n-2}$ being smaller than $2\pi$. To prove the reverse implication, we only need stability properties of the $RCD$ condition mentioned above.
%In order to show that a stratified space which is also an $RCD(K,N)$ space has a singular lower Ricci bound, we first localize the $RCD(K,N)$ condition to the regular set, then study the tangent cone at a point in the stratum of codimension two and use an argument by K.~Bacher and K.-T.~Sturm. As for showing the second implication, that is a compact stratified space with a singular lower Ricci bound is an $RCD(K,N)$ space, our proof relies on the equivalence between the $RCD^*(K,N)$ condition and the integral Bochner inequality, which has been shown by M.~Erbar, K.~Kuwada and K.-T.~Sturm \cite{EKS} (for $K=\infty$ also by L.~Ambrosio, N.~Gigli and G.~Savaré in \cite{agsriemannian}, and for finite $K$ also by L.~Ambrosio, A.~Mondino and G.~Savaré \cite{AMS16}), under the assumptions that we have an appropriate control of the volume growth and the Soboloev-to-Lipschitz property. We show that both of these hypothesis hold on a compact stratified space, then prove the integral Bochner inequality. Since F.~Cavalletti and E.~Milman \cite{CavallettiMilman} proved that the condition $RCD^*(K,N)$ is equivalent to $RCD(K,N)$, we finally get the desired statement. Most of the technical difficulties of the proof are related to proving the Sobolev-to-Lipschitz property and the integral Bochner inequality. 

Not only the previous theorem gives a new ample class of geometric examples of $RCD(K,N)$ spaces, but also allows us to apply the rich theory of $RCD(K,N)$ spaces to stratified spaces. As a consequence, we obtain previously unknown results in this setting such as Laplacian comparisons, Bishop-Gromov volume estimate, Lévy-Gromov isoperimetric inequality. Note that it is not immediate to deduce Laplacian comparisons and volume estimates on stratified spaces, since the classical proofs require regularity properties of the distance function, that can fail to be true when considering the distance to a singular point. Let us also add that it is a difficult problem to understand the behaviour of (long) geodesics on stratified spaces; for instance, it is not known whether the regular set of a compact stratified space with a singular lower Ricci bound is geodesically convex. 
%If this were the case, it would imply an analogous result to Theorem A in the setting of Alexandrov spaces, when the sectional curvature is bounded below on the regular set. 

Nevertheless, our main theorem implies that any stratified space $(X,g)$ with a singular lower Ricci bound is \emph{essentially non-branching} and, as it was pointed to us by V. Kapovitch, its regular set $X^{reg}$ is \emph{almost everywhere convex}. By applying a result of N.~Li \cite{NanLi}, this weak level of control on the geodesics turns out to be enough to show an analogue of Theorem \ref{MainThm} for lower bounds on the sectional curvature:
\begin{corA}\label{MainCor}
Let $(X,g)$ be a compact stratified space. Then $(X,d_g)$ has
curvature bounded from below  by $k$ in the sense of Alexandrov if and
only if the following two conditions are satisfied:
\begin{enumerate}[(i)]
\item The sectional curvature of $g$ is larger than or equal to $k$ on $X^{reg}$.
\item The angle $\alpha$ along the singular stratum is at most $2\pi$.
\end{enumerate}
\end{corA}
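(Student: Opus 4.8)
The plan is to deduce the Alexandrov characterization from the $RCD$ characterization of Theorem \ref{MainThm}, using N.~Li's result relating Alexandrov lower bounds to $RCD$ lower bounds on spaces with enough geodesic control. Let me sketch both implications.

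For the easy (forward) direction, assume $(X,d_g)$ has curvature bounded below by $k$ in the sense of Alexandrov. Alexandrov lower curvature bounds pass to open subsets, so in particular they hold on $X^{reg}$; since $g$ is a smooth Riemannian metric there, Toponogov's theorem (or the standard comparison between Alexandrov and sectional curvature for smooth metrics) forces $\sec_g \geq k$ on $X^{reg}$, giving (i). For (ii), I would argue by blow-up: the tangent cone at a point of the codimension-two stratum $\Sigma^{n-2}$ is a product of $\R^{n-2}$ with a two-dimensional cone of angle $\alpha$, and this tangent cone must itself be Alexandrov with curvature bounded below by $0$ (tangent cones of Alexandrov spaces are Alexandrov with nonnegative curvature). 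A flat cone of angle $\alpha > 2\pi$ is not Alexandrov with curvature $\geq 0$ (it has a point with excess, i.e.\ the Toponogov condition fails), so $\alpha \leq 2\pi$, which is (ii).

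For the reverse (substantive) direction, suppose (i) and (ii) hold. Condition (i) is stronger than a singular lower Ricci bound with constant $K = (n-1)k$, and (ii) is exactly the cone angle condition, so Theorem \ref{MainThm} applies and $(X, d_g, v_g)$ is $RCD((n-1)k, n)$. The bridge to Alexandrov geometry is N.~Li's theorem \cite{NanLi}: an essentially non-branching $RCD(K,N)$ space which is, in a suitable almost-everywhere sense, non-branching and whose ``regular part'' is convex, has curvature bounded below by $k$ in the Alexandrov sense provided its finite-dimensional tangent cones do, where $k = K/(N-1)$. So the remaining work is to verify the hypotheses of Li's theorem in our setting. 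First, essential non-branching: this follows from the $RCD(K,N)$ condition itself (Rajala--Sturm), once we know $(X,d_g,v_g)$ is $RCD$. Second, the almost-everywhere convexity of $X^{reg}$: as indicated in the excerpt (attributed to V.~Kapovitch), $X^{reg}$ is open, full measure, locally geodesically convex away from a small set, and one checks that $v_g$-a.e.\ pair of points is joined by a minimizing geodesic contained in $X^{reg}$ — this can be extracted from the fact that geodesics in an $RCD$ space avoid sets of codimension $\geq 2$ in a measure-theoretic sense, combined with the smooth structure on $X^{reg}$. Third, the tangent cone condition: the (measured, or pointed-Gromov--Hausdorff) tangent cones of $(X,d_g)$ are, at regular points, Euclidean $\R^n$, and at singular points they are iterated cones built from spherical suspensions of lower-dimensional stratified spaces with $\sec \geq 1$ (hypotheses (i), (ii) being inherited by the links by an inductive argument on depth); by induction on the depth of the stratification each such tangent cone is Alexandrov with curvature $\geq 0$ — the base case being the two-dimensional cone of angle $\leq 2\pi$, and the inductive step using that a metric cone over an Alexandrov space with curvature $\geq 1$ is Alexandrov with curvature $\geq 0$ (and similarly for spherical suspensions).

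The main obstacle I anticipate is the almost-everywhere convexity of $X^{reg}$ and, relatedly, the precise matching of hypotheses with whatever exact statement of Li's theorem one invokes: one needs to know that minimizing geodesics between generic points stay in the regular set, which is subtle because it is genuinely open whether \emph{all} geodesics avoid the singular strata (the excerpt itself flags that geodesic convexity of $X^{reg}$ is not known). The resolution is that Li's criterion only requires this for a full-measure set of endpoint pairs, and this weaker statement does follow from $RCD$ theory — but care is needed to cite the right measure-theoretic non-branching / geodesic-avoidance results and to confirm they give exactly the input Li needs. A secondary, more routine obstacle is the inductive verification that the links (cone sections) at each stratum again satisfy (i) and (ii); this is bookkeeping on the definition of iterated edge metric, using that the model metric near a stratum of codimension $c$ involves a link which is itself a compact stratified space, and that curvature bounds on $X^{reg}$ restrict to curvature bounds on the regular sets of these links together with the inherited codimension-two angle condition.
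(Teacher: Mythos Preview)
Your overall strategy matches the paper's: the forward direction via Toponogov on $X^{reg}$ and a blow-up argument at the codimension-two stratum, and the reverse direction via Theorem~\ref{MainThm} to get $RCD$, then almost-everywhere convexity of $X^{reg}$, then N.~Li's globalization result. That skeleton is correct.

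Where you go astray is in the statement of Li's theorem and hence in what remains to be checked. Li's result (Corollary~0.1 of \cite{NanLi}) is purely Alexandrov-geometric and says nothing about $RCD$ or tangent cones: if a geodesic metric space of Hausdorff dimension $n$ contains an open dense subset $Y$ that is $\mathcal{H}^n$-a.e.\ convex and such that every point of $Y$ has a \emph{convex neighbourhood which is $\CBB(k)$}, then the whole space is $\CBB(k)$. So your ``third verification'' --- the inductive argument that all tangent cones are $\CBB(0)$ --- is unnecessary. What you actually need is that each point of $X^{reg}$ has a convex $\CBB(k)$ neighbourhood, and this is immediate: $X^{reg}$ is a smooth Riemannian manifold with $\sec_g\geq k$, so small geodesically convex balls exist (do Carmo) and are $\CBB(k)$ by the classical Riemannian Toponogov theorem. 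No induction on depth, no analysis of links.

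The $RCD$ condition therefore enters only to supply the a.e.\ convexity of $X^{reg}$; in the paper this is proved directly from the measure contraction property (a consequence of $RCD$) together with the fact that $\vol_g(\Sigma^{\varepsilon})=O(\varepsilon^2)$, rather than by citing a general ``geodesics avoid codimension-two sets'' principle. You correctly flagged this as the delicate step, but the resolution is a short self-contained estimate, not an external black box.
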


%Finally, we would like to point out that for both $RCD$ spaces and stratified  ones, it is in general a difficult question to understand if such a space is a limit of smooth manifolds \jb{with Ricci curvature uniformly bounded below ?}. For instance, thanks to the work of M.~Simon and P.~Topping \cite{SimonTopping17}, the cone over $\R\proj^2$, which is both a $RCD(K,N)$ space and a stratified space, cannot be a non-collapsed limit of Riemannian manifolds \jb{with Ricci curvature uniformly bounded below ?}.
Finally, we would like to point out that $RCD(K,N)$ spaces includes, but are not necessarily, Ricci limit spaces. For example, it is known that the spherical suspension over $\R\proj^2$ is a $RCD(K,N)$ space, and it is a compact stratified space with singular Ricci lower bound as well, but, as observed by G.~De Philippis, A.~Mondino and P.~Topping, it cannot be a non-collapsed limit of Riemannian manifolds. Indeed, M.~Simon proved in \cite{Simon2012} that the Gromov-Hausdorff limit of a sequence of 3-manifolds with a lower bound on the Ricci tensor and an upper bound on the diameter must be a topological manifold, which the spherical suspension over $\R\proj^2$ is not. M.~Simon's results proves a conjecture of of M.~Anderson, J.~Cheeger, T.-H.~Colding and G.~Tian, which has also been shown in \cite{SimonTopping17} without assuming the upper bound on the diameter. 
%For example, it is known that the cone over $\R\proj^2$ is an $RCD(K,N)$ space, and it is a stratified space with singular Ricci lower bound as well, but thanks to the work of M.~Simon and P.~Topping \cite{SimonTopping17} it cannot be a non-collapsed limit of Riemannian manifolds. Indeed, Corollary 1.5 in \cite{SimonTopping17} proves in the three-dimensional case a conjecture of M.~Anderson, J.~Cheeger, T.-H.~Colding and G.~Tian, which states that a non-collapsed pointed Gromov-Hausdorff limit of Riemannian manifolds must be a topological manifold, which $C(\R\proj^2)$ is not. 
It is in general a very difficult question to find new examples of $RCD$ spaces not arising as Gromov-Hausdorff limits of smooth manifolds; moreover, even in the simple case of cones and spherical suspension, for example over $\R\proj^2$, it is not easy to figure out whether they are \emph{collapsed} limits of Riemannian manifolds or not. Having a wider class of geometric examples of $RCD$ spaces could contribute to make progresses in solving this question. 

The paper is organized as follows. The first section is devoted to illustrate some notions about stratified spaces which will be used throughout the paper. In the second section, we recall the basics of curvature-dimension conditions on metric measure spaces. We then give some examples of stratified spaces which carry a singular lower Ricci bound, and thus, thanks to our main theorem, satisfy the $RCD(K,N)$ condition. The fourth section is devoted to reformulating some analytic and geometric results about $RCD(K,N)$ spaces in the setting of stratified spaces with a singular lower Ricci bound, as an application of our main theorem. It also includes the proof of Corollary \ref{MainCor}. The last section contains the proof of the main theorem. \\

\noindent \textbf{Acknowledgments:} We would like to thank V.~Kapovitch for the proof of Corollary B, N. Gigli, A.~Mondino, V.~Bour and R.~Perales for useful discussions and comments on the preliminary version of this article. The third author is supported by a public grant overseen by the Centre National de Recherche Scientifique (CNRS) as part of the program ``Projet Exploratoire Premier Soutien Jeunes Chercheur-e-s''. . 

\section{Preliminaries on stratified spaces}\label{sec-pre-stra}

%\subsection{Stratified spaces}

\subsection{Definition of compact stratified spaces and iterated edge metrics}

\subsubsection{Definition and differential properties}

The topological definition of a stratified space can be given by induction with respect to a quantity called \emph{depth} of the space. For the sake of simplicity, we present here a definition by induction on the dimension. In the definition and in the following we will use \emph{truncated cones}. A truncated cone over a metric space $Z$ is the quotient space $([0,1]\times Z)/\sim$ with the equivalence relation $(0,z_1)\sim (0,z_2)$ for all $z_1,z_2\in Z$. If the interval is not $[0,1]$ but $[0,\delta)$ for some $\delta >0$ we will write $C_{[0,\delta)}(Z)$.

\begin{definition}
A one-dimensional compact stratified space is simply a connected compact differentiable manifold of dimension one. For $n$ larger than one, assume that we have defined $(n-1)$ dimensional compact stratified spaces. Then, an $n$-dimensional stratified space is a connected compact topological space $X$ such that the following properties hold:
\begin{itemize} 
%\medskip
%\item[(1)] $X$ satisfies the Hausdorff property and is second countable,
\medskip
 \item[(a)]There exists a decomposition of $X$ in \emph{strata}
$$X = {\bigsqcup}_{j=0}^n\ \Sigma^j,$$ 
where $\Sigma^0$ is a finite set of points, and $\Sigma^j$ are smooth, possibly open, manifolds of dimension $j\in \{1,\cdots ,n\}$. Each $\Sigma^j $ is called a {\it stratum}. We assume $X$ is without boundary, namely $\Sigma^{n-1}= \emptyset$. The closure of $\Sigma^j$ is required to satisfy
\begin{equation}\label{clos}
 \displaystyle \overline{\Sigma^j} \subset \bigcup_{l \leq j} \Sigma^l.
\end{equation}

\noindent We further define the {\it regular set} $X^{reg}$ as the stratum $\Sigma^n$ of highest dimension and the {\it singular set} $\Sigma$ as its complement, namely  $\Sigma= \cup_{j=0}^{n-2}\Sigma^j$. The strata of dimension $j \leq (n-2)$ are called \emph{singular strata}. Thanks to (\ref{clos}) $\Sigma$ is a closed set, thus the regular set $X^{reg}$ is an {\it open and dense} subset of $X$.

%for some smooth, open, and $n$-dimensional manifold $\Omega$ that is dense in $X$, and $$\Sigma=\dot{\bigcup}_{j=0}^{n-2}\ \Sigma^j$$ , and $\bigcup_{l=1}^j\Sigma^l$ for $j=1,\dots,n-2$ is closed. We say $\Omega=:X^{reg}$ is the set of regular points, $\Sigma=:X^{sing}$ is the set of singular points and 
%$\left\{\Sigma^j\right\}_{j=1,\dots,n-2}$ is the family of strata.
\medskip
\item[(b)] Each connected component $\widetilde{\Sigma}^j$ of the singular stratum $\Sigma^j$ of dimension $j \leq (n-2)$ admits a neighbourhood $\mathcal{U}_j$ homeomorphic to the total space of a bundle of truncated cones over $\widetilde{\Sigma}^j$. More precisely, there exists a retraction 
$$\pi_j: \mathcal{U}_j \rightarrow \widetilde{\Sigma}^j,$$ and a compact {\it stratified} space $Z_j$ of dimension $(n-j-1)$ such that $\pi_j$ is a cone bundle whose fibre is a truncated cone over $Z_j$. We set $\rho_j: \mathcal{U}_j \rightarrow [0,1]$ the {\it radial function} where $ \rho_j(x)$ stands for the radial factor in the conical fiber $\pi_j^{-1}(\{x\})$.  The stratified space $Z_j$ is called the \emph{link} of the stratum.
%{\color{red} Je ne comprends pas cette définition globale avec $ \mathcal{U}_j$ si la strate $\Sigma_j$ est ouverte : dans l'adhérence de $ \Sigma_j$ (donc dans  $ \mathcal{U}_j$)  on va avoir des points d'une (ou plusieurs) strate de dimension plus petite, je ne vois pas comment tout cela peut être homéo à un fibré conique avec un link fixé $Z_j$. Je manque peut-être d'imagination ceci-dit.  }
\end{itemize}
\end{definition}

\noindent 
We are interested in studying \emph{smoothly} stratified spaces. This means that the cone bundle given in the definition is assumed to satisfy a {\it smoothness} property that we now describe. Indeed, we have a notion of \emph{local chart} in a neighbourhood of a singular point and such chart is smooth on the regular subset of the neighbourhood. More precisely, for each $x \in \Sigma^j$ there exists a relatively open ball $B^j(x) \subset \Sigma^j$ and a homeomorphism $\varphi_x$ such that: 
\begin{center}
$\varphi_x: B^j(x) \times C_{[0,\delta_x)}(Z_j) \longrightarrow \mathcal{W}_x:=\pi_j^{-1}(B^j(x))$, 
\end{center}
satisfies $\pi_j\circ \varphi_x=p_1$ where $\delta_x>0$, and $p_1$ is the projection on the first factor of the product \mbox{$B^j(x)\times C_{[0,\delta_x)}(Z_j)$}. %We also set  $\rho_j: \mathcal{U}_j \rightarrow [0,1]$ the radial map $\rho_j:= p_2 \circ \varphi_x^{-1}$ where $p_2$ is the radial projection on the second factor. %Since $\Sigma^j$ is a smooth manifold of dimension $j$, we can identify $B^j(x)$ to an open set in $\R^j$: reformulating the previous, for each $x \in \Sigma^j$ there exist an open neighbourhood $\mathcal{W}_x$ in $X$, a positive radius $\delta_x$ and a homeomorphism $\varphi_x$: 
%$$\varphi_x: \mathbb{B}^j(\delta_x)\times C_{[0,\delta_x)}(Z_j)\rightarrow \mathcal{W}_x$$
%where $\mathbb{B}^j(\delta_x)$ is a ball of radius $\delta_x$ in $\R^j$. 
Moreover, $\varphi_x$ restricts to a smooth {\it diffeomorphism} on the regular sets, that is from $\left(B^j(x)\times C_{[0,\delta_x)}(Z_j^{reg}\right))\setminus \left(B^j(x))\times \{0\}\right)$ onto the regular subset $\mathcal{W}_x\cap X^{reg}$ of $\mathcal{W}_x$.  

%We refer to $\varphi_x$ as a \emph{local chart} around the point $x$ in $\Sigma^j$. 

\begin{remark}
First examples of stratified spaces are manifolds with isolated conical singularities and orbifolds. In this second case, the links are quotients of the sphere by a finite subgroup of $O(n)$, acting freely on $\R^n \setminus \{0\}$. Note that not all stratified spaces are necessarily orbifolds. 
\end{remark}

\begin{remark}
%The previous reformulation is useful to have a better picture of the local geometry of a stratified space. For
To have a better picture of the local geometry, let us point out the case of a stratum $\Sigma^{n-2}$ of minimal codimension: each point of $\Sigma^{n-2}$ has a neighbourhood which is homeomorphic to the product of a ball in $\R^{n-2}$ and a two-dimensional truncated cone. There is only one possibility for the link of $\Sigma^{n-2}$: since it has to be a one-dimensional compact stratified space, it is a circle $\s^1$. 
\end{remark}

%For every $x\in \Sigma^j$ there is an open neighbourhood $W_x$ in $X$ of $x$, a stratified space $Z^{j'}_x$ of dimension $j':=(n-j)-1$ and a homeomorphism $\phi_x:W_x\rightarrow B^j_1(0)\times C_1(Z_x^{j'})$.
%$C_1(Z^{j'}_x)$ is the truncated, topological cone
%over $Z_x^{j'}$. Moreover, $$\phi_x|_{\Omega\cap W_x}:\Omega\cap W_x\rightarrow B^j_{1}(0)\times C_1(Z_x^{{j'},reg})\backslash B^j_{1}(0)\times \left\{o\right\}$$ is a diffeomorphism. $o$ is the tip of the cone $C_1(Z_x^j)$.

%\begin{remark}
%We say $\phi_x$ as above is a chart for the stratified space around $x\in \Sigma^j$. For a Riemannian metric $g$ on $\Omega$ we define $\left[(\phi_x|_{\Omega\cap W_x})^{-1}\right]^{\star}g=g^{\phi_x}$.
%\end{remark}

%\vspace{0.3cm}
%%%%%%%%%%%%%%%%%%%%%%%%%%%%%%%%%%%%%%%%%%%%%%%%%%%%%%%%%%%%ù
%%%%%%%%%%%%%%%%%%%%%%%%ITERATED EDGE METRIC%%%%%%%%%%%%%%%%

\subsubsection{Iterated edge metric}
We are going to define a class of Riemannian metrics on a stratified space: iterated edge metrics. Such metrics are proven to exist in \cite{ALMP}. We briefly sketch the inductive construction of an iterated edge metric. Recall that in dimension one a stratified space is a standard smooth manifold; in this case an iterated edge metric is nothing but a smooth Riemannian metric. Assume that we have constructed an admissible iterated edge metric on compact stratified spaces of dimension $k \leq (n-1)$ and consider a stratified space $X^n$. In order to define an iterated edge metric, we first set a model metric on $\mathcal{U}_j$, the neighbourhood of a connected component of the singular stratum $\Sigma^j$ introduced in the previous paragraph.

% Assume that an iterated edge metric is defined on stratified spaces of dimension smaller than $n$ (recall that in 1D, a stratified space is a smooth manifold; in that case an iterated edge metric is just a Riemannian metric). Let us now explain how to construct an iterated edge metric in dimension $n$ before we give the formal definition of an iterated edge metric. 

Consider $k_j$ a symmetric 2-tensor on $\partial \mathcal{U}_j=\rho_j^{-1}(1)$ which restricts to an admissible metric on each fibre of the cone bundle $\pi_j$ and vanishes on a $j$-dimensional subspace. Such tensor exists because the link $Z_j$ is a stratified space of dimension smaller than $n$. We define the \emph{model metric} on $\mathcal{U}_j$ as follows:
$$g_{0,j}=\pi_j^*h+d\rho_j^2+\rho_j^2k_j,$$
where $h$ is a smooth Riemannian metric on $\Sigma^j$. Observe that, in terms of the local coordinates given by a chart $\varphi_x$ around $x \in \Sigma_j$, if $(y,\rho_j,z)$ are the coordinates of a point in $\mathcal{U}_j$ with $y$ in $\R^j$ and $z$ in $Z_j$, $h$ only depends on $y$, while $k$ depends on $y$ and $z$. 
%We are now in position to define an iterated edge metric:  
%Here is the formal definition of the iterated edge metric.

\begin{definition}[Iterated edge metric]
Let $X$ be a stratified space with strata $\Sigma^j$ and links $Z_j$; let $g_{0,j}$ be the model metric defined above. A smooth Riemannian metric $g$ on the regular set $X^{\tiny{reg}}$ is said to be an \emph{iterated edge metric} if there exist constants $\alpha, \Lambda >0$ such that for each $j$ and for each $x\in \Sigma^j$ we have: 
\begin{equation}\label{e-def-ite-met}
 |\varphi_x^*g-g_{0,j}|\leq \Lambda r^{\alpha}, \mbox{ on } \mathbb{B}^j(r)\times C_{(0,r)}(Z_j^{\tiny{reg}}),
\end{equation}
for any $r < \delta_x$ (where $\delta_x$ and the local chart $\varphi_x$ are defined as above and $|\cdot|$ refers to the norm on tensors induced by $g_{0,j}$). 
\end{definition}

\begin{remark}
When we consider a stratum $\Sigma^{n-2}$ of codimension 2, the link is a circle $\s^1$, and therefore the model metric around $x \in \Sigma^{n-2}$ has the following form: 
$$g_{0,n-2}= \pi_{n-2}^*h+d\rho_{n-2}^2+\rho_{n-2}^2(a^2_xd\theta^2),$$
where $a^2_xd\theta^2$ is a metric on $\s^1$, for $a_x\in (0, +\infty)$. We refer to $\alpha_x=a_x \cdot 2\pi$ as the \emph{angle} of $\Sigma^{n-2}$ at $x$, since $\alpha_x$ is the angle of the exact cone $(C(\s^1), d\rho^2+a_x^2\rho^2d\theta^2)$. Note that $\alpha_x$ may depend on $x$, and it can be smaller or larger than $2\pi$. This will play a crucial role in studying  lower curvature bounds. 
\end{remark}

\subsection{Stratified space viewed as Metric Measure Space}

In this part, we introduce a distance and a measure on a compact stratified space equipped with an iterated edge metric. To this aim, we shall follow and use properties from the book \cite{bbi}.

\subsubsection{Distance on a stratified space}\label{dist_strat}
%By induction w.r.t. $\dim X^{reg}=n$ we define a smooth Riemannian metric $g$ on $\Omega$ to be an \emph{iterated edge metric} on $X$ if for  any $x\in \Sigma$ there exists $\delta_x$ such that for every $0<\delta\leq \delta_x\leq 1$ we have
%\begin{align*}
%\left| \varphi^*_xg|_{B^j_{\delta}\times C_{\delta}(Z^{j',reg}_x)} - g_0\right|_{g_0}\leq C\delta^{\gamma}
%\end{align*}
%if $x\in \Sigma^j$ and for some $\gamma>0$ where $g_0=g_{\mathbb{R}^j}+(dr)^2+r^2k$ and $k$ is an admissible metric for $Z_x^{j',reg}$ for $j'\in \left\{1,\dots,n-1\right\}$.
%\end{definition}

\begin{definition}[Length structure]
If $g$ is an iterated edge metric for $X$, one introduces the associated length structure as follows. A continuous curve $\gamma:[a,b]\rightarrow X$ is said to be {\it admissible} if the image $\gamma([a,b])$ is contained in $X^{reg}$ up to finitely many points; we further assume $\gamma$ to be $C^1$ on the complement of this finite set. We then define the length of such a curve as
\begin{align*}
\mbox{L}_g(\gamma)=\int_a^b|\dot{\gamma}(t)|dt.
\end{align*}
Consequently, we define the distance between two points $x,y \in X$ as follows:
$$d_{g}(x,y)=\inf\{\mbox{L}_g(\gamma)| \gamma: [a,b] \rightarrow X \mbox{ admissible curve s.t. } \gamma(a)=x, \gamma(b)=y\}.$$ 
%It is not hard to check that $d_g$ induces the given topology, and that the minimum is attained for a Lipschitz continuous curve. Each such minimizer is called a minimizing geodesic between $x,y\in X$.
\end{definition}

It is rather straightforward to check that the above length structure meets the hypotheses described in \cite[Chapter 2]{bbi} which ensures that $d_g$ is indeed a distance.% with the exception that the length structure agrees with the topology.  This property together with the fact that the distance $d_g$ is continuous with respect to the topology follows from the topological properties of a stratified space together with 

We shall also use the following result: 

\begin{lemma}
\label{PropLipCurve}
Let $(X,g)$ be a compact stratified space of dimension $n$ endowed with the iterated edge metric $g$. Let $\gamma:[0,1] \rightarrow X$ be an admissible curve. For any $\varepsilon>0$, there exists an admissible curve $\gamma_{\varepsilon}$ with the same endpoints as $\gamma$ and such that $\gamma_{\varepsilon}((0,1))$ is contained in the regular set $X^{\tiny{reg}}$. Moreover
$$\mbox{L}_g(\gamma_{\varepsilon}) \leq \mbox{L}_g(\gamma)+ \varepsilon.$$
\end{lemma}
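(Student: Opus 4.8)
The plan is to reduce the lemma to a local statement near the singular set and then iterate over the finitely many singular points that $\gamma$ passes through. Since $\gamma$ is admissible, by definition the set $S = \{t \in [0,1] : \gamma(t) \in \Sigma\}$ is finite. It therefore suffices to show that near a single such point we can perturb $\gamma$ slightly, at a cost in length that is as small as we like, so that the perturbed curve avoids the singular set except possibly at the two endpoints of the small arc being modified; doing this at each of the finitely many points of $S$ (with the total error budget $\varepsilon$ split among them) and at the endpoints $\gamma(0),\gamma(1)$ if those happen to be singular gives the conclusion. So the real content is the following local claim: given a point $p = \gamma(t_0) \in \Sigma^j$, a small parameter $\delta$, and $\eta > 0$, one can replace $\gamma|_{[t_0-\delta, t_0+\delta]}$ by an admissible arc with the same endpoints whose interior lies in $X^{reg}$ and whose length exceeds the original by at most $\eta$.

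\textbf{Key steps.} First, I would work in a local chart $\varphi_x$ around $p$, so that a neighborhood of $p$ is identified with $\mathbb{B}^j(r) \times C_{(0,r)}(Z_j^{reg})$ (together with its singular part), and the metric $g$ is, by the iterated edge condition \eqref{e-def-ite-met}, uniformly comparable to the model metric $g_{0,j} = \pi_j^* h + d\rho_j^2 + \rho_j^2 k_j$; in particular lengths computed in $g$ and in $g_{0,j}$ differ by a bounded multiplicative factor $1 + O(r^\alpha)$, so it is enough to make the construction for $g_{0,j}$ and then take $r$ small. Second, near $p$ the curve $\gamma$ enters and leaves the neighborhood; I would choose $t_0 - \delta < t_0 < t_0 + \delta$ so that $a := \gamma(t_0 - \delta)$ and $b := \gamma(t_0 + \delta)$ are regular points close to $p$ (possible because $\gamma$ is continuous, $X^{reg}$ is dense, and $S$ is finite, so $\gamma$ is regular on a punctured neighborhood of $t_0$). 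Third, I would construct a replacement arc from $a$ to $b$ lying in $X^{reg}$: concretely, push the curve off the cone tip by keeping the radial coordinate $\rho_j$ bounded below by a small $\sigma > 0$ — e.g. go from $a$ radially/along the link to a point at radius $\sigma$, move within the slice $\{\rho_j = \sigma\}$ (which is entirely regular once we also stay in $Z_j^{reg}$, using density of $Z_j^{reg}$ in the link and an inductive use of the lemma in lower dimension if the link itself is singular), then back out to $b$. The length of such a detour in the model metric is controlled by the diameter of the neighborhood plus $O(\sigma)$, so shrinking $r$ and $\sigma$ makes the excess length smaller than any prescribed $\eta$. Finally, reparametrize to keep the curve $C^1$ off the finite bad set and admissible.

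\textbf{Main obstacle.} The delicate point is handling the link $Z_j$ when it is itself a singular stratified space: moving "within the slice $\{\rho_j = \sigma\}$" from one near-radial direction to another may force the curve to cross lower-dimensional strata of $Z_j$. This is exactly where the induction on dimension enters — one applies the lemma to the compact stratified space $Z_j$ (of dimension $n - j - 1 < n$) to connect the two points of $Z_j^{reg}$ by an admissible curve in $Z_j$ staying in $Z_j^{reg}$ with small length excess, and then cones that off at fixed radius $\sigma$. Making the bookkeeping of the three error sources — the $r^\alpha$-distortion between $g$ and $g_{0,j}$, the $O(\sigma)$ radial detour, and the inductive link error — add up to less than $\varepsilon$ requires choosing the parameters in the right order ($r$ first, then $\sigma$, then the link resolution), but is otherwise routine. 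A secondary, purely technical nuisance is ensuring the concatenated curve is genuinely admissible (finitely many non-$C^1$ points, image in $X^{reg}$ off a finite set), which one arranges by smoothing the finitely many corners introduced by the construction.
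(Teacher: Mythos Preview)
Your proposal is correct and follows essentially the same approach as the paper: reduce to a single singular crossing, work in a local model where the metric is comparable to a cone metric, and apply the lemma inductively to the lower-dimensional cross-section to produce a short replacement arc at a small fixed radius. The paper organizes this slightly differently by passing to the cone $C(S_x)$ over the $(n-1)$-dimensional tangent sphere rather than the product chart $\mathbb{B}^j\times C(Z_j)$, so that there is a single radial coordinate (the distance to $p$) and the replacement arc at radius $\delta$ has $g_C$-length exactly $\delta\cdot L_h(c_\varepsilon)$; this streamlines the bookkeeping you flag in your ``main obstacle'' paragraph but is otherwise the same idea.
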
 

More details about the length structure, including a proof of the above Lemma, can be found in the appendix.

As proved in \cite[Chapter 2]{bbi}, the length structure induced by $d_g$ gives rise to a distance $\hat{d}$ which coincides with $d_g$. This fact allows us to consider the larger set of {\it rectifiable curves} (where the length is intended as the standard one  on a metric space). Note also that according to the above lemma, the distance $d_{g}(x,y)$ is the infimum of lengths of admissible curves whose range is in $X^{reg}$ except maybe the endpoints.

Consequently, $(X,d_g)$ is a compact length space, thus Ascoli-Arzela's theorem implies it is a {\it geodesic space}.

%From now on we only consider \emph{compact} stratified spaces. Consequently, 

\vspace{0.3cm}

\subsubsection{Tangent cones and geometry of small geodesic balls}\label{subsubsec:tangentcones}

The local geometry of a singular point $x \in \Sigma$ is also well-understood. Let us start with a result on tangent cones, namely the Gromov-Hausdorff limits of the pointed metric spaces $(X, \varepsilon^{-1}d_g, x)$ as $\varepsilon$ goes to zero.
%For later use, let us also report the fact that tangent cones exist and are unique at every point.
\begin{lemma} Let $x$ be a point in $(X, d_g)$. Then there exists a {\it unique} tangent cone $T_x X$ at $x$. Moreover, when $x \in \Sigma^j$, this cone is isometric to $\R^j \times C(Z_j) $ equipped with the (distance induced by the) product metric $dx^2+d\rho^2+\rho^2k_j $. 
\end{lemma}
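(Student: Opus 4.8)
The plan is to prove that the pointed spaces $(X,\varepsilon^{-1}d_g,x)$ converge, in the pointed Gromov--Hausdorff sense as $\varepsilon\to0$, to $\mathbb{R}^j\times C(Z_j)$ equipped with the product metric $dx^2+d\rho^2+\rho^2k_j$ (with $k_j$ evaluated at $x$); uniqueness of the tangent cone is then automatic, since the limit is the same for every sequence $\varepsilon_k\to0$. The first step is to localize: because $B_{\varepsilon^{-1}d_g}(x,R)=B_{d_g}(x,\varepsilon R)$, the tangent cone at $x$ depends only on $g$ in an arbitrarily small neighbourhood of $x$. If $x\in X^{reg}$ (i.e.\ $j=n$, the link being empty and $C(Z_n)$ a single point), this is the classical fact that a smooth Riemannian metric has Euclidean tangent cone, so we assume $j<n$. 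We then fix a local chart $\varphi_x\colon B^j(x)\times C_{[0,\delta_x)}(Z_j)\to\mathcal{W}_x$ and pull back the metric: in coordinates $(y,\rho,z)$ we have $\varphi_x^*g=g_{0,j}+E$, where $g_{0,j}=h(y)+d\rho^2+\rho^2k_j(y,z)$ and $|E|_{g_{0,j}}\le\Lambda r^\alpha$ on $B^j(r)\times C_{(0,r)}(Z_j^{reg})$.

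Second, I blow up the model metric. Let $\Phi_\varepsilon(y,\rho,z)=(\varepsilon y,\varepsilon\rho,z)$. A direct computation gives $\varepsilon^{-2}\Phi_\varepsilon^*g_{0,j}=h_\varepsilon(y)+d\rho^2+\rho^2k_j(\varepsilon y,z)$, where $h_\varepsilon(y)$ is the coefficient matrix $(h_{ab}(\varepsilon y))$ of $h$; since $h$ and $k_j$ are smooth, $h_\varepsilon\to h(0)$ and $k_j(\varepsilon y,z)\to k_j(0,z)=k_j|_x$ locally uniformly as $\varepsilon\to0$. After a fixed linear change of the $y$-coordinates we may take $h(0)$ to be the Euclidean metric $dx^2$. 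Hence, on every region $\{|y|\le CR,\ 0<\rho\le CR\}$, the rescaled model $\varepsilon^{-2}\Phi_\varepsilon^*g_{0,j}$ converges in $C^\infty$ on the regular set to the product metric $dx^2+d\rho^2+\rho^2k_j|_x$ on $\mathbb{R}^j\times C(Z_j)$; in particular the associated length distances converge uniformly on compact sets, using Lemma \ref{PropLipCurve} to reduce to curves contained in the regular set.

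Third, I absorb the error term $E$. Since $|\cdot|$ is the $g_{0,j}$-norm, the \emph{relative} size of $E$ is scale invariant: $|\varepsilon^{-2}\Phi_\varepsilon^*E|_{\varepsilon^{-2}\Phi_\varepsilon^*g_{0,j}}=|E|_{g_{0,j}}\circ\Phi_\varepsilon\le\Lambda(\varepsilon r')^\alpha$ on the rescaled region $\{r'\le CR\}$, which tends to $0$ as $\varepsilon\to0$. Thus on $B_{d_g}(x,\varepsilon R)$ the tensors $\varphi_x^*g$ and $g_{0,j}$ are $(1+o(1))$-bi-Lipschitz, and combined with the previous step the rescaled pulled-back metric $\varepsilon^{-2}\Phi_\varepsilon^*\varphi_x^*g$ converges to the product cone metric. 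To upgrade this to pointed Gromov--Hausdorff convergence of the length spaces, one checks that minimizing curves stay near $x$: if $d_g(p,x),d_g(q,x)<\varepsilon R$, then a minimizer from $p$ to $q$ has length $<2\varepsilon R$, hence remains in $B_{d_g}(x,3\varepsilon R)\subset\mathcal{W}_x$ for $\varepsilon$ small, so its $g$-length is comparable to its product-cone length up to a factor $1+o(1)$. Together with the density of $X^{reg}$ and Lemma \ref{PropLipCurve}, this shows that $\Phi_\varepsilon$ restricted to $B(o,R)$ in the product cone is a $\delta(\varepsilon)$-Gromov--Hausdorff approximation onto $B_{\varepsilon^{-1}d_g}(x,R)$ with $\delta(\varepsilon)\to0$; hence $(X,\varepsilon^{-1}d_g,x)\to(\mathbb{R}^j\times C(Z_j),o)$.

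The main obstacle I expect is precisely this last passage, from $C^\infty$-convergence of the metric tensors on the regular locus to genuine pointed Gromov--Hausdorff convergence of the length spaces: one must ensure that distances are unaffected both by the (codimension $\ge2$) singular set and by curves wandering out of the chart. This is where Lemma \ref{PropLipCurve} and the local triangle-inequality argument above are essential, together with the scale invariance of the estimate \eqref{e-def-ite-met} defining an iterated edge metric, which is what makes the $\Lambda r^\alpha$ error harmless after blow-up.
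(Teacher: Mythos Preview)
Your proposal is correct and follows essentially the same approach as the paper: the paper itself gives only a one-line sketch (``Rescaling the distance by a factor $1/\varepsilon$ around a point $x$ amounts to rescale the iterated edge metric by a factor $1/\varepsilon^2$. By definition of this metric around a singular point, it is then not difficult to show existence and uniqueness of the tangent cone\ldots'') and defers the details to \cite{ACM12}, whereas you have spelled out precisely those details---the blow-up $\Phi_\varepsilon$, the $\Lambda(\varepsilon r')^\alpha$ decay of the error, and the passage from tensor convergence on the regular set to pointed Gromov--Hausdorff convergence via Lemma~\ref{PropLipCurve} and the triangle-inequality localization of minimizers. Your identification of the last step as the delicate point is apt, and your handling of it is adequate for the level of rigor the paper adopts.
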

%\begin{definition}
%Let X be a stratified space of dimension $n$, endowed with an iterated edge metric $g$, and consider a point $x \in X$. The tangent cone at $x$ is the unique Gromov-Hausdorff limit of the pointed metric spaces $(X, \varepsilon^{-2}g,x)$ as $\varepsilon$ tends to zero. 
%\end{definition}
\begin{remark}
By definition of an iterated edge metric, the tangent cone at a regular point is isometric to Euclidean space $\R^n$.

For $x\in \Sigma^j$, a change of variables proves that the tangent cone at a singular point is isometric to
$$(C(S_x), dr^2+r^2h_x)$$
where $S_x$ is the $(j-1)$-fold spherical suspension of the link $Z_j$: 
$$\left( \left[0, \frac{\pi}{2} \right]\times \s^{j-1} \times Z_j, d\varphi^2+\cos^2\varphi g_{\s^{j-1}}+\sin^2\varphi k_j\right).$$
We refer to $S_x$ as the \emph{tangent sphere} at $x$.  Note that $S_x$ is a compact stratified space of dimension $(n-1)$.

Note also that \ref{e-def-ite-met} implies

\begin{equation}\label{e-control-ite}
 |\varphi_x^*g-g_{0,j}|\leq \Lambda r^{\alpha}, \mbox{ on } B_{0,j} (x,r) \cap X^{\tiny{reg}} \subset \mathbb{B}^j(r)\times C_{(0,r)}(Z_j^{\tiny{reg}})
\end{equation}
the open ball of radius $r$ w.r.t. to the model metric $g_{0,j}$ centered at $x$. Note that this ball is homeomorphic to $C_{[0,r)} (S_x)$.
\end{remark}

\noindent Rescaling the distance by a factor $1/\varepsilon$ around a point $x$ amounts to rescale the iterated edge metric by a factor $1/\varepsilon^2$. By definition of this metric around a singular point, it is then  not difficult to show existence and uniqueness of the tangent cone at any such point $x$ of a stratified space, see for example Section 2.1 in \cite{ACM12} for more details about the above properties.
%{\color{red} WHERE PRECISELY?}.% Besides, when $x$ belongs to the regular set, the tangent cone is simply $\R^n$;

\vspace{0.3cm}

\noindent The tangent sphere allows us to give a different and  useful description of geodesic balls around a point; the idea is that a geodesic ball around a singular point is ``not far'' from being a cone over its tangent sphere. We refer to Section 2.2 of \cite{ACM14} for proofs of the following properties.

Namely, for any $x \in \Sigma$, there exists a sufficiently small radius $\varepsilon_x$, a positive constant $\kappa$ and an open set $\Omega_x$ satisfying: 
\begin{itemize}
\item the geodesic ball $B(x,\varepsilon_x)$ is contained in $\Omega_x$, 
\item $\Omega_x$ is homeomorphic to the truncated cone $C_{[0,\kappa \varepsilon_x)}(S_x)$, and the homeomorphism $\psi_x$ sends the regular part of $\Omega_x$ to the regular set of $C_{[0,\kappa \varepsilon_x)}(S_x)$;
\item on the regular part of the ball $B(x,\varepsilon_x) \cap X^{\tiny{reg}}$ we control the difference between $g$ and the exact cone metric $g_C=dr^2+r^2h_x$: 
\begin{equation}
\label{g_C}
|\psi_x^*g-(dr^2+r^2h_x)|\leq \Lambda\varepsilon_x^{\alpha},
\end{equation}
where $\Lambda$ is a positive constant, and $\alpha$ is the same exponent appearing in the definition of the iterated edge metric $g$. 
\end{itemize} 
 This implies a similar estimate for the distance functions. More precisely, on the cone over $(S_x,h_x)$, let us  consider the exact cone distance: 
$$\distArg{C}((t,y),(s,z))=\sqrt{t^2+s^2-2st \cos (d_{h_x}(y,z) \wedge \pi}),$$
where $(t,y)$, $(s,z)$ belong to $C(S_x)$ and $a\wedge \pi$ is the minimum between $a$ and $\pi$.

Now, given $x \in \Sigma$ and a radius $0< \varepsilon < \varepsilon_x$, we have, for any point $y$ in $B(x,\varepsilon)$ with coordinates $(r,z)$ in $C_{[0,\kappa \varepsilon_x)}(S_x)$, the estimate: 
\begin{equation}
\label{d_C}
|d_g(x,y) - \distArg{C}(0, (r,z))| \leq \Lambda \varepsilon^{\alpha+1}.
\end{equation}
In other terms, the distance function from a point $x$ in the singular set is not far  from being the exact cone distance in $C(S_x)$ in small geodesic balls centred at $x$.
However, it is still quite difficult to say anything about the local behaviour of geodesic at singularities.

Observe that, thanks to the compactness of the stratified space, we can choose a uniform $\eps_0$ such that for any $x \in X$ the ball $B(x,\eps_0)$ satisfies the previous properties.

%\vspace{0.3cm}

\subsubsection{Measure on a stratified space}
\label{volume-measure}
We end this part with the definition of the {\it volume measure} on $X$. We then show that the volume shares properties with the standard Riemannian volume of a smooth Riemannian manifold. % refered to as $\vol_g$ in what follows. 

\begin{definition}[Volume measure]

The volume measure $v_g$ of a compact stratified space endowed with an iterated edge metric $g$, is the Riemannian measure on $X^{reg}$ induced by the restriction of $g$ to this set. It is denoted as $v_g$ while the volume of a measurable set $A$ is denoted by $\vol_g(A)$.

\end{definition}

Note that the singular set has measure zero: $\vol_g(\Sigma)=0$. 

We start by observing a local property for the volume measure. Consider a point $x \in X$ and the radius $\eps_0>0$ defined as above, so that the geodesic ball $B(x,\eps_0)$ is contained in an open set homeomorphic to a truncated cone over the tangent sphere $S_x$. Denote by $\vol_C$ the volume measure associated to the cone metric $g_C$. Thanks to $\ref{g_C}$, for any regular point $y \in C_{(0,\kappa\eps_0)}(S_x^{\tiny{reg}})$ and for any vector $v$ such that $g_C(v,v)=1$ we obtain: 
$$(1-\Lambda \eps_0^{\alpha}) \leq \psi_x^*g(v,v) \leq (1+\Lambda \eps_0^{\alpha}).$$
By choosing an orthonormal basis for the cone metric $g_C$ which is orthogonal for $\psi^*_xg$, the previous implies the following inequality for the volume forms: 
\begin{equation*}
(1-\Lambda \eps_0^{\alpha})^{\frac{n}{2}}dv_C\leq dv_{\psi_x^*g} \leq (1+\Lambda \eps_0^{\alpha})^{\frac{n}{2}} dv_C. 
\end{equation*}
As a consequence, and thanks to the fact that the singular sets have null measure, for any measurable set $\mathcal{U}$ in $B(x,\eps_0)$ we have: 
\begin{equation}
\label{vol}
(1-\Lambda \eps_0^{\alpha})^{\frac{n}{2}}\vol_C(\psi_x^{-1}(\mathcal{U}))\leq \vol_g(\mathcal{U}) \leq (1+\Lambda \eps_0^{\alpha})^{\frac{n}{2}}\vol_C(\psi_x^{-1}(\mathcal{U})).
\end{equation}

The volume measure on a geodesic ball is close to the volume measure of a cone metric. This local property allows us to deduce that the volume measure of a compact stratified space is finite, $n$-Ahlfors regular and doubling.

\begin{lemma}
The volume measure $v_g$ of a compact stratified space is finite
\end{lemma}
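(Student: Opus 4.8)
The plan is to prove finiteness of $v_g$ by a covering argument, reducing the global statement to the local volume estimate \eqref{vol} that has just been established. Since $X$ is compact and the radius $\eps_0>0$ can be chosen uniformly (as noted at the end of Subsection \ref{subsubsec:tangentcones}), I would first extract from the open cover $\{B(x,\eps_0)\}_{x\in X}$ a finite subcover $B(x_1,\eps_0),\dots,B(x_m,\eps_0)$. By subadditivity of the measure, $\vol_g(X)\le \sum_{i=1}^m \vol_g(B(x_i,\eps_0))$, so it suffices to bound the volume of a single such ball.

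For each $i$, the ball $B(x_i,\eps_0)$ sits inside an open set $\Omega_{x_i}$ homeomorphic, via $\psi_{x_i}$, to the truncated cone $C_{[0,\kappa\eps_0)}(S_{x_i})$, and \eqref{vol} gives
\[
\vol_g(B(x_i,\eps_0)) \le (1+\Lambda\eps_0^{\alpha})^{n/2}\,\vol_C\bigl(\psi_{x_i}^{-1}(B(x_i,\eps_0))\bigr) \le (1+\Lambda\eps_0^{\alpha})^{n/2}\,\vol_C\bigl(C_{[0,\kappa\eps_0)}(S_{x_i})\bigr).
\]
So the problem reduces to checking that a truncated cone $C_{[0,\kappa\eps_0)}(S_{x_i})$ over the tangent sphere has finite volume with respect to the exact cone metric $g_C = dr^2 + r^2 h_{x_i}$. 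Using the warped-product form of $g_C$, one has $dv_C = r^{n-1}\,dr\,dv_{h_{x_i}}$, hence
\[
\vol_C\bigl(C_{[0,\kappa\eps_0)}(S_{x_i})\bigr) = \left(\int_0^{\kappa\eps_0} r^{n-1}\,dr\right)\vol_{h_{x_i}}(S_{x_i}) = \frac{(\kappa\eps_0)^n}{n}\,\vol_{h_{x_i}}(S_{x_i}),
\]
which is finite provided the tangent sphere $S_{x_i}$ has finite $(n-1)$-dimensional volume.

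The one genuine point requiring care — and the main obstacle — is therefore establishing that every tangent sphere $S_x$, which is itself an $(n-1)$-dimensional compact stratified space carrying the iterated edge metric $h_x$, has finite volume. This is exactly the inductive instance of the statement being proved: in dimension one a compact stratified space is a closed smooth curve, which trivially has finite length, and the model-metric / local-cone description of $S_x$ (its explicit iterated spherical-suspension structure over the links $Z_j$, as recalled in the Remark after the tangent cone lemma) shows that $S_x$ is a compact stratified space of strictly smaller dimension equipped with an iterated edge metric. Thus the whole argument can be phrased as an induction on $n$: assuming finiteness of the volume for all compact stratified spaces of dimension at most $n-1$ equipped with an iterated edge metric, the covering argument above together with the warped-product volume computation yields finiteness in dimension $n$. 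Alternatively, one may avoid formal induction by covering $X$ by finitely many coordinate charts $\varphi_x$ and using \eqref{e-def-ite-met} to compare $v_g$ locally with the model measure $dv_{g_{0,j}}$, then iterating the conical integration down through the strata; either way, the content is the same finite computation, and no estimate beyond \eqref{vol} and the compactness of $X$ is needed.
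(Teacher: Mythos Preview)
Your proposal is correct and follows essentially the same argument as the paper: induction on the dimension, a finite cover of $X$ by balls $B(x_i,\eps_0)$, the estimate \eqref{vol} to bound each $\vol_g(B(x_i,\eps_0))$ by the cone volume over $S_{x_i}$, and the inductive hypothesis to conclude that $\vol_{h_{x_i}}(S_{x_i})<\infty$. Your version is slightly more explicit (the warped-product computation of $\vol_C$), but there is no substantive difference.
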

\begin{proof}
The proof is by induction on the dimension; it is clearly true for $n=1$. Assume that any stratified space of dimension $k \leq (n-1)$ is finite. We can cover $X^n$ by finitely many geodesic balls $B(x_i,\eps_0)$ for a uniform $\eps_0$ such that $\ref{vol}$ holds on  $B(x_i,\eps_0)$. In particular, the volume of $B(x_i,\eps_0)$ with respect to $g$ is smaller than the volume of the truncated cone on the tangent sphere $S_{x_i}$ with respect to the cone metric $g_C$. If $x_i$ belongs to the regular set, $S_{x_i}$ is a sphere of dimension $(n-1)$ and $g_C$ is the round metric on an $n$-dimensional Euclidean ball. As a consequence, $\vol_g(B(x_i,\eps_0))$ is finite. If $x_i$ is a singular point, then $S_{x_i}$ is a stratified space of dimension $(n-1)$, which has finite volume by the induction hypothesis. Therefore the truncated cone over $S_{x_i}$ has finite volume with respect to $g_C$, and again $\vol_g(B(x_i,\eps_0))$ is finite. We can then cover $X^n$ by a finite number of balls of finite volume, thus $\vol_g(X)$ is finite. 
\end{proof}

%The compactness and the definition of an iterated edge metric ensure that the measure of a compact stratified $(X^n,d_g,v_g)$ space is finite. This can be seen by induction on the dimension; it is clearly true for $n=1$. Assume that the volume of any compact stratified space of dimension $k \leq (n-1)$ is finite. We can cover $X^n$ by finitely many geodesic balls $B(x_i,r)$; the radius $r$ can be chosen small enough in such a way that: 
%\begin{itemize}
%\item[•] if the center $x_i$ belongs to the regular set of $X$, the ball $B(x_i,r)$ is included in the regular set as well and its volume is finite; 
%\item[•] If $x_i$ belongs to a singular stratum $\Sigma^j$, then the ball $B(x_i,r)$ can be seen, through a local chart $\varphi_{x_i}$, as the product of a ball in $\R^j$ and a truncated cone over the link $Z_j$, which is a compact stratified space of dimension strictly smaller than $n$. Therefore, this product has finite volume with respect to the model metric $g_{0,j}$. Thanks to the estimate \ref{e-def-ite-met}, the volume is finite with respect to $g$ as well (see the proof of Proposition \ref{doubling} for more details). 
%\end{itemize}Thus we can cover $X^n$ by finitely many balls of finite volume, and $\vol_g(X)$ is finite.  
%

\begin{proposition}\label{doubling}

The measure $v_g$ is $n$-Ahlfors regular: there exists a positive constant $C$ such that for any $x \in X$ and for any $0 <r < \mbox{diam}(X)/2$, the measure of the ball $B(x,r)$ is bounded as follows:
$$ C^{-1}r^n \leq \vol_g(B(x,r))\leq C r^n.$$
As a consequence, the measure $v_g$ is doubling: there exists a constant $C_1$ such that for any $x \in X$ and for any $ 0 < r < \mbox{diam}(X)/2$ 
$$\vol_g(B(x,2r)) \leq C_1 \vol_g (B(x,r)).$$
\end{proposition}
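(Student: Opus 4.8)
The plan is to prove the two-sided $n$-Ahlfors regularity bound first, then deduce the doubling property as an immediate consequence. The doubling inequality follows from Ahlfors regularity by the chain
\[
\vol_g(B(x,2r)) \leq C(2r)^n = 2^n C r^n \leq 2^n C^2 \vol_g(B(x,r)),
\]
so that $C_1 = 2^n C^2$ works; this needs $2r < \diam(X)$, which is why the statement restricts to $r < \diam(X)/2$ (for larger $r$ the ball is all of $X$ and the estimate is trivial after adjusting constants). So the real content is the Ahlfors bound.

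For the Ahlfors regularity, I would argue by induction on the dimension $n$, in parallel with the proof that $v_g$ is finite. The base case $n=1$ is clear since $X$ is then a smooth compact one-manifold. For the inductive step, I would split into two regimes. For \emph{small} radii $r < \eps_0$ (with $\eps_0$ the uniform radius from Section \ref{subsubsec:tangentcones}), I would use the comparison \eqref{vol}: for any $x$, the ball $B(x,r)$ sits inside the model truncated cone $C_{[0,\kappa\eps_0)}(S_x)$ with volume distortion controlled by $(1\pm\Lambda\eps_0^\alpha)^{n/2}$, so it suffices to establish the two-sided bound $c^{-1}r^n \leq \vol_C(B_C(0,r)) \leq c\, r^n$ on an exact metric cone $C(S_x)$ over the tangent sphere. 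On a cone $(C(S),dr^2+r^2 h)$ one computes directly
\[
\vol_C(B_C(0,r)) = \frac{r^n}{n}\,\vol_{h}(S),
\]
so the constant is governed by $\vol_h(S_x)$, which is finite by the induction hypothesis (the tangent sphere $S_x$ is an $(n-1)$-dimensional stratified space) and bounded below away from zero; by compactness of $X$ one gets uniform upper and lower bounds on $\vol_{h}(S_x)$ as $x$ ranges over $X$. One must also check that for small $r$ the geodesic ball $B(x,r)$ \emph{contains} a definite cone ball, not only is contained in one, which follows from the distance estimate \eqref{d_C} comparing $d_g(x,\cdot)$ to the cone distance with error $O(\eps^{\alpha+1})$. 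For \emph{large} radii $\eps_0 \leq r < \diam(X)/2$, the upper bound is trivial since $\vol_g(B(x,r)) \leq \vol_g(X) < \infty$ by the previous lemma, and $\vol_g(X) \leq C r^n$ with $C = \vol_g(X)/\eps_0^n$; for the lower bound, $B(x,r) \supset B(x,\eps_0)$ and $\vol_g(B(x,\eps_0)) \geq c^{-1}\eps_0^n \geq c^{-1}(\eps_0/\diam X)^n r^n$ by the small-radius case.

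The main obstacle I anticipate is making the constants in the small-radius regime genuinely \emph{uniform} in $x \in X$: the volume of the tangent sphere $S_x$, the distortion exponent, the radius $\kappa\eps_x$ of the cone chart, and the error constant $\Lambda$ all a priori depend on the point and the stratum it belongs to. The remark at the end of Section \ref{subsubsec:tangentcones} already asserts a uniform $\eps_0$, so the geometric setup is uniform; what remains is to observe that $x \mapsto \vol_{h_x}(S_x)$ is bounded above and below on the compact space $X$ — above by a covering/induction argument as in the finiteness lemma, below because each $S_x$ contains a regular point near which it looks Euclidean, giving a uniform-in-$x$ lower bound via another compactness argument on the (finitely many) strata. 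Assembling these uniform bounds, together with the two radius regimes, yields the stated $n$-Ahlfors regularity, and the doubling property follows.
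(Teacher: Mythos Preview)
Your proposal is correct and follows essentially the same core strategy as the paper: use the distance comparison \eqref{d_C} to sandwich $B(x,r)$ between truncated cones $C_{[0,(1\pm\delta)r)}(S_x)$, then use the volume comparison \eqref{vol} and the fact that $\vol_{h_x}(S_x)<\infty$ to get the two-sided $r^n$ bound; doubling then follows formally.

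The one noteworthy difference is in how uniformity in $x$ is handled. You attempt to prove directly that $x\mapsto \vol_{h_x}(S_x)$ is uniformly bounded above and below on $X$, and you correctly anticipate this as the delicate point (the tangent sphere jumps discontinuously across strata). The paper sidesteps this entirely: it only proves the \emph{pointwise} estimate $C(x)^{-1}r^n\le \vol_g(B(x,r))\le C(x)r^n$ for $0<r<R(x)$ with constants depending on $x$, and then invokes compactness of $X$ together with $\vol_g(X)<\infty$ to upgrade to a uniform constant and extend to all $r<\diam(X)/2$. This is more economical, since it avoids any discussion of how $\vol_{h_x}(S_x)$ varies with $x$; your direct approach works too but requires the extra argument you sketch in your final paragraph.
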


\begin{proof}
The second property is an immediate consequence of the Ahlfors regularity. Using the compactness of $X$, it suffices to prove, for all $x \in X$, the bounds
\begin{equation}\label{LocalAhlfors}
 C(x)^{-1}r^n \leq \vol_g(B(x,r))\leq C(x) r^n
 \end{equation}
for all $0<r<R(x)$ where $R(x), C(x)>0$ may depend on $x$.
Indeed, the compactness of $X$ and the property $\vol_g(X)<+ \infty $ allow us to remove the dependance in $x$ from the constant $C(x)$ and to replace $R(x)$ by $\mbox{diam}(X)$.

In order to prove \ref{LocalAhlfors} we only have to consider the case of a singular point $x \in \Sigma$. Fix $\eps_0$ as defined above and consider a geodesic ball $B(x,\eps)$, for some $\eps < \eps_0$, on which the three estimates $\ref{g_C}$, $\ref{d_C}$ and $\ref{vol}$ hold. Denote by $\mathcal{U}$ the image of $B(x,\eps)$ in the truncated cone $C_{[0,\kappa \eps)}(S_x)$ by the homeomorphism $\psi_x$. Thanks to the estimate on the distance $\ref{d_C}$, $\mathcal{U}$ must be contained in a geodesic ball with respect to the metric $g_C$ centered at the tip of the cone, whose radius is not far from $\eps$. More precisely, for $\delta= \Lambda \eps_0^{\alpha}$, $\mathcal{U}$ satisfies:
$$C_{[0,(1-\delta)\eps)}(S_x) \subset \mathcal{U} \subset C_{[0,(1+\delta)\eps)}(S_x).$$
Indeed, geodesic balls centered at the tip of the cone are truncated cone as well. 
Thanks to the estimate on the volume measure $\ref{vol}$, we obtain for any $0 < \eps \leq \eps_0$: 
$$\vol_{h_x}(S_x)(1-\delta)^n\eps^n \leq \vol_g(B(x,\eps)) \leq \vol_{h_x}(S_x)(1+\delta)^n\eps^{n}.$$
This last inequality allows us to conclude, since the volume of $S_x$ is finite.  
\end{proof}

\subsection{Analysis on stratified spaces}

We define the Sobolev space $W^{1,2}(X)$ on a compact stratified space $(X,d_g,v_g)$ as the completion of Lipschitz functions on $X$ with respect to the usual norm of $W^{1,2}$. More precisely, for a Lipschitz function $u$, the gradient $\nabla u$ is defined almost everywhere on $X$, and therefore its norm in $W^{1,2}(X)$ is given by: 
$$||u||^2_{1,2}=\int_X (u^2+|\nabla u|^2)dv_g.$$
It is possible to show that Lipschitz functions with compact support on the regular set $\mbox{Lip}_0(X^{\tiny{reg}})$, as well as $C^{\infty}_0(X^{\tiny{reg}})$ smooth functions with compact support on $X^{\tiny{reg}}$, are dense in $W^{1,2}(X)$ (see Chapter 1 of \cite{TM} for a standard proof). The Sobolev space $W^{1,2}(X)$ is clearly an Hilbert space. Moreover the usual Sobolev embeddings holding for compact smooth manifolds, also hold in the setting of compact stratified spaces, as proven in \cite{ACM12}.
 
\noindent We define the Dirichlet energy as
$$ \mathcal{E}(u)=\int_X |\nabla u|^2 dv_g, \mbox{  for } u \in C^{\infty}_0(X^{\tiny{reg}}).$$
Thanks to the density of $C^{\infty}_0(X^{\tiny{reg}})$, we can then extend $\mathcal{E}$ to the whole $W^{1,2}(X)$. The Laplacian $\Delta_g$ associated to $g$ is then the positive self-adjoint operator obtained as the Friedrichs extension of the operator generating the quadratic form $\mathcal{E}$. The integration by parts formula holds 
$$ \int_X v \Delta_g u dv_g= \int_X (\nabla u, \nabla v)_g dv_g.$$ 
It is proven in \cite{ACM12} that the spectrum of the Laplacian is discrete and nondecreasing: 
$$0= \lambda_0< \lambda_1 \leq \lambda_2 \leq \ldots \leq \lambda_n \rightarrow +\infty.$$
Moreover, in \cite{ACM14} the authors showed a local $(2,2)$-Poincaré inequality: there exists constants $a>1$, $C>0$ and $\rho_0 >0$ such that for any $x \in X$ and for any $\rho < \rho_0$, if $u \in W^{1,2}(B(x,a\rho))$ then
$$\int_{B(x,\rho)}|u-u_{B(x,\rho)}|^2 dv_g \leq C \int_{B(x,a\rho)}|\nabla u|^2dv_g,$$
where $\displaystyle u_{B(x,\rho)}=\vol_g(B(x,\rho))^{-1}\int_{B(x,\rho)}u \,dv_g$ is the average of $u$ over $B(x,\rho)$.

%\noindent We finally define $W^{1, \infty}(X)$ as the set of functions on $X$ which are locally Lipschitz on the regular set $\Omega$ with uniform Lipschitz constant, that is: there exists  a constant $K >0$ such that for any $U$ compactly contained in $\Omega$ and for any $x,y \in U$ we have $|u(x)-u(y)|\leq Kd_g(x,y)$. We have the following: 

In the following we will need the so-called ``Sobolev-to-Lipschitz property'' which, roughly speaking, means that a Sobolev function with bounded gradient admits a Lipschitz representative (see Definition 2.3). This holds on compact stratified spaces thanks to Lemma \ref{PropLipCurve}. The precise statement is 

\begin{lemma} 
\label{LipRep}
Let $u \in W^{1,2}(X)$ a function with bounded gradient, $\nabla u \in L^2(X) \cap L^{\infty}(X)$. Then $u$ has a Lipschitz representative. 
\end{lemma}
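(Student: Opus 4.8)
The plan is to show that if $u\in W^{1,2}(X)$ has $|\nabla u|\le L$ almost everywhere, then the representative of $u$ obtained by averaging on small balls (equivalently, the precise representative given by the Lebesgue differentiation theorem, which is available here since $v_g$ is doubling and supports a $(2,2)$-Poincar\'e inequality) is $L$-Lipschitz for $d_g$. The key point that makes this work on a stratified space, rather than on a general metric measure space, is Lemma~\ref{PropLipCurve}: the distance $d_g$ between any two points can be computed (up to $\varepsilon$) along admissible curves whose interior lies entirely in the \emph{smooth} regular set $X^{reg}$, where $u$ is an ordinary Sobolev function and the fundamental theorem of calculus is at our disposal.

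First I would fix a smooth mollification scheme on $X^{reg}$ and record the elementary fact that for a smooth function $v$ on an open subset of a Riemannian manifold with $|\nabla v|\le L$, one has $|v(p)-v(q)|\le L\,L_g(\sigma)$ for every $C^1$ curve $\sigma$ joining $p$ to $q$ inside that open set; integrating along the curve and then taking the infimum over curves gives the bound in terms of the intrinsic distance. The mild technical content is passing from "smooth $v$" to "$u\in W^{1,2}$ with bounded gradient": I would either approximate $u$ in $W^{1,2}_{loc}(X^{reg})$ by smooth functions whose gradients are controlled (using a standard mollification on the manifold $X^{reg}$, noting the bound $|\nabla u|\le L$ is preserved in the limit by lower semicontinuity / the convexity of the $L^\infty$ constraint), or invoke directly that a $W^{1,2}_{loc}$ function on a Riemannian manifold with $|\nabla u|\le L$ a.e. is locally $L$-Lipschitz along a.e. line segment and hence (after choosing the precise representative) genuinely $L$-Lipschitz on any convex chart. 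Either way, one obtains: for every admissible curve $\gamma:[0,1]\to X$ whose interior lies in $X^{reg}$, the precise representative $\tilde u$ satisfies $|\tilde u(\gamma(0))-\tilde u(\gamma(1))|\le L\,L_g(\gamma)$, with the values at the endpoints interpreted via the limit of ball-averages (continuity of $\tilde u$ up to $\Sigma$ follows because $|\nabla u|\le L$ forces the oscillation of $u$ on $B(x,r)$ to be $O(r)$, via the Poincar\'e inequality and doubling of Proposition~\ref{doubling}).

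Then I would conclude as follows. Given $x,y\in X$ and $\varepsilon>0$, pick an admissible curve $\gamma$ from $x$ to $y$ with $L_g(\gamma)\le d_g(x,y)+\varepsilon$; apply Lemma~\ref{PropLipCurve} to replace it by an admissible $\gamma_\varepsilon$ with the same endpoints, interior in $X^{reg}$, and $L_g(\gamma_\varepsilon)\le d_g(x,y)+2\varepsilon$. The previous paragraph gives $|\tilde u(x)-\tilde u(y)|\le L\,L_g(\gamma_\varepsilon)\le L\,(d_g(x,y)+2\varepsilon)$, and letting $\varepsilon\to 0$ yields $|\tilde u(x)-\tilde u(y)|\le L\,d_g(x,y)$. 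Hence $\tilde u$ is an $L$-Lipschitz representative of $u$.

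The main obstacle is the regularity bookkeeping near the singular set: one must be sure the "precise representative" defined by ball-averages is well defined and continuous at points of $\Sigma$, and that its endpoint values are correctly captured when one runs an approximating curve whose interior avoids $\Sigma$. This is handled by the oscillation estimate $\operatorname{osc}_{B(x,r)}u = O(L r)$ coming from the local $(2,2)$-Poincar\'e inequality together with $n$-Ahlfors regularity (Proposition~\ref{doubling}); everything else is a routine use of mollification on the manifold $X^{reg}$ and of Lemma~\ref{PropLipCurve}.
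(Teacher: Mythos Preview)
Your proposal is correct and follows essentially the same route as the paper: use Lemma~\ref{PropLipCurve} to find admissible curves lying in $X^{reg}$, integrate the bounded gradient along them via the fundamental theorem of calculus, and pass to the limit in $\varepsilon$. The only difference is in how the singular set is handled: the paper first proves $u$ is $L$-Lipschitz on the dense open set $X^{reg}$ (covering the image of $\gamma_\varepsilon$ by finitely many convex balls and applying the classical argument there), and then simply extends to all of $X$ by uniform continuity and density; you instead define the precise representative everywhere via Lebesgue differentiation and control the oscillation at points of $\Sigma$ using the local Poincar\'e inequality and Ahlfors regularity. Both work, but the paper's version avoids the extra machinery---once you know $u$ is Lipschitz on $X^{reg}$, the extension is automatic and you do not need to invoke Poincar\'e or doubling at all.
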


\begin{proof}
When we restrict $u$ to $X^{\tiny{reg}}$, $u$ has a bounded gradient defined almost everywhere. Consider two points $x\neq y$ in $X^{\tiny{reg}}$. Thanks to Lemma \ref{PropLipCurve}, for any $\eps >0$ there exists an admissible curve $\gamma_{\eps}: [0,1] \rightarrow X^{\tiny{reg}}$ which connects $x$ and $y$ such that

 $$ L_g (\gamma_{\eps}) \leq (1+\eps)d_g(x,y).$$
 
 Therefore, since $\gamma_{\eps}([0,1])$ is compact, it can be covered by finitely many geodesically convex balls diffeomorphic to open Euclidean subsets. On such a ball, $u$ is locally Lipschitz with Lipchitz constant at most $\| \nabla u\|_{\infty}$. We infer from the fundamental theorem of calculus applied to $u \circ \gamma $: 
$$|u(x)-u(y)| \leq ||\nabla u ||_{\infty}(1+\eps) d_g(x,y).$$

By letting $\eps$ go to zero, we obtain that $u$ is a Lipschitz continuous function on $X^{\tiny{reg}}$ with Lipschitz constant smaller than or equal to $K=||\nabla u||_{\infty}$. This implies $u$ is uniformly continuous on $X^{\tiny{reg}}$; since the regular set is dense in $X$, $u$ admits a unique Lipschitz continuous extension $\bar{u}$ defined on the whole $X$, with $u = \bar{u}$ almost everywhere and the same Lipschitz constant.

\end{proof}

\begin{remark}
\label{twodistances}
We can consider another possible metric structure on the stratified space, by defining the distance associated to the Dirichlet energy as follows: 
$$d_{\mathcal{E}}(x,y)=\sup\{|f(x)-f(y)|; f \in W^{1,2}(X), ||\nabla f||_{\infty} \leq 1 \}.$$
Thanks to the previous result, we know that Sobolev functions with bounded gradient are Lipschitz functions, and $||\nabla f ||_{\infty}\leq 1$ implies they have Lipschitz constant at most one. As a consequence,
for any $f$ as in the definition and for any $x,y \in X$ we have $|f(x)-f(y)|\leq d_g(x,y)$, which implies $d_{\mathcal{E}}(x,y) \leq d_g(x,y)$. Moreover, note that for any $x$ fixed, $f(y)=d_{g}(x,y)$ is clearly a Lipschitz function of Lipschitz constant one, belonging to $W^{1,2}(X)$. Therefore for any $x,y \in X$ we have $d_{\mathcal{E}}(x,y) \geq d_g(x,y)$. Then the two distances coincides.
\end{remark}

\subsubsection{Ultracontractivity} In the proof of Theorem \ref{MainThm} we will  use some properties of the heat semi-group that we recall here. Let $X$ be a compact stratified space, and let $g$ be an iterated edge metric. By definition, $\mathcal{E}$ is also a \textit{strongly regular Dirichlet form}.
Let $(P_t)_{t>0}$ denote the associated heat semi-group. 
$\mathcal{E}$ is strongly local, admits
a local $(2,2)$-Poincaré inequality, and the measure $v_g$ satisfies a doubling property. Moreover, closed balls w.r.t. $d_{\mathcal{E}}$ are compact.
Therefore, we can apply results from \cite{sturmdirichlet1, sturmdirichlet2, sturmdirichlet3} and, in particular, the following lemma holds.
\begin{lemma}
\label{ultracontractive}
$P_t$ is $L^1\rightarrow L^{\infty}$-ultracontractive. More precisely, for every $t\in (0,\infty)$ there exists a constant $C(t)>0$ such that
$
\left\|P_t\right\|_{L^1(\vol_g)\rightarrow L^{\infty}(\vol_g)}\leq C(t).
$
\end{lemma}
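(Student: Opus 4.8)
The plan is to derive ultracontractivity from the heat-kernel upper bound that follows automatically once we know that $\mathcal{E}$ is a strongly local, strongly regular Dirichlet form whose underlying metric measure space $(X,d_\mathcal{E},v_g)=(X,d_g,v_g)$ is doubling and supports a scaled $L^2$-Poincar\'e inequality, with closed balls compact. All of these hypotheses have been verified in the preceding subsections: $\mathcal{E}$ is strongly local by construction, $v_g$ is doubling by Proposition \ref{doubling}, the local $(2,2)$-Poincar\'e inequality is quoted from \cite{ACM14}, $d_\mathcal{E}=d_g$ by Remark \ref{twodistances}, and $(X,d_g)$ is a compact (hence proper) geodesic space so closed balls are compact. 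The key input is then the theory of Sturm \cite{sturmdirichlet1, sturmdirichlet2, sturmdirichlet3}: under exactly these assumptions the Dirichlet form is \emph{quasi-regular}, the heat semigroup $(P_t)_{t>0}$ has an integral kernel $p_t(x,y)$, and $p_t$ satisfies a Gaussian-type upper bound
\begin{equation*}
p_t(x,y)\leq \frac{C}{\vol_g\bigl(B(x,\sqrt t)\bigr)}\exp\Bigl(-\frac{d_g(x,y)^2}{Ct}\Bigr)
\end{equation*}
for all $t$ in a bounded range (and a global one once one also uses that $X$ has finite diameter), possibly with an extra exponential factor $e^{Ct}$ which is harmless on bounded time intervals.

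From this kernel bound the ultracontractivity estimate is a short functional-analytic step. First I would note that $n$-Ahlfors regularity from Proposition \ref{doubling} gives $\vol_g(B(x,\sqrt t))\geq c\,t^{n/2}$ for $t\leq \diam(X)^2$, so the kernel bound yields $p_t(x,y)\leq C(t)$ with $C(t)\sim c^{-1}t^{-n/2}$ for small $t$, and $C(t)$ bounded (by $\vol_g(X)^{-1}$ up to a constant, or simply by continuity of the heat flow toward the constant function) for $t$ away from zero. Then for $f\in L^1(v_g)$ one estimates pointwise
\begin{equation*}
|P_tf(x)|=\Bigl|\int_X p_t(x,y)f(y)\,dv_g(y)\Bigr|\leq \sup_{y}p_t(x,y)\,\|f\|_{L^1(v_g)}\leq C(t)\|f\|_{L^1(v_g)},
\end{equation*}
which is precisely $\|P_t\|_{L^1(v_g)\to L^\infty(v_g)}\leq C(t)$. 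Alternatively, and perhaps cleaner for the write-up, one can bypass the pointwise kernel and use the standard equivalence (again due to Varopoulos/Coulhon, available in Sturm's papers) between the on-diagonal bound $\|P_t\|_{L^1\to L^\infty}\leq C(t)$, the Nash inequality, and the parabolic Harnack/Gaussian estimates; invoking that equivalence, it suffices to quote that $(X,d_g,v_g,\mathcal{E})$ satisfies the parabolic Harnack inequality of \cite{sturmdirichlet3}, which holds under doubling plus Poincar\'e.

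The main obstacle is not any computation but making sure the hypotheses of Sturm's theorems are met verbatim, in particular that the Dirichlet form is regular (or at least quasi-regular) and that the intrinsic metric $d_\mathcal{E}$ induced by $\mathcal{E}$ generates the correct topology and has compact balls; this is why Remark \ref{twodistances} (the identification $d_\mathcal{E}=d_g$) and the compactness of $(X,d_g)$ are cited explicitly. Once these are in place, everything else is a direct application of the cited literature, so I would keep the proof to essentially the two lines above: verify the hypotheses, invoke \cite{sturmdirichlet1,sturmdirichlet2,sturmdirichlet3} to get the Gaussian heat-kernel upper bound, and combine it with the $n$-Ahlfors regularity of Proposition \ref{doubling} and finiteness of $\vol_g(X)$ to read off the bound on $\|P_t\|_{L^1(v_g)\to L^\infty(v_g)}$ for each fixed $t\in(0,\infty)$.
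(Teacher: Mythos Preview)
Your proposal is correct and follows essentially the same route as the paper: verify the hypotheses (strong locality, doubling, local Poincar\'e, compact balls, $d_{\mathcal{E}}=d_g$), invoke Sturm's theory \cite{sturmdirichlet1,sturmdirichlet2,sturmdirichlet3} to obtain a uniform bound on the heat kernel $p_t(x,y)$, and read off the $L^1\to L^\infty$ estimate. The only cosmetic difference is that the paper passes through an intermediate $L^1\to L^p$ step (citing Grigor'yan and \cite{agsriemannian}) before reaching $L^1\to L^\infty$, whereas you go directly from the pointwise kernel bound via $|P_tf(x)|\leq\sup_y p_t(x,y)\,\|f\|_{L^1}$; your direct argument is perfectly valid and arguably cleaner.
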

\noindent
\textit{Proof of the Lemma:} The assumptions imply a uniform bound $\tilde{C}(t)>0$ on the associated heat kernel $(x,y)\mapsto p_t(x,y)$ by \cite[Theorem 0.2]{sturmdirichlet2} or by  \cite[Corollary 4.2]{sturmdirichlet3}. Then, we can first deduce 
$L^1\rightarrow L^p$-ultra-contractivity for some $p>1$ (for instance, compare with \cite[Chapter 14.1]{grigoryanheat}), 
and this property implies our claim (for instance, see again \cite[Chapter 14.1]{grigoryanheat} or \cite[Theorem 6.4]{agsriemannian}).\qed
%%%%%%%%%%%%%%%%%%%%%%%%%%%%%%%%%%%%%%%%%%%%%%%%%%%%%%%%%%%%%%%%%%%%%%%%%%%%%%%%%
%%%%%%%%%%%%%%%%%%%%%%%%%%%%%%%%%%%%%%%%%%%%%%%%%%%%%%%%%%%%%%%%%%%%%%%%%%%%%%%%%%%%
%%%%%%%%%%%%%%%%%%%%%%%%%%%%%%%%%%%%%%%%%%%%%%%%%%%%%%%%%%%%%%%%%%%%%%%%%%%%%%%%%%%%

%%%%%%%%%%%%%%%
\section{A minimal introduction to analysis on Metric Measure Spaces}

\subsection{Calculus on metric measure spaces}
In this part, we provide a minimal introduction on the analytical tools used in the theory of $RCD$-spaces.
%In this section we give a very brief introduction into analysis and calculus on metric measure spaces. 
We follow closely the approach of Ambrosio, Gigli and Savaré \cite{agslipschitz, agsheat, agsriemannian, agsbakryemery}.

Throughout this section $(X,d)$ is a complete and separable metric space, and let $\m$ be a locally finite Borel measure. The triple $(X,d,\m)$ is then called a \textit{metric measure space}.
Moreover, we assume from now on that the so-called \textit{exponential volume growth condition} holds
\begin{align}
\label{volumeGrowth}
\exists x_0\in X, \ \exists C>0: \ \int_Xe^{-C d(x,x_0)^2}d\m<\infty.
\end{align}
\subsubsection{Cheeger energy and Sobolev space}
As for a stratified space we denote by $\Lip(X)$ the set of Lipschitz functions on $(X,d)$, and for $f\in \Lip(X)$ we define the the \textit{local slope} or 
the \textit{local Lipschitz constant} $\lip(f)$ as 
\begin{align*}x\mapsto
\lip(f)(x)=\limsup_{y\rightarrow x}\frac{|f(x)-f(y)|}{d(x,y)}.
\end{align*}
$\Lip_1(X)$ denotes the set of Lipschitz functions with local slope bounded from above by $1$.
Then, the \textit{Cheeger energy} of $(X,d,\m)$ is defined via
\begin{align*}
\Ch:L^2(\m)\rightarrow[0,\infty], \ \ \Ch(f)=\frac{1}{2}\liminf_{\Lip(X)\ni f_n\overset{L^2}{\rightarrow} f} \int_X\lip(f)^2d\m.
\end{align*}
The \textit{Sobolev space} $D(\Ch)$ of $(X,d,\m)$ is given by
$$D(\Ch)=\left\{f\in L^2(\m): \Ch(f)<\infty\right\}$$
and equipped with the norm $\left\|f\right\|_{D(\Ch)}^2=\left\|f\right\|_{2}^2+2\Ch(f)$ where in this context $\left\|f\right\|_{2}$ denotes the Lebesgue $L^2$-norm w.r.t. $\m$. Note that $D(\Ch)$ with $\left\|\cdot\right\|_{D(\Ch)}$ is not a Hilbert space in general.
For instance, a Banach space $V$ that is not a Hilbert space, or more general any Finsler manifold that is not Riemannian will generate a space of Sobolev functions that is not a Hilbert space as well.
%\footnote{\color{red} I actually cannot give a good reference for that. The problem is in general that the Sobolev space on a Finsler manifold is usually defined in the classical way (and is non-Hilbert)
%but one would of course always 
%need to prove a priori that metric and smooth notion of Sobolev space coincide. This is exactly what we also do.}

\subsubsection{Minimal relaxed and weak upper gradient}
A function $g\in L^2(\m)$ is called a \textit{relaxed gradient} of $f\in L^2(\m)$ if there exists a sequence of Lipschitz functions $(f_n)_{n\in\mathbb{N}}$ such
that $(f_n)$ converges to $f$ in $L^2(\m)$, and there exists $\tilde{g}\in L^2(\m)$ such that $\lip(f_n)$ weakly converges to $\tilde{g}$ in $L^2(\m)$ and $g\geq \tilde{g}$.
We call $g$ the \textit{minimal relaxed gradient} of $f$ if it is minimal w.r.t. the norm amongst all relaxed gradients. We write $|\nabla f|_*$ for the minimal relaxed gradient of $f$. 
Any $f\in D(\Ch)$ admits a minimal relaxed gradient, and $\Ch(f)=\frac{1}{2}\int|\nabla f|_*^2 d\m$.

An alternative approach is to introduce so-called \textit{weak upper gradients} for an $L^2$-function $f$. Then one can define uniquely the so-called \textit{minimal weak upper gradient} $|\nabla f|_w$. We will omit any details about the definition. However, let us mention this notion is inspired by Cheeger's work \cite{Cheeger_dif} where he defined the notion of (minimal) generalised upper gradient $|\nabla f|_w$ . The author proved \cite[Theorem 5.1]{Cheeger_dif} that on a complete length space $(X,d,m)$ that is doubling and which supports a local $(2,2)$-Poincaré inequality, %the local Lipschitz constant of any locally Lipschitz function coincides $m$-a.e. with the minimal generalised upper gradient:  
\begin{equation}\label{1}
 |\nabla f|_w =\lip (f)
\end{equation}
holds $m$-a.e. space where $f$ is any locally Lipschitz function on $X$.

We have seen in Section \ref{sec-pre-stra} that a stratified space meets these assumptions, thus Cheeger's result applies in our setting. Moreover, by combining  \cite[Theorem 6.2]{agsheat} with earlier work by Shanmugalingam \cite{Shan} (see \cite{agsheat} for more details), it can be proved that
\begin{equation}\label{2}
|\nabla f|_{*}= | \nabla f|_w \;\;\;m-a.e.
\end{equation} 

This result holds  on any complete separable metric measure space satisfying some mild assumption on $m$ (see \cite{agsheat}). This set of spaces comprises  compact stratified spaces and thus the combination of (\ref{1}) and (\ref{2}) gives a proof of 

\begin{equation}\label{3}
|\nabla f|_{*}= \lip(f) \;\;\;m-a.e.
\end{equation} 
for any locally Lipschitz function $f$ on $X$.

 Because none of these results is elementary, we provide another short proof which applies in our particular setting in the appendix, see Proposition \ref{gradient}.

\begin{remark}
For an $L^2$-integrable Lipschitz function the local Lipschitz constant can be strictly bigger than the relaxed gradient. For instance, consider $\mathbb{R}^n$ equipped with a measure 
that is the sum of finitely many Dirac measures. Then, the Cheeger energy for any Lipschitz function on $\mathbb{R}^n$ is $0$.
\end{remark}

%%%%%%%%%%%%%%%%%%%%%%%%%%%%%%%%%%%%%%%%%%%%%%%%ù
%%%%%%%%%%%%%%%%%%%%%%%%%%%%%%%%%%%%%%%%%%%%%%%%%%
%\footnote{{\color{red}
%Do we need the definition of minimal weak upper gradient as well? I only introduced the relaxed gradient since it is slightly quicker, but if necessary I can also do more here.}}\\
\subsubsection{Infinitesimally Hilbertian metric measure spaces}
If $D(\Ch)$ is in fact a Hilbert space, we say that $(X,d,\m)$ is \textit{infinitesimally Hilbertian}. In this case we can define a \textit{pointwise inner product} between minimal relaxed gradients by
\begin{align*}
(f,g)\in D(\Ch)^2\mapsto \langle \nabla f,\nabla g\rangle := \frac{1}{4}|\nabla (f+g)|_*^2- \frac{1}{4}|\nabla(f-g)|_*^2.
\end{align*}
We say $f\in D(\Ch)$ is in the \textit{domain of the Laplace operator} if there exists $g\in L^2(\m)$ such that for every $h\in D(\Ch)$ we have 
\begin{align*}
\int\langle \nabla f,\nabla h\rangle d\m=-\int hgd\m.
\end{align*}
We say $f\in D(\Delta)$.
If $f\in D(\Delta)$, then $g\in L^2(\m)$ as above is uniquely determined, and we write $g=\Delta f$. Note that the definition of $D(\Delta)$ intrinsically sets \textit{Neumann boundary condition}.
$D(\Delta)$ is equipped with the so-called \textit{operator norm} $\left\|f\right\|_{D(\Delta)}^2=\left\|f\right\|_{2}^2+\left\|\Delta f\right\|^2_{2}$. We also define 
\begin{align*}
D_{D(\Ch)}(\Delta)=\left\{f\in D(\Delta):\Delta f\in D(\Ch)(X)\right\}
\end{align*}
and similar $D_{L^{\infty}(\m)}(\Delta)$.

\begin{remark}
We emphasize that in the context of $RCD$ spaces the sign convention for $\Delta$ differs from the one that we chose for the Laplace operator $\Delta_g$ on stratified spaces.
\end{remark}

\subsubsection{Bakry-Émery curvature-dimension condition} Another way to define curvature-dimension conditions was introduced by D.~Bakry (see for example \cite{BakryStFlour}) using the so-called $\Gamma$-\emph{calculus}, based on the Bochner inequality on manifolds with a lower Ricci bound. In the following we mainly refer to \cite{agsbakryemery}.
Let $(X,d,\m)$ be a metric measure space that is infinitesimally Hilbertian. %but does not necessarily satisfy a curvature-dimension condition. 
For $f\in D_{D(\Ch)}(\Delta)$ and $\phi\in D_{L^{\infty}}(\Delta)\cap L^{\infty}(\m)$ we define the \textit{carr\'e du champ operator} as
\begin{align*}
\Gamma_2(f;\phi)=\int \frac{1}{2}|\nabla f|_*^2\Delta \phi d\m - \int\langle\nabla f,\nabla \Delta f\rangle \phi d\m.
\end{align*}
%In addition, we define the modified carr\'e du champ operator for $f\in D_{L^2}(\Delta)$ and $\phi\in D_{L^{\infty}}(\Delta)\cap L^{\infty}(\m)\cap \Lip(X)$ as
%\begin{align*}
%\Gamma_2'(f;\phi)=\int \frac{1}{2}|\nabla f|^2\Delta\phi d\m + \int (\Delta f)^2 \phi d\m + \int \langle\nabla f,\nabla \phi\rangle \Delta f d\m.
%\end{align*}
%In the intersection of the domains of $\Gamma_2$ and $\Gamma_2'$ integration by parts shows that $\Gamma_2(f;\phi)$ and $\Gamma_2'(f;\phi)$ coincide.
\begin{definition}[Bakry-Émery condition]\label{def:BE}
We say that $(X,d,\m)$ satisfies the \textit{Bakry-Émery condition} $BE(\ke,N)$ for $\ke\in \mathbb{R}$ and $N\in (0,\infty]$ if it satisfies the weak Bochner inequality
\begin{align*}
\Gamma_2(f;\phi)\geq \frac{1}{N}\int (\Delta f)^2 \phi d\m + \ke\int |\nabla f|_*^2 \phi d\m.
\end{align*}
for any $f\in D_{D(\Ch)}(\Delta)$ and any test function $\phi\in D_{L^{\infty}}(\Delta)\cap L^{\infty}(\m), \, \phi\ge 0$.
\end{definition}

Under some mild assumptions, the Bakry-Émery condition is equivalent to several notions of metric measure space (mms) with "Ricci curvature bounded below". This subject has received a lot of attention over the last fifteen years with the introduction of several notions of curvature-dimension conditions on mms. Among them, we distinguish the $RDC(K,N)$ spaces and $RDC^*(K,N)$ spaces , \cite{giglistructure, agsriemannian,EKS}. These two notions are actually equivalent when the measure of the space is finite, as recently proved by Cavalletti and Milman \cite{CavallettiMilman}.  In order to state the theorem, we first define:

\begin{definition}\label{def:sobtolip}
We say a metric measure space satisfies the \textit{Sobolev-to-Lipschitz} property \cite{giglistructure} if
\begin{align*}
\left\{f\in D(\Ch):|\nabla f|_*\leq 1\ \m\mbox{-a.e.}\right\}\subset\Lip_1(X).
\end{align*}
\end{definition}
%More precisely, for any Sobolev function in with bounded minimal weak upper gradient there exist a Lipschitz function $\bar{f}$ that coincides $\m$-almost everywhere with $f$ such $\Lip(\bar{f})\geq |\nabla f|$.

\begin{theorem}[\cite{EKS, AMS15}]\label{th:eks}
Let $(X,d,\m)$ be a metric measure space {{satisfying \ref{volumeGrowth}}}. Then, the condition $RCD(\ke,N)$ for $\ke\in \mathbb{R}$ and $N>1$ holds if and only if $(X,d,\m)$ is infinitesimally Hilbertian, it satisfies the 
Sobolev-to-Lipschitz property and it satisfies the Bakry-Émery condition $BE(\ke,N)$.
\end{theorem}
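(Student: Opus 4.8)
The plan is to obtain this as the combination of the theorems of Erbar--Kuwada--Sturm \cite{EKS} and Ambrosio--Mondino--Savar\'e \cite{AMS15}. First I would reduce the problem to the reduced condition $\mathrm{RCD}^*(K,N)$, which agrees with $\mathrm{RCD}(K,N)$ when $\m$ is finite by Cavalletti--Milman \cite{CavallettiMilman} and is the form used in the cited references. Everything is then organised around the dynamic picture: under infinitesimal Hilbertianity the $L^2$-Wasserstein gradient flow of the relative entropy $\mathrm{Ent}_\m$ is the heat semigroup $P_t$ generated by $\Ch$, and the target is the chain of equivalences, valid for infinitesimally Hilbertian spaces satisfying \ref{volumeGrowth}: $\mathrm{RCD}^*(K,N)$ $\Longleftrightarrow$ ``$P_t$ is the $\mathrm{EVI}_{K,N}$-gradient flow of $\mathrm{Ent}_\m$'' $\Longleftrightarrow$ $\mathrm{CD}^e(K,N)$ plus Sobolev-to-Lipschitz $\Longleftrightarrow$ $\mathrm{BE}(K,N)$ plus Sobolev-to-Lipschitz. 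The forward implication of the theorem is the soft part: infinitesimal Hilbertianity is built into the definition of $\mathrm{RCD}^*$, Sobolev-to-Lipschitz is known to hold on any $\mathrm{CD}$ length space (the minimal weak upper gradient controls the oscillation of $f$ along $\m$-a.e.\ rectifiable curve, and a length space has enough almost-geodesics), and $\mathrm{BE}(K,N)$ follows by differentiating twice the $(K,N)$-convexity of $\mathrm{Ent}_\m$ along heat-regularised Wasserstein geodesics, or equivalently by reading off the short-time expansion of the Bakry--Ledoux gradient estimate that $\mathrm{CD}^e(K,N)$ provides for $P_t$.

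The work is in the converse. Starting from $\mathrm{BE}(K,N)$, infinitesimal Hilbertianity and Sobolev-to-Lipschitz, I would first carry out Bakry's self-improvement of the $\Gamma_2$-inequality in the nonsmooth framework of Savar\'e's $\Gamma$-calculus: exploiting the diffusion and chain-rule properties of the carr\'e du champ operator, one upgrades $\mathrm{BE}(K,N)$ to the sharp dimensional Bochner inequality featuring the term $\bigl|\nabla |\nabla f|_*\bigr|_*$. Integrating the associated differential inequalities along the semigroup then produces the Bakry--Qian and Bakry--Ledoux gradient estimates for $P_t$ with the correct dimensional coefficients, and Kuwada's duality lemma converts these into Wasserstein contraction estimates for the dual semigroup $P_t^*$ acting on probability measures, again with the dimensional refinement.

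The final step is to recognise $P_t^*$ as a metric gradient flow with the right convexity. Here I would combine the gradient estimates with the Sobolev-to-Lipschitz property, which yields $\Lip(P_tf)=\|\,|\nabla P_tf|_*\,\|_{L^\infty}$ and hence the correct bound on the Wasserstein metric speed of the curve $t\mapsto P_t^*\mu$; this is precisely what is needed to prove an action estimate along heat-regularised Wasserstein geodesics and to derive the evolution variational inequality $\mathrm{EVI}_{K,N}$ for the gradient flow of $\mathrm{Ent}_\m$. By \cite{EKS}, $\mathrm{EVI}_{K,N}$ implies the entropic condition $\mathrm{CD}^e(K,N)$, which in turn implies $\mathrm{CD}^*(K,N)$; together with infinitesimal Hilbertianity this is $\mathrm{RCD}^*(K,N)$, hence $\mathrm{RCD}(K,N)$.

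I expect the two genuine obstacles to be, first, the self-improvement of the Bochner inequality: it requires a careful nonsmooth $\Gamma$-calculus and the regularity $|\nabla f|_*\in D(\Ch)$ for a rich enough class of $f$, and is the step where the diffusion property of $\Ch$ is really used. Second, the Eulerian-to-Lagrangian passage in the last step --- turning pointwise gradient and Bochner estimates into the $\mathrm{EVI}_{K,N}$ inequality --- is delicate: it demands a regularisation and commutation argument along Wasserstein geodesics, and it is exactly here that the Sobolev-to-Lipschitz hypothesis cannot be dispensed with, since without it the $L^2$-level gradient estimates do not translate into genuine metric bounds.
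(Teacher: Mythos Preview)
The paper does not supply a proof of this theorem at all: it is stated purely as a citation of \cite{EKS, AMS15} and used as a black box to transfer the Bakry--\'Emery condition into the $RCD$ condition. There is therefore no ``paper's own proof'' to compare against.

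That said, your outline is a faithful high-level summary of the arguments in the cited references. The chain of equivalences you describe --- $RCD^*(K,N) \Leftrightarrow$ $EVI_{K,N}$-gradient flow $\Leftrightarrow$ entropic $CD^e(K,N)$ with Sobolev-to-Lipschitz $\Leftrightarrow$ $BE(K,N)$ with Sobolev-to-Lipschitz --- is indeed the backbone of \cite{EKS}, and the self-improvement/$\Gamma$-calculus step together with Kuwada's duality and the Eulerian-to-Lagrangian passage are the correct ingredients. Your identification of the two delicate points (nonsmooth self-improvement of Bochner, and the action estimate leading to $EVI_{K,N}$ where Sobolev-to-Lipschitz is genuinely needed) is accurate. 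One small caveat: you invoke Cavalletti--Milman to identify $RCD^*(K,N)$ with $RCD(K,N)$, which requires finite measure; the theorem as stated only assumes the exponential volume growth condition \ref{volumeGrowth}, so strictly speaking the cited references yield the equivalence with $RCD^*(K,N)$, and the paper itself later uses Cavalletti--Milman only in the compact (hence finite-measure) setting where it applies.
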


%Our strategy to prove our main result is to establish the Bakry-Émery condition and then use the above theorem to infer that the space is RCD.

\subsubsection{Cheeger versus Dirichlet energy}
%Let $X$ be a stratified space together with an iterated edge metric $g$. The induced metric measure space $(X,d_g,v_g)$ is complete and separable.
%Recall that $(X,d_g,v_g)$ also admits a local $(2,2)$-Poincar\'e inequality, and satisfies a doubling property that implies the exponential growth condition \ref{volumeGrowth}.
%If the stratified space is compact, this already follows from the fact that the measure and the diameter are finite in this case.
%Therefore $(X,d_g,v_g)$ fits into the context of this section.
%\footnote{ \color{blue}{In the compact case of stratified spaces, the volume growth is due to the fact that both the measure and the diameter are finite - for which the Poincaré inequality/doubling are not needed. They are useful in the open case.}}
%In this part, we check that the analytical notions introduced in the last two paragraphs actually coincide for a stratified space $X$ with some iterated edge metric $g$. 
Our strategy to prove Theorem \ref{MainThm} consists in applying the previous theorem to a compact stratified space $(X,d_g,v_g)$ endowed with the structure of a metric measure space introduced in the first section. The assumption \ref{volumeGrowth} is clearly satisfied since the volume of a compact stratified space is finite. 
%We also know that the Sobolev space $W^{1,2}(X)$ is an Hilbert space and that a Sobolev function with bounded gradient has a Lipschitz representative. 
In order to be able to apply Theorem \ref{th:eks}, we need the Sobolev space as defined in the previous section to agree with the domain of the Cheeger energy, and the different notions of gradients to be equivalent for Sobolev functions.

We have proven that for any locally Lipschitz function $u$, $|\nabla u|_* = \lip (u) $ \ref{3}. %As a consequence, for any such $u$ the Dirichlet energy that we defined in Section 1 coincides with twice the 
%Cheeger energy: 
%$$\mathcal{E}(u)=2\mbox{Ch}(u).$$
By density of Lipschitz functions in the domains of both the Dirichlet energy and the Cheeger energy, such domains coincide:
$$W^{1,2}(X)=D(\Ch).$$
As a consequence, since $W^{1,2}(X)$ is a Hilbert space, the same is true for $D(\Ch)$: any compact stratified space is infinitesimally Hilbertian. 

In particular, for any $u\in W^{1,2}(X)$ we have 
\begin{equation}
\label{ChD}
2\Ch(u)=\int_X |\nabla u |_*^2 dv_g = \int_X |\nabla u|_g^2 dv_g = \mathcal{E}(u),
\end{equation}
where $|\cdot|_g$ is the usual norm with respect to the iterated edge metric $g$.
The density of the Lipschitz functions together with $\ref{ChD}$ also guarantees that for any Sobolev function $u$ we have $|\nabla u|_g = |\nabla u|_*$ almost everywhere. Therefore, the fact that a Sobolev function has a Lipschitz representative, proven in Lemma \ref{LipRep}, also proves that the Sobolev-to-Lipschitz property, as stated in \ref{def:sobtolip}, holds on compact stratified spaces.

Clearly, the Laplace operator $\Delta$ in the sense of a metric measure space is the Laplace operator with Neumann boundary condition in the sense of stratified spaces up to a minus sign: $\Delta=-\Delta_g$.

%\begin{definition}[\cite{giglistructure}]
%The Sobolev-to-Lipschitz property (Definition \ref{def:sobtolip}) of a metric measure space $(X,d,\m)$ is:
%$$
%\mbox{Any $u \in W^{1,2}(X)$ with $|\nabla u|_{*}\leq 1$ $\m$-a.e. admits a 1-Lipschitz representative. }
%$$
%\end{definition}
%Moreover, in Lemma \ref{LipRep} we proved that the metric measure space $(X,d_g,v_g)$ satisfies the Sobolev-to-Lipschitz property (Definition \ref{def:sobtolip}).
Therefore, we can apply Theorem \ref{th:eks}: more precisely, establishing the Bakry-Émery curvature dimension condition $BE(K,n)$ for the Dirichlet energy $\mathcal{E}$ and its Laplace operator will imply the Riemannian curvature-dimension condition $RCD(K,n)$ for a compact stratified space $(X,d_g,v_g)$ with a singular lower Ricci curvature bound.

%%%%%%%%%%%%%%%%%
\section{Examples of stratified spaces with a singular Ricci lower bound}

%%\subsection{Examples of RCD spaces}
In this section we recall and make more precise the definition of a singular lower Ricci bound presented in the introduction; in the following we also illustrate some examples of stratified spaces with a singular Ricci lower bound.

\begin{definition}[Singular Ricci lower bound]
\label{RicciBD}
Let $X$ be a compact stratified space of dimension $n$ endowed with an iterated edge metric $g$. Let $K \in \R$. We say  that $g$ has {\it singular} Ricci curvature bounded from below by $K$ if
\begin{itemize}
\smallskip
 \item[(i)] $\ric_g\geq K$ on the regular set $X^{reg}$,
 \medskip
 \item[(ii)] for every $x\in \Sigma^{n-2}$ we have $\alpha_x \leq 2\pi$.
\end{itemize}
\end{definition}
Lemma 1.1 in \cite{M14} proves that if the Ricci tensor of $g$ is bounded below on the regular set of $X$, then the regular set of each tangent cone $C(S_x)$ carries a metric with non-negative Ricci tensor. As a consequence, for each link $(Z_j, k_j)$ we have $\ric_{k_j}\geq (j'-1)$ on $Z_j^{\mbox{\tiny{reg}}}$, where $j'= n-j-1$ is the dimension of the link. Observe that when the codimension of the strata is strictly larger than 2, the previous implies that the link carries a metric with strictly positive Ricci tensor. Moreover, when we consider the regular set of $C(Z_j)$, that is $C(Z_j^{\mbox{\tiny{reg}}})\setminus \{ 0\}$, this is an open manifold with non-negative Ricci tensor, as observed by J.~Cheeger and M.~Taylor (see \cite{C1}, \cite{CT}). 

As for the stratum of codimension 2, if we only assume that $g$ has Ricci tensor bounded below, we only get that the Ricci tensor on $\s^1$ is non-negative, and we cannot deduce any positivity for the curvature of the two-dimensional cone $C(\s^1)$. In order to have a bound by below for the curvature, in the sense of Alexandrov, or with respect to the curvature-dimension condition $\mbox{CD}(0,2)$ (see \cite{bastco}), we need to assume that the diameter of $(\s^1,a_x^2d\theta^2)$ is less than or equal to $\pi$; equivalently the radius $a_x$ need to be smaller than or equal to one, and the angle $\alpha_x$ is less than or equal to $2\pi$. Since the angle may depend on the point $x \in \Sigma^{n-2}$, we need to assume condition (ii) for all points of the stratum of codimension two.
 
We present some examples of stratified spaces carrying a metric with a singular lower Ricci bound. Thanks to Theorem \ref{MainThm}, all of these examples are $RCD(K,N)$ spaces. Most of them are previously unknown examples and some of them recover the known examples of orbifolds and spherical suspension over smooth manifolds with a Ricci lower bound. 

We point out that, except for the case of orbifolds, all the examples in the following can be constructed in order to have a Ricci lower bound on the regular set and angles $\alpha_x$ along the stratum of codimension two \emph{larger} than $2\pi$. For such examples, having a lower Ricci bound on the regular set does not suffice to satisfy the $RCD(K,N)$ condition. 
\\

\noindent \textbf{Manifolds with isolated conical singularities.} A compact surface with isolated conical singularities of angle less than $2\pi$ and non-negative sectional curvature is known to be an Alexandrov space. It is clearly a simple example of a stratified space with Ricci tensor bounded below as in Definition \ref{RicciBD}. More generally, if we add isolated conical singularities of angle less than $2\pi$ to a compact smooth manifold with Ricci tensor bounded below, we obtain a stratified space satisfying \ref{RicciBD}. \\

\noindent \textbf{A construction of a singular stratum.} Consider a sphere $\s^3$ with round metric $g_0$, and a closed circle $c$ in $\s^3$. By using Fermi coordinates in a tubular neighbourhood $\mathcal{U}^{\eps}$ of $c$ of size $\eps$, it is possible to write the metric in $\mathcal{U}^{\eps}$ as a perturbation of the following product metric: 
$$dr^2+r^2d\varphi^2+a^2d\theta^2,$$
where $a$ is the radius of the circle. More precisely, there exists a positive constant $\Lambda$ such that: 
$$|g_0 - (dr^2+r^2d\varphi^2+a^2d\theta^2)| \leq \Lambda r^{\gamma},$$
where $\gamma=1$ if $c$ is not totally geodesic, $\gamma=2$ otherwise. We refer to the appendix for the details. Now, we can choose $\alpha \in [0,2\pi]$ and modify the metric in $\mathcal{U}^{\eps}$ so that the new metric  does not change outside of $\mathcal{U}^{\eps}$ and it is a perturbation of the singular metric: 
$$ g_{\alpha}= dr^2+\left(\frac{\alpha}{2\pi} \right)^2r^2d\varphi^2+a^2d\theta^2;$$
This makes the circle $c$ a singular stratum of codimension two and angle equal to $\alpha$. Moreover, this construction leaves the Ricci tensor of $g_0$ bounded below away from $c$; therefore we constructed a simple stratified space with Ricci tensor bounded below as in Definition \ref{RicciBD}. The same construction can be done along a codimension two submanifold in any compact smooth manifold with Ricci tensor bounded below. \\

\noindent \textbf{Singular space associated to a static triple.} A static triple is a triple $(M^n,g,V)$ where $(M^n,g)$ is a complete manifold with boundary $\partial M$ and $V$ a static potential, that is a non trivial solution $V \in C^{\infty}(M)$ to the equation
\begin{equation*}
\nabla^2V-(\Delta_gV)g-V Ric_g=0.
\end{equation*}
Static triples have been studied in general relativity and in differential geometry, in the context of prescribing scalar curvature; in the following we mainly refer to \cite{Ambrozio}. A static triple always has constant scalar curvature, which can be renormalized to be equal to $\varepsilon n(n-1)$ with $\varepsilon \in \{+1,0,-1\}$, the boundary $\partial M$ is totally geodesic and $|\nabla V|$ is constant on each connected component of $\partial M$ (Lemma 3 in \cite{Ambrozio}). 

Starting from a static triple, it is possible to construct an associated singular space, which turns out to be an Einstein stratified space. This construction has long been known in the setting of general relativity (for example \cite{GibbonsHawking}); we refer here to Section 6 in \cite{Ambrozio} for the precise details and only recall the main features of the singular space. For any static triple $(M^n,g,V)$ there exists a stratified space $(N^{n+1},h)$ with one singular stratum $ \Sigma$ of codimension 2 which can be identified with $\partial M$, thus can be disconnected. The regular set $X^{\tiny{reg}}$ of $(N^{n+1},h)$ is isometric to the product $\s^1\times (M \setminus \partial M)$ and the metric $h$ on $X^{\tiny{reg}}$ is Einstein with $Ric_h=\varepsilon n h$. The angles along each connected component of $\Sigma$ are determined by the value of $|\nabla V|$. Observe that the stratified space $(N^{n+1},h)$ is compact if and only if $(M^n,g)$ is compact; in this case, the static potential $V$ can be renormalized so that $|\nabla V|\leq 1$ on each connected component of $\partial M$: this implies that the angles along the stratum are smaller than $2\pi$. As a consequence, in the compact case $(N^{n+1},h)$ is a stratified space with Ricci tensor bounded below in the sense of Definition \ref{RicciBD}. \\

\noindent \textbf{Kähler-Einstein manifolds with a divisor.} In \cite{JMR}, T.~Jeffres, R.~Mazzeo and Y.~Rubinstein considered compact Khäler manifolds with a smooth divisor $D$, carrying a metric with angle $\alpha \in (0,2\pi]$  along $D$. The divisor is a singularity of codimension two, and such manifolds belong to the setting of stratified spaces. 

The authors proved the existence of a Kähler-Einstein metric $g$ on $M\setminus D$, whose asymptotic along $D$ has angle $\beta \in (0,2\pi)$. Therefore, $M$ endowed with the Kähler-Einstein metric $g$ is a stratified space with Ricci tensor bounded below as in \ref{RicciBD}.

Note that the existence of a Kähler-Einstein metric with edge singularity has been an important step towards the proof that any smooth K-stable Fano manifold carries a Kähler-Einstein metric (see \cite{CDonS1,CDonS2,CDonS3} and \cite{Tian1}).\\

\noindent \textbf{Orbifolds} Any compact Riemannian $n$-orbifold without boundary is a stratified space (see \cite{TM}). If the regular set of the orbifold has Ricci tensor bounded below, then the orbifold satisfies Definition \ref{RicciBD}. In fact, all the links are quotients of a sphere $\s^k$, for $1\leq k \leq (n-1)$ by a finite group of isometries; even in the case of the stratum of codimension two, and $k=1$, the link is a circle of diameter less than or equal to $\pi$, without any further assumption. Theorem A applied to compact orbifolds without boundary partially recovers Theorem 7.10 in \cite{GalazKellMondinoSosa}\\

%More generally the quotient of a compact Riemannian manifold par a compact group of isometries is a stratified space. {\color{blue} CHECK WHEN IT IS RCD, FOR A REFERENCE SEE A PAPER BY CHEEGER? CF PAPER BY LOTT ON THE SPECTRUM ON FORMS}\\

\noindent \textbf{Spherical suspension.} Consider a circle $(\s^1,a^2d\theta^2)$ of radius $a$ smaller than one, and the following spherical suspension:
%the truncated cone $C(\s^1)=[0,1]\times \s^1/\sim$ endowed with the metric $g=dr^2+a^2r^2d\theta^2$ is clearly a compact stratified space with $Ric_g=0$ on the regular set $C(\s^1)\setminus \{0\}$, and such that the angle at the tip of the cone is smaller than $2\pi$. Now consider the following spherical suspension: 
\begin{align*}
S^n_{\alpha}&=[0,\frac{\pi}{2}]\times \s^{n-2}\times \s^1\\
g_{\alpha}&=d\varphi^2+\cos^2(\varphi)g_{S^{n-2}}+\left(\frac{\alpha}{2\pi} \right)^2\sin^2\varphi d\theta^2,
\end{align*}
where $g_{\s^{n-2}}$ is the round metric of the unit sphere $\s^{n-2}$ and $\alpha=2\pi a$. Then $(S^n_{\alpha},g_{\alpha})$ is a compact stratified space with singular set of codimension 2 and angle $\alpha \leq 2\pi$. Moreover, it is easy to check that $g_{\alpha}$ is an Einstein metric with $Ric_{g_{\alpha}}=(n-1)$. Therefore $(S^n_{\alpha},g_{\alpha})$ is a compact stratified space with Ricci tensor bounded below. 

More generally, if we consider a compact smooth manifold $(M^n,g)$ of dimension $n\geq 2$ and with $Ric_g \geq (n-1)$, the spherical suspension $([0,\pi]\times M, dt^2+\sin^2(t)g)$ is a compact stratified space satisfying Definition \ref{RicciBD}. Therefore, Theorem A agrees with previous results of \cite{bastco}.  Moreover, the spherical suspension of a compact stratified space satisfying Definition \ref{RicciBD} is also a compact stratified space with a singular Ricci lower bound. 

\section{Geometric consequences of the curvature-dimension condition}

Thanks to Theorem \ref{MainThm} we know that a compact stratified space $(X^n,g)$ with singular Ricci curvature bounded below by $K$ is a $RCD(K,n)$ metric measure space. This allows us to apply to stratified spaces several geometric results that are known in the setting of $RCD(K,n)$ metric measure spaces and of smooth Riemannian manifolds, but are new in the case of stratified spaces. Moreover, the $RCD(K,N)$ condition can be used to obtain a characterization of compact stratified spaces with curvature bounded below in the sense of Alexandrov. 

\subsection{Essential non-branching} The first consequence of the $RCD(K,n)$ condition is that $(X^n,g)$ is \emph{essentially non-branching}. A metric measure space $(X,d,\mathfrak{m}$) is said to be essentially non-branching if for any two measures $\mu_0,\mu_1$ in the Wasserstein space $\mathcal{P}^2(X,\mathfrak{m})$, absolutely continuous with respect to $\mathfrak{m}$, any optimal plan $\pi$ between $\mu_0$ and $\mu_1$ is concentrated on a set of non-branching geodesics. The fact that a $RCD(K,\infty)$ space is essentially non-branching has been proven in \cite{RajalaSturm}. Thus a stratified space with singular Ricci curvature bounded below is essentially non-branching. 
We point out that essential non-branching does not exclude the existence of branching geodesics, which may occur in the setting of stratified spaces. Nevertheless, examples of branching $RCD(K,N)$ spaces are not known. 

\subsection{Bishop-Gromov} A direct consequence of the $RCD(K,N)$ condition is the Bishop-Gromov volume estimate. This has been proven for $CD(K,N)$ spaces by K.-T.~Sturm in \cite[Theorem 2.3]{stugeo2}. Let $\omega_n$ be the volume of the unit ball in the Euclidean space and define:
$$v_k(r)=n\omega_n \int_0^r \sin_k(t)^{n-1}dt,$$
that is the volume of a ball of radius $r$ in the $n$-dimensional space form of constant curvature $k$. Then the following holds:

\begin{cor}[Bishop-Gromov volume estimate] Let $(X^n,d_g,v_g)$ be a stratified with singular Ricci curvature bounded below by $K$ and $n > 1$. Then for any $0< r < R <\mbox{diam}(X)$ we have: 
$$ \frac{\mbox{\emph{vol}}_g(B(x,r))}{\mbox{\emph{vol}}_g(B(x,R))}\leq \frac{v_{K/(n-1)}(r)}{v_{K/(n-1)}(R)}.$$
\end{cor}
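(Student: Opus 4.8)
The plan is to obtain this estimate as a direct consequence of Theorem~\ref{MainThm} combined with the Bishop--Gromov inequality already available for generic curvature-dimension spaces; no genuinely new work is required, the whole substance having been absorbed into the proof of the main theorem.

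Concretely, I would argue in two steps. First, by Theorem~\ref{MainThm}, the stratified space $(X^n,d_g,v_g)$ with singular Ricci curvature bounded below by $K$ satisfies $RCD(K,n)$, hence in particular the Lott--Sturm--Villani condition $CD(K,n)$: the $RCD$ condition is by construction stronger than $CD$, and for the finite measure $v_g$ the starred and unstarred versions coincide by \cite{CavallettiMilman}. Second, I would invoke K.-T.~Sturm's Bishop--Gromov theorem \cite[Theorem~2.3]{stugeo2}: for any $CD(\ke,N)$ space with $N\in(1,\infty)$ and any point $x$, the function
\[
r\longmapsto \frac{\m(B(x,r))}{\int_0^r\sin_{\ke/(N-1)}(t)^{N-1}\,dt}
\]
is non-increasing on $(0,\infty)$, respectively on $\bigl(0,\pi\sqrt{(N-1)/\ke}\,\bigr)$ when $\ke>0$. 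Applying this with $N=n$, $\m=v_g$, and recalling that $v_k(r)=n\omega_n\int_0^r\sin_k(t)^{n-1}\,dt$ with $k=K/(n-1)$, the multiplicative constant $n\omega_n$ cancels in the quotient, so that $r\mapsto \vol_g(B(x,r))/v_{K/(n-1)}(r)$ is non-increasing; evaluating this monotonicity between the radii $r<R$ yields the asserted comparison of ratios.

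A few points deserve attention, although none is a real obstacle. The hypothesis $n>1$ is used precisely so that $N-1=n-1>0$ and the comparison density is non-degenerate. When $K>0$ one must keep $R$ inside the interval on which the model function is defined; but $RCD(K,n)$ with $K>0$ forces the Bonnet--Myers bound $\diam(X)\le \pi\sqrt{(n-1)/K}$ (itself a consequence of $CD(K,n)$), so the standing assumption $R<\diam(X)$ suffices. Finally one should confirm that $v_g$ is indeed the reference measure to which Sturm's statement applies and that balls are measured with $d_g$; this is guaranteed by the construction in Section~\ref{volume-measure}, in particular by the finiteness and $n$-Ahlfors regularity of $v_g$ and by $\vol_g(\Sigma)=0$. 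Thus the only mildly delicate step is reconciling the normalization of Sturm's comparison function with the explicit $v_k$ appearing in the statement, which is pure bookkeeping since the overall constant drops out of the ratio.
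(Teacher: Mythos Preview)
Your proposal is correct and matches the paper's own approach: the corollary is presented there as a direct consequence of Theorem~\ref{MainThm} together with Sturm's Bishop--Gromov inequality \cite[Theorem 2.3]{stugeo2} for $CD(K,N)$ spaces, with no further argument given. Your additional remarks on the normalization, the $n>1$ hypothesis, and the diameter bound when $K>0$ are accurate and in fact more detailed than what the paper records.
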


Observe that if $x$ is a point in the regular set, the Bishop-Gromov volume estimate holds for a sufficiently small radius due to the fact that the Ricci tensor is bounded below on $X^{reg}$. The result is new when we consider a point $x$ in a singular stratum or when the radii are large. 

\subsection{Laplacian comparisons} We refer to N.~Gigli's proof of Laplace comparisons for $CD(K,N)$ spaces, in Theorem 5.14 and Corollary 5.15 of \cite{giglistructure}. Note that both of these results need the metric measure space $(X,d,\mathfrak{m})$ to be compact and $q$-infinitesimally strictly convex for some real $q$. When $(X,d,\mathfrak{m})$  is infinitesimally Hilbertian, it is 2-infinitesimally strictly convex, as proven in \cite{agsriemannian}; therefore, Laplace comparisons as given in \cite{giglistructure} hold in any compact $RCD(K,N)$ space. We point out that F.~Cavalletti and A.~Mondino \cite{CM18} recently proved Laplacian comparisons for the distance function removing the assumptions of infinitesimal strict convexity and compactness of the space.

For the sake of completeness, we state the result in the setting of compact stratified spaces for the distance function. As in the case of the Bishop-Gromov volume estimate, the interest of the result is for the distance function to a point in the singular set or because the estimate also holds far away from the base point.

We define the measure valued Laplacian as follows: 

\begin{definition}
Let $E \subset X$ be an open set in $(X^n,g)$, $\mu$ a Radon measure concentrated on $E$, and $f$ a locally Lipschitz function on $E$. We say that $f$ has distributional Laplacian  bounded from above by $\mu$ on $E$ if for any nonnegative $\varphi \in \mbox{Lip}_0(E)$, the following holds: 
$$\int_X (\nabla f,\nabla \varphi)_g dv_g \leq \int_X \varphi \,d\mu.$$ 
In this case, we write $\Delta_g f \leq \mu$ on $E$.
\end{definition}

\begin{cor}
Let $(X^n,g)$ be a compact stratified space with singular Ricci curvature bounded from below by $k(n-1) \in \R$. For $x\in X$, let $d_x$ be the distance function from $x$, namely $d_x(y)=d_g(x,y)$. Then  the following inequalities hold: 
\begin{equation*}
\Delta_g d_x \leq (n-1) \frac{\sin_k'(d_x)}{\sin_k(d_x)}dv_g \, \mbox{ on } X \setminus \{x\}, \quad \Delta_g  \left(\frac{d_x^2}{2}\right) \leq n \,d_x \frac{\sin_k'(d_x)}{\sin_k(d_x)}dv_g  \mbox{ on } X.
\end{equation*}
\end{cor}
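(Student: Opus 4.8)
The plan is to read both inequalities off the Laplacian comparison theorem for $CD(K,N)$ metric measure spaces, once we know that $(X,d_g,v_g)$ is $RCD(k(n-1),n)$. That last fact is exactly Theorem~\ref{MainThm}: by hypothesis $g$ has singular Ricci curvature bounded below by $k(n-1)$ and $\dim X=n$. The space is compact with $v_g$ finite, so \eqref{volumeGrowth} holds; being $RCD$ it is infinitesimally Hilbertian, hence $2$-infinitesimally strictly convex by \cite{agsriemannian}. These are precisely the hypotheses under which N.~Gigli's Laplacian comparison (Theorem~5.14 and Corollary~5.15 of \cite{giglistructure}) applies; alternatively one may invoke \cite{CM18}, which dispenses with infinitesimal strict convexity and compactness.

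First I would apply that comparison with $K=k(n-1)$ and $N=n$, so that the model parameter is $K/(N-1)=k$. For every $x\in X$ the function $d_x=d_g(x,\cdot)$ is $1$-Lipschitz, hence lies in $W^{1,2}(X)$, with $|\nabla d_x|_*=\lip(d_x)=1$ $v_g$-a.e.: the equality $|\nabla d_x|_*=\lip(d_x)$ is \eqref{3}, and $\lip(d_x)\equiv 1$ on $X\setminus\{x\}$ because $(X,d_g)$ is a geodesic space (move slightly along a minimizing geodesic from $x$ to $y$). The abstract comparison then bounds the measure-valued Laplacian of $d_x$ on $X\setminus\{x\}$ and of $\tfrac12 d_x^2$ on $X$ by the measures with densities $(n-1)\tfrac{\sin_k'(d_x)}{\sin_k(d_x)}$ and $n\,d_x\tfrac{\sin_k'(d_x)}{\sin_k(d_x)}$ respectively. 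It remains to rewrite this in the language of the Definition preceding the statement, which is a pure translation of conventions: by the discussion around \eqref{ChD}, $W^{1,2}(X)=D(\Ch)$, the metric measure pointwise product $\langle\nabla\cdot,\nabla\cdot\rangle$ agrees $v_g$-a.e.\ with $(\nabla\cdot,\nabla\cdot)_g$, and $\Delta=-\Delta_g$; moreover $C^\infty_0(X^{\tiny{reg}})$, hence $\mbox{Lip}_0(E)$ for any open $E\subseteq X$, is dense in the relevant Sobolev space. Under this dictionary the distributional inequality for the metric measure Laplacian, tested against nonnegative $\varphi\in\mbox{Lip}_0(X\setminus\{x\})$ (resp.\ $\mbox{Lip}_0(X)$), is exactly $\int_X(\nabla d_x,\nabla\varphi)_g\,dv_g\le\int_X\varphi\,d\mu$ with $\mu=(n-1)\tfrac{\sin_k'(d_x)}{\sin_k(d_x)}v_g$, resp.\ the analogue for $\tfrac12 d_x^2$, i.e.\ the asserted $\Delta_g d_x\le\mu$ on $X\setminus\{x\}$ and $\Delta_g(\tfrac12 d_x^2)\le n\,d_x\tfrac{\sin_k'(d_x)}{\sin_k(d_x)}v_g$ on $X$.

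Since Theorem~\ref{MainThm} is taken as given, no real difficulty remains. The only points needing care are the bookkeeping of the various $v_g$-a.e.\ equivalent notions of gradient and of Laplacian, together with the sign convention $\Delta=-\Delta_g$, and the elementary verification that $d_x\in W^{1,2}(X)$ with $|\nabla d_x|_g=1$ $v_g$-a.e., which is what lets the abstract statements (formulated for $d_x$ and for $\tfrac12 d_x^2$) be invoked and their comparison functions be put in the stated form.
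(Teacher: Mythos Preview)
Your proposal is correct and follows essentially the same route as the paper: invoke Theorem~\ref{MainThm} to obtain $RCD(k(n-1),n)$, observe that infinitesimal Hilbertianity gives $2$-infinitesimal strict convexity, and then apply Gigli's Laplacian comparison (Theorem~5.14 and Corollary~5.15 of \cite{giglistructure}) or alternatively \cite{CM18}. The paper itself merely cites these results and states the corollary without further argument; your additional bookkeeping (identifying $|\nabla d_x|_g=1$ a.e., the sign convention $\Delta=-\Delta_g$, and the match with the Definition of measure-valued Laplacian) is more detailed than what the paper writes but entirely in the same spirit.
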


This clearly corresponds to the situation in the smooth setting. 

\subsection{Lévy-Gromov isoperimetric inequality} A classical and well known isoperimetric inequality for smooth manifolds is the Lévy-Gromov isoperimetric inequality. Let $(M^n,g)$ be a compact smooth manifold with Ricci tensor bounded below by $(n-1)$, and consider a domain $\Omega$ in $M$ with smooth boundary and volume 
$$ \vol_g(\Omega)=\beta.$$
Denote by $B_{\beta}$ a geodesic ball in the standard sphere $\s^n$ with volume equal to $\beta \vol(\s^n)$. If equality
$$ \frac{\vol_g(\Omega)}{\vol_g(M)}=\frac{\vol(B_{\beta})}{\vol(\s^n)}$$
holds, then we have:
\begin{equation}
\label{LG}
\frac{\vol_g(\partial \Omega)}{\vol_g(M)} \geq \frac{\vol(\partial B_{\beta})}{\vol(\s^n)},
\end{equation}
where we denote by $\vol$ the Riemannian volume in the round sphere $\s^n$. Moreover, the equality in \ref{LG} holds if and only if $(M^n,g)$ is isometric to the standard sphere and $\Omega$ is isometric to the geodesic ball $B_{\beta}$.

In \cite{CavallettiMondino1} the authors proved an analog result in the setting of $RCD^*(K,N)$ spaces for $K >0$ and $N \geq 2$. We state it in the setting of compact stratified spaces. We consider $(X^n,g,\mathfrak{m})$ a compact stratified space with the renormalized measure
$$\mathfrak{m}=\vol_g(X)^{-1}dv_g,$$
so that $\mathfrak{m}(X)=1$. The outer Minkowski content of an open set $E \subset X$ is used to measure the size of the boundary. It is defined by: 
$$ \mathfrak{m}^+(E)=\liminf_{\eps \rightarrow 0^+}\frac{\mathfrak{m}(E^{\eps})-\mathfrak{m}(E)}{\eps},$$
where $E^{\eps}$ is the tubular neighbourhood of size $\eps$ of $E$ with respect to the distance $d_g$. 
%
%{\color{red} QUESTION: in the case of a stratified space, isn't the Minkowsky content exactly the measure of the boundary coming from $\vol_g$?
%
%{\color{violet} Depends on what is the boundary measure in stratified space, I mean what happens in Singularities? I would just leave it with the Minkowski content}}

Then, thanks to Theorem 1.1 in \cite{CavallettiMondino1} we have:

\begin{cor}[Lévy-Gromov isoperimetric inequality]
Let $(X^n,g,\mathfrak{m})$ be a compact stratified space with singular Ricci curvature bounded below by $n-1 >0$. Then for every open set $E \subset X$ the following inequality holds:
$$ \mathfrak{m}^+(E) \geq \frac{\vol(\partial B_{\beta})}{\vol(\s^n)},$$
where $\beta=\mathfrak{m}(E)$ and $B_{\beta}$ is a geodesic ball in $\s^n$ of volume $\vol(B_{\beta})=\beta \vol(\s^n)$. 
\end{cor}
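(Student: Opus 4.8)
The plan is to obtain this isoperimetric inequality as a direct consequence of Theorem~\ref{MainThm} combined with the sharp Lévy--Gromov inequality on $RCD^*$ spaces established by F.~Cavalletti and A.~Mondino in \cite{CavallettiMondino1}.

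First I would apply Theorem~\ref{MainThm}: since $(X,g)$ is a compact stratified space of dimension $n$ whose iterated edge metric has singular Ricci curvature bounded below by $n-1$, the metric measure space $(X,d_g,v_g)$ satisfies $RCD(n-1,n)$. Observe that $n-1>0$ forces $n\ge 2$, so the dimension parameter $N=n$ is an integer with $N\ge 2$. Replacing $v_g$ by the normalised measure $\mathfrak{m}=\vol_g(X)^{-1}v_g$ changes neither the distance nor the local geometry, and multiplying the reference measure of a metric measure space by a positive constant preserves the $CD(K,N)$ condition, hence also $RCD(K,N)$; therefore $(X,d_g,\mathfrak{m})$ is still $RCD(n-1,n)$, now with total mass one. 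Since $\mathfrak{m}$ is finite, the equivalence of $RCD(K,N)$ and $RCD^*(K,N)$ for finite measures (\cite{CavallettiMilman}) shows that $(X,d_g,\mathfrak{m})$ satisfies $RCD^*(n-1,n)$; in any case the implication $RCD(K,N)\Rightarrow RCD^*(K,N)$ always holds.

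Next I would invoke Theorem~1.1 of \cite{CavallettiMondino1}, which gives the sharp Lévy--Gromov isoperimetric inequality on any $RCD^*(K,N)$ space of unit mass with $K>0$ and $N\ge 2$: for every Borel set $E$ (in particular every open set) one has $\mathfrak{m}^+(E)\ge \mathcal{I}_{K,N}(\mathfrak{m}(E))$, where $\mathcal{I}_{K,N}$ is the model isoperimetric profile. For $K=N-1$ with $N=n$ an integer this model profile is precisely that of the round sphere $\s^n$ normalised to unit volume, i.e. $\mathcal{I}_{n-1,n}(\beta)=\vol(\partial B_\beta)/\vol(\s^n)$, where $B_\beta\subset\s^n$ is a geodesic ball of normalised volume $\beta$. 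Applying this with $\beta=\mathfrak{m}(E)$ yields the asserted inequality. If one instead quotes the version of \cite{CavallettiMondino1} phrased through the perimeter functional $P(E)$, it suffices to combine $P(E)\ge \vol(\partial B_\beta)/\vol(\s^n)$ with the general bound $\mathfrak{m}^+(E)\ge P(E)$ valid on any metric measure space.

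Since the argument is essentially a citation chain, there is no genuine analytic obstacle here; the only points that require care are bookkeeping ones — verifying that Theorem~\ref{MainThm} produces the dimension parameter $N=n$ rather than something larger (so that the comparison model is the genuine round sphere $\s^n$ and not a fractional-dimensional weighted model), that renormalising the reference measure is harmless for the $RCD^*$ condition, and that the passage from perimeter to outer Minkowski content goes in the direction needed. As in the preceding subsections, one may also note that for $x$ in the regular set and at small scales the statement is already contained in the classical smooth Lévy--Gromov inequality, the real content of the corollary being its global validity, in particular near the singular strata.
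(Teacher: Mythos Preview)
Your proposal is correct and follows exactly the approach of the paper: the corollary is stated there as an immediate consequence of Theorem~\ref{MainThm} together with Theorem~1.1 of \cite{CavallettiMondino1}, with no further argument given. Your additional bookkeeping remarks (normalisation of the measure, $RCD$ versus $RCD^*$, perimeter versus Minkowski content) are harmless elaborations of this citation.
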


%Note that it the Ricci tensor is bounded below by $k>0$ we can always renormalize it in order to have Ricci tensor bounded below by $(n-1)$. In this case 
We also obtain the following rigidity result: 

\begin{cor}[Rigidity in Lévy-Gromov]
Let $(X^n,g,\mathfrak{m})$ be a compact stratified space with singular Ricci curvature bounded below by $(n-1)$. If there exists an open domain $E$ in $X$ satisfying:
$$ \mathfrak{m}^+(E) = \frac{\vol(\partial B_{\beta})}{\vol(\s^n)},$$
then there exists a compact stratified space $(Y^{n-1},h)$ with singular Ricci curvature bounded below by $(n-2)$ such that $(X^n,g)$ is isometric to the spherical suspension $([0,\pi]\times Y, dt^2+\sin^2(t)h)$.
\end{cor}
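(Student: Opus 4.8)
The plan is to deduce this rigidity statement from the corresponding rigidity result in the theory of $RCD^*(K,N)$ spaces, namely the equality case of the Lévy-Gromov isoperimetric inequality proved by Cavalletti and Mondino (Theorem 1.1 in \cite{CavallettiMondino1}), together with the maximal diameter / spherical suspension rigidity theorem for $RCD^*(N-1,N)$ spaces due to Ketterer \cite{KObata}. First, by Theorem \ref{MainThm}, $(X^n,d_g,v_g)$ is an $RCD(n-1,n)$ space, hence (after normalizing the measure to be a probability measure, which does not affect the curvature-dimension condition) an $RCD^*(n-1,n)$ space. The hypothesis $\mathfrak{m}^+(E)=\vol(\partial B_\beta)/\vol(\s^n)$ is exactly the equality case in the isoperimetric inequality of the previous corollary, so the rigidity part of \cite{CavallettiMondino1} applies and yields that $(X,d_g,\mathfrak{m})$ is (isomorphic as a metric measure space to) a spherical suspension $[0,\pi]\times_{\sin}^{n-1} Y'$ over some $RCD^*(n-2,n-1)$ space $(Y',d_{Y'},\mathfrak{m}_{Y'})$ with $\mathfrak{m}_{Y'}$ a probability measure; equivalently $X$ has diameter $\pi$ and one invokes the maximal diameter theorem.

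The second and main step is to identify the abstract cross-section $Y'$ with a compact stratified space carrying a singular lower Ricci bound. For this I would argue as follows. The spherical suspension structure gives an explicit bi-Lipschitz-on-compacta identification of $X\setminus\{\text{two suspension poles}\}$ with $(0,\pi)\times Y'$ equipped with the warped product metric $dt^2+\sin^2(t)\,d_{Y'}^2$ and measure $\sin^{n-1}(t)\,dt\otimes\mathfrak{m}_{Y'}$. On the other hand, $X$ is a smoothly stratified space, so removing the (at most two) suspension points, the regular set $X^{reg}$ minus finitely many points is an open smooth Riemannian manifold which, by the warped product description, splits isometrically (on the regular part) as $(0,\pi)\times Y^{reg}$ with metric $dt^2+\sin^2(t)h$ for a smooth Riemannian metric $h$ on an open manifold $Y^{reg}$. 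One then checks that $Y$, with its induced length metric, inherits a smoothly stratified structure of dimension $n-1$: the strata of $Y$ are the "cross-sections'' of the strata of $X$ (each stratum $\Sigma^j$ of $X$ of dimension $j$, other than possibly those containing the poles, is of the form $(0,\pi)\times \Sigma^{j-1}_Y$, and the local cone-bundle charts of $X$ restrict to cone-bundle charts of $Y$), and the iterated edge condition \ref{e-def-ite-met} for $g$ on $X$ transfers to the iterated edge condition for $h$ on $Y$ because the warping factor $\sin t$ is smooth and bounded away from $0$ on any compact subinterval of $(0,\pi)$. The metric measure space $(Y,d_h,v_h)$ (up to normalizing the measure) then coincides with $(Y',d_{Y'},\mathfrak{m}_{Y'})$, so in particular $Y'$ is itself a compact stratified space satisfying $RCD^*(n-2,n-1)$.

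Finally, one applies the "only if'' direction of Theorem \ref{MainThm} to $(Y^{n-1},h)$: being $RCD(n-2,n-1)$, it has dimension at most $n-1$ and its iterated edge metric $h$ has singular Ricci curvature bounded below by $n-2$, i.e.\ $\ric_h\geq n-2$ on $Y^{reg}$ and all codimension-two cone angles of $Y$ are $\leq 2\pi$. Combined with the isometry $X\cong([0,\pi]\times Y,\,dt^2+\sin^2(t)h)$ established above, this is exactly the assertion of the corollary. The main obstacle is the middle step: one must verify carefully that the abstract metric-measure cross-section produced by \cite{KObata} and \cite{CavallettiMondino1} genuinely is a smoothly stratified space and that the warped product isometry is compatible with the stratified structures (in particular that the suspension poles correspond to either regular points or to legitimate $0$-dimensional strata of $X$, and that no pathology of long geodesics near the singular set obstructs the identification of length metrics); all the curvature bookkeeping afterwards is routine given Theorem \ref{MainThm} and the behaviour of Ricci curvature under warped products.
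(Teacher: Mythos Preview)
Your overall strategy is correct and matches one of the two arguments the paper gives, but you are working harder than necessary in two places.

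First, the paper's primary argument bypasses the identification problem entirely. From Cavalletti--Mondino one gets only that $\diam(X)=\pi$; then, instead of invoking the abstract $RCD$ maximal diameter theorem of \cite{KObata}, the paper appeals to Theorems~2.3 and~3.1 of \cite{M15}, which are rigidity results proved \emph{directly within the category of stratified spaces}: a compact stratified space with singular Ricci curvature bounded below by $(n-1)$ and diameter $\pi$ is already isometric to a spherical suspension over a compact stratified space $(Y^{n-1},h)$ with singular Ricci curvature bounded below by $(n-2)$. This gives the conclusion in one step, without ever having to recognise an abstract $RCD$ cross-section as a stratified space.

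Second, the paper also sketches your route (via Theorem~1.4 of \cite{CavallettiMondino1}) as an alternative, but handles your ``main obstacle'' much more cheaply. Rather than slicing strata and checking that cone-bundle charts restrict, the paper observes that in a stratified space the tangent cone at any point is a metric cone over a compact stratified space (the tangent sphere). At either suspension pole of $[0,\pi]\times_{\sin} Y$ the tangent cone is the metric cone over $Y$; hence $Y$ is a stratified space. Once $Y$ is known to be stratified and $RCD(n-2,n-1)$, the ``only if'' direction of Theorem~\ref{MainThm} gives the singular Ricci bound on $h$, exactly as you say. Your cross-section argument would presumably work, but this tangent-cone observation is both shorter and avoids the delicate issues you flag about compatibility of length metrics near the poles.
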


\begin{proof}
In \cite{CavallettiMondino1}, the authors prove that if equality holds in the Lévy-Gromov isoperimetric inequality, then $X$ must have diameter equal to $\pi$. Therefore, by Theorem 2.3 and 3.1 in \cite{M15}, $(X^n,g)$ must be isometric to a spherical suspension of a stratified space satisfying the analog bound on the singular Ricci curvature. Another way to prove the same, is by using Theorem 1.4 in \cite{CavallettiMondino1}, which tells us that the stratified space is isometric to a spherical suspension of a $RCD(n-2,n-1)$ space $Y$. In particular, the spherical suspension is a stratified space and tangent cones at all points are metric cones over a stratified space of dimension $(n-1)$. Now, then tangent cone at the points $\{0\}\times Y$ and $\{\pi\}\times Y$ is the metric cone over $Y$. As a consequence, $Y$ is also a stratified space.
\end{proof}

%{\color{red} QUESTION: In Theorem 1.4 of \cite{CavallettiMondino1} they say the space is \emph{isomorphic} to a spherical suspension. For m.m.s., does that mean isometric + the isometry preserves the measure?}
%
%{\color{violet} YES}
%
%{\color{red} QUESTION: \cite{CavallettiMondino1} also have "almost rigidity", that is mGH-closeness to a spherical suspension. Have we any hope to show that the equator in this spherical suspension is a stratified space? {\color{violet} I think, this is maybe difficult. For Ricci limit spaces it is not known if the equator in this situation is again a Ricci limit space} 
%Would it mean to have a precompactness theorem for stratified spaces? {\color{violet} What do you mean? We have precompactness for a family with uniform lower ricci curvature bound, diameter bound and dimension bound under GH convergence because of Bishop-Gromov}}

\subsection{Weyl law} Let $\{\lambda_i\}_{i \in \N}$ be the sequence of eigenvalues of the Laplacian $\Delta_g$. For any $\lambda >0$ we define: 
$$ N(\lambda)=\sharp\{\lambda_i,\mbox{such that } \lambda_i \leq \lambda\}.$$
A well-known result on smooth manifolds states that the asymptotics of $N(\lambda)$ as $\lambda$ tends to infinity is given by $\lambda^{-n/2}$ times a constant which depends on the volume and on the dimension of the manifold. An analog result has been proven in \cite[Corollary 4.8]{AHT} in the setting of $RCD^*(K,N)$ spaces, when the measure is Ahlfors $n$-regular for some $n \in \N$; the Riemannian volume is replaced by the $n$-dimensional Hausdorff measure of the space.
%and $n$ is the maximal dimension in the quantitative stratification of $RCD^*(K,N)$ spaces introduced by A.~Mondino and A.~Naber in \cite{MondinoNaber}.
This result clearly applies to stratified spaces with singular Ricci curvature bounded from below: 
\begin{cor}
Let $(X^n,d_g,v_g)$ be a stratified space with singular Ricci curvature bounded from below. Then we have:
$$ \lim_{\lambda \rightarrow +\infty}\frac{N(\lambda)}{\lambda^{\frac n2}}=\frac{\omega_n}{(2\pi)^n}\vol_g(X).$$
\end{cor}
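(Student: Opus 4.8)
The plan is to apply the Weyl law for $RCD^*(K,N)$ spaces established in \cite[Corollary 4.8]{AHT}, so the proof reduces to checking that a compact stratified space with singular Ricci curvature bounded below satisfies all the hypotheses of that result. First I would invoke Theorem \ref{MainThm}: a compact stratified space $(X^n,g)$ with singular Ricci curvature bounded below by some $K\in\R$ is an $RCD(K,n)$ space, hence in particular an $RCD^*(K,n)$ space (the $RCD$ condition implies $RCD^*$; alternatively, since $v_g(X)<\infty$ the two are equivalent by Cavalletti--Milman). Thus the abstract Weyl asymptotics hold, with the leading term expressed through the $\mathcal H^n$-measure of $X$, provided the reference measure $v_g$ is comparable in the right sense to $\mathcal H^n$.

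The key point is therefore the identification of the constant: \cite{AHT} states the Weyl law in the form
\begin{equation*}
\lim_{\lambda\to+\infty}\frac{N(\lambda)}{\lambda^{n/2}}=\frac{\omega_n}{(2\pi)^n}\,\mathcal H^n(X),
\end{equation*}
under the assumption that the reference measure is Ahlfors $n$-regular (equivalently, that the essential dimension of the $RCD$ space is $n$ and the measure has the form $\theta\,\mathcal H^n$ with $\theta$ bounded away from $0$ and $\infty$). On a stratified space we already know from Proposition \ref{doubling} that $v_g$ is $n$-Ahlfors regular, and that the singular set $\Sigma$ has $v_g$-measure zero and is in fact of Hausdorff dimension at most $n-2$; on the regular set $X^{reg}$, which is an open dense smooth $n$-manifold, $v_g$ coincides with the Riemannian volume and hence with $\mathcal H^n$. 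Therefore $v_g=\mathcal H^n$ as measures on $X$, and the constant in \cite{AHT} becomes exactly $\omega_n(2\pi)^{-n}\vol_g(X)$.

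Concretely the steps are: (1) apply Theorem \ref{MainThm} to get the $RCD^*(K,n)$ structure with the canonical metric measure structure $(X,d_g,v_g)$; (2) recall that the Laplacian $\Delta_g$ of the stratified space agrees (up to the sign convention) with the Laplace operator of the metric measure space, so its eigenvalue counting function $N(\lambda)$ is the one appearing in the abstract statement — this was already observed in Section 2, together with discreteness of the spectrum from \cite{ACM12}; (3) verify the Ahlfors regularity / essential-dimension hypothesis of \cite{AHT} via Proposition \ref{doubling}; (4) identify $v_g$ with $\mathcal H^n$ using $v_g(\Sigma)=0$ and the fact that on $X^{reg}$ the volume measure is the Riemannian one, so that the $\mathcal H^n$-mass of $X$ equals $\vol_g(X)$; (5) conclude.

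The main obstacle — though a mild one given the groundwork already laid — is step (3)--(4): one must be sure that the notion of "dimension" used in \cite{AHT} (the essential dimension of the $RCD$ space, together with the requirement that the measure be a constant multiple, or a bounded density times, $\mathcal H^n$) matches the topological/Riemannian dimension $n$ of the stratified space, and that no collapsing or density degeneration occurs at the singular strata. This follows because the tangent cone at every point is an $n$-dimensional metric cone (Lemma on tangent cones in Section \ref{subsubsec:tangentcones}): at regular points it is $\R^n$, and at singular points it is $C(S_x)$ with $S_x$ an $(n-1)$-dimensional stratified space, so the density $\lim_{r\to 0}v_g(B(x,r))/(\omega_n r^n)$ exists and is finite everywhere, equal to $1$ $v_g$-a.e. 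Hence $v_g=\mathcal H^n$ and the essential dimension is $n$, which is exactly what \cite{AHT} needs.
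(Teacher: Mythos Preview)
Your proposal is correct and follows the same route as the paper: apply Theorem \ref{MainThm} to get the $RCD^*(K,n)$ structure, invoke Ahlfors $n$-regularity from Proposition \ref{doubling}, and then appeal to \cite[Corollary 4.8]{AHT}. You spell out more carefully than the paper does the identification $v_g=\mathcal H^n$ (using $v_g(\Sigma)=0$, $\dim_{\mathcal H}\Sigma\le n-2$, and that on $X^{reg}$ the Riemannian volume is the Hausdorff measure), which is exactly the step needed to convert the $\mathcal H^n(X)$ constant in \cite{AHT} into $\vol_g(X)$.
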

Observe that in the smooth setting the Weyl law holds without any assumption on the Ricci curvature. It is then reasonable to believe that on stratified spaces too, the hypothesis of singular Ricci curvature bounded from below could be dropped. 

\subsection{Further properties of geodesics and $\CBB$ stratified
  spaces} 
\label{sec:furth-prop-geod}

The applications of Theorem \ref{MainThm} described in this subsection
were pointed to us by V. Kapovitch. 

The $RCD(K,N)$ property can be used to gain further knowledge on the
behavior of geodesics: we will be able to show that the regular set
$X^{reg}$ of a stratified space $(X,g)$ with singular Ricci curvature
bounded below is almost everywhere convex. 

This will in turn
allow us to use a theorem of N. Li to prove the analogue of Theorem
\ref{MainThm} in the presence of lower bounds on the sectional curvature.
Besides the work of N. Li \cite{NanLi} which considers such probabilistic convexity
properties in the context of Alexandrov geometry, these have also been
investigated for Ricci limit spaces, see \cite{ColdingNaber} and the
references therein.

The proof of the almost everywhere convexity of $X^{reg}$ relies on
the measure contraction property $MCP(K,N)$, which is a consequence of
the $RCD(K,N)$ property, see \cite{cavmon}, end of section 5.

A subset $U$ of a geodesic metric measure space $(X,d,m)$ is said to be 
$m$-almost everywhere convex if for every $x\in U$ : \[m\left(\{y\in X|\text{no
  minimizing geodesic from $x$ to $y$ is included in $U$}\}\right)=0.\]

\begin{proposition}
  Let $(X,g)$ be a compact stratified space with singular Ricci curvature
bounded below by some $K\in\mathbb{R}$, then $X^{reg}$ is 
$v_g$-almost everywhere convex.
\end{proposition}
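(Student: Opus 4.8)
The plan is to deduce the statement from the measure contraction property $MCP(K,n)$, which holds on $(X,d_g,v_g)$ because this space is $RCD(K,n)$ by Theorem \ref{MainThm} and $RCD(K,N)$ implies $MCP(K,N)$ (see \cite{cavmon}); we may assume $n\geq 2$, since for $n=1$ there are no singular strata. Fix a point $x\in X^{reg}$ and write $d_x:=d_g(x,\Sigma)>0$ and $D:=\diam(X)$. Since $(X,d_g,v_g)$ is essentially non-branching and satisfies $MCP(K,n)$, for $v_g$-a.e.\ $y$ there is a unique minimizing geodesic $\gamma^y\colon[0,1]\to X$ from $x$ to $y$, and (by a measurable selection argument, the relevant graph being closed) the map $y\mapsto\gamma^y$ may be taken Borel; I will write $\Phi_t(y):=\gamma^y(t)$ for $t\in[0,1]$. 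It then suffices to show $v_g(B)=0$ for
\[
B:=\{\,y : \gamma^y\text{ is defined and }\gamma^y([0,1])\cap\Sigma\neq\emptyset\,\},
\]
because for every $y$ outside $B$ and the exceptional null set, $\gamma^y$ is a minimizing geodesic from $x$ to $y$ contained in $X^{reg}=X\setminus\Sigma$.

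I would use two ingredients. The first is the contraction estimate defining $MCP(K,n)$: for every Borel $A$ and every $t\in(0,1]$,
\[
v_g\big(\Phi_t(A)\big)\ \geq\ \int_A\Big(\frac{\sin_{K/(n-1)}\!\big(t\,d_g(x,z)\big)}{\sin_{K/(n-1)}\!\big(d_g(x,z)\big)}\Big)^{n-1}t\ dv_g(z)\ \geq\ c(t)\,v_g(A),
\]
where $c(t):=\inf_{0<r\leq D}\big(\sin_{K/(n-1)}(tr)/\sin_{K/(n-1)}(r)\big)^{n-1}t$ is, using the generalized Bonnet--Myers theorem when $K>0$ to ensure $D<\pi\sqrt{(n-1)/K}$, a positive continuous function of $t\in(0,1]$ depending only on $K,n,D$. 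Taking $A=\Phi_t^{-1}(E)$ and using $\Phi_t(\Phi_t^{-1}(E))\subseteq E$ yields, for every Borel $E$,
\[
v_g\big(\{y:\gamma^y(t)\in E\}\big)\ \leq\ c(t)^{-1}\,v_g(E).
\]
The second ingredient is that $\Sigma$ has Minkowski codimension at least two: with $\Sigma_\eps:=\{z:d_g(z,\Sigma)<\eps\}$ one has $v_g(\Sigma_\eps)=O(\eps^2)$ as $\eps\to0$, obtained by covering the compact set $\Sigma$ with $O(\eps^{-(n-2)})$ balls of radius $\eps$ (using that $\Sigma$ has finite $(n-2)$-dimensional Minkowski content, by induction on the depth and the cone structure of $X$) and bounding the volume of each such ball by $C\eps^n$ via the $n$-Ahlfors regularity of Proposition \ref{doubling}.

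To combine them, fix $\eps\in(0,d_x/4)$ and set $s_0:=d_x/(2D)$. If $\gamma^y([0,1])$ meets $\Sigma_\eps$, say at $\gamma^y(t_0)$, then $d_x-\eps<d_g(x,\gamma^y(t_0))=t_0\,d_g(x,y)\leq t_0D$, so $t_0\geq s_0$. Choosing a partition $s_0=\tau_0<\dots<\tau_m=1$ of mesh $\leq\eps/D$ (so $m=O(\eps^{-1})$) and using that $\gamma^y$ has constant speed $\leq D$, some $\tau_k$ obeys $d_g(\gamma^y(\tau_k),\gamma^y(t_0))\leq\eps/2$, hence $\gamma^y(\tau_k)\in\Sigma_{2\eps}$. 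Thus, with $B_\eps:=\{y:\gamma^y([0,1])\cap\Sigma_\eps\neq\emptyset\}$,
\[
v_g(B_\eps)\ \leq\ \sum_{k=0}^m v_g\big(\{y:\gamma^y(\tau_k)\in\Sigma_{2\eps}\}\big)\ \leq\ (m+1)\Big(\min_{[s_0,1]}c\Big)^{-1}v_g(\Sigma_{2\eps})\ =\ O(\eps^{-1})\cdot O(\eps^2)\ =\ O(\eps).
\]
Since $\Sigma$ is closed and each $\gamma^y([0,1])$ is compact, $B=\bigcap_{\eps>0}B_\eps$ and the sets $B_\eps$ decrease with $\eps$; letting $\eps\to0$ gives $v_g(B)=0$, which proves the claim for this $x$, hence for every $x\in X^{reg}$.

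The hard part is the passage from the \emph{fixed-time} control given by $MCP$ (the bound on $v_g(\{y:\gamma^y(t)\in E\})$ for a single $t$) to control over \emph{all} times simultaneously, i.e.\ to the assertion that $\gamma^y$ avoids $\Sigma$ throughout: the mesh argument achieves this, but only thanks to the quantitative estimate $v_g(\Sigma_\eps)=o(\eps)$, so the genuinely delicate point is proving that estimate uniformly across the nested strata of $X$ — in particular near the lower-dimensional strata accumulating onto $\Sigma^{n-2}$ — which is where the Ahlfors regularity and the inductive cone structure enter. One could alternatively run the same argument through the $L^1$-needle decomposition of $v_g$ along the potential $d_g(x,\cdot)$, each needle being a one-dimensional $MCP(K,n)$ ray carrying a density bounded below away from $x$, but upgrading ``the needle meets $\Sigma$ in zero length'' to ``the needle avoids $\Sigma$'' still requires the same Minkowski-content input.
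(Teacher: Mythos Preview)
Your proof is correct and follows essentially the same route as the paper's: both deduce the result from the measure contraction property (available via $RCD(K,n)$ and \cite{cavmon}), fix a regular basepoint, discretize the time interval into $O(\eps^{-1})$ steps, use the MCP density bound at each discrete time to control the set of endpoints whose geodesic passes through $\Sigma_\eps$ at that time, and sum, exploiting $v_g(\Sigma_\eps)=O(\eps^2)$ to get a bound tending to zero. The paper phrases the MCP bound via the Cavalletti--Mondino density estimate $(e_t)_*\Pi\leq Ct^{-N}v_g/v_g(A)$ for the optimal dynamical plan, whereas you use Ohta's volume-contraction formulation $v_g(\Phi_t(A))\geq c(t)\,v_g(A)$ with the explicit $\tau$-coefficients; and the paper uses a dyadic partition while you use an arbitrary one of mesh $\eps/D$---but these are cosmetic differences.
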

\begin{proof}
  Without loss of generality we assume that $(X,g)$ has volume 1, so
  that $v_g$ is a probability measure.

   Let $\mathcal{G}(X)$ denote the set of constant speed geodesics
  $\gamma:[0,1]\to X$ and $\Sigma$ denote the singular set of $X$.

  Since $(X,d_g)$ is $RCD(K,N)$, it is essentially non branching and
  $CD(K,N)$, and will satisfy a measure contraction property which we
  state in the following form (see Theorem 1.1 of \cite{cavmon}) :
  
  \emph{Let $\mu_0\in\mathcal{P}(X)$ be the Dirac mass at some point $x_0$
  and $\mu_1=\tfrac{v_g}{v_g(A)}|_{A}$ for some measurable $A\subset
  X$. There exists a unique optimal dynamical transport plan
  $\Pi\in\mathcal{P}(\mathcal{G}(X))$ such that
  $(e_0)_*\Pi=\mu_0$ and $(e_1)_*\Pi=\mu_1$. Moreover, for every
  $t\in (0,1]$, $(e_t)_*\Pi$ is $v_g$-absolutely continuous and
  satisfy:
  \[(e_t)_*\Pi\leq \frac{C}{t^N}\frac{v_g}{v_g(A)}.\]
  where $C$ is a constant depending only on $K$, $N$ and $\diam(X)$
   and $e_t:\mathcal{G}(X)\to X$ maps $\gamma$ to $\gamma(t)$. }

  Let us fix $\mu_0=\delta_{x_0}$ with $x_0\in X^{reg}$,
  $\mu_1=\tfrac{v_g}{v_g(X)}$ and $\Pi\in\mathcal{P}(\mathcal{G}(X))$
  the optimal 
  dynamical transport plan defined above. Set
  $\Gamma=\{\gamma\in\mathcal{G(X)}|\exists t\in 
  [0,1]\ \gamma(t)\in\Sigma\}$. To show that $X^{reg}$ is $v_g$-almost
  everywhere convex, we need to show that $v_g(e_1(\Gamma))=0$.

  We first show that $\Pi(\Gamma)=0$. Pick $l\in\mathbb{N}$ big enough
  such that $\varepsilon\diam(X)\leq d_g(x_0,\Sigma)=\delta$  with
  $\varepsilon=2^{-k}$ and set, for every
  integer $i$ between $0$ and $2^{l}-1$ :
  \[\Gamma_i=\left\{\gamma\in\mathcal{G}(X)\middle|
      \gamma\left([\tfrac{i}{2^l},\tfrac{i+1}{2^l}]\right)\cap 
      \Sigma\neq\emptyset\right\}.\]
  We have that $\Gamma=\cup_{i=1}^{2^{l}-1}\gamma_i$. 

  Note that
  $\Gamma_i=\emptyset$ as long as $\frac{i}{2^l}\leq \delta$. 

  If
  $\Pi(\Gamma)>0$, then there are some $i$ such that
  $\Pi(\Gamma_i)>0$. For every such $i$, let
  $\Pi_i=\tfrac{\Pi}{\Pi(\Gamma_i)}|_{\Gamma_i}$. Being the
  restriction of $\Pi$, $\Pi_i$ is an optimal dynamical dynamical
  transport plan between $\delta_{x_0}$ and the borel set
  $B_i=e_1(\Gamma_i)$, thus we can apply the measure contraction
  property 
  to $\Pi_i$ and get :
  \[(e_t)_*(\Pi_i)\leq \frac{C}{t^N}\frac{v_g}{v_g(B_i)}
    =\frac{C}{t^N}\frac{v_g}{\Pi(\Gamma_i)}.\]
  Now we notice that if $\gamma\in\Gamma_i$ and $t\in
  [\tfrac{i}{2^l},\tfrac{i+1}{2^l}]$, then $d(\gamma(t),\Sigma)\leq
  D \varepsilon$ which can be rephrased as $e_t(\Gamma_i)\subset
  \Sigma^{\varepsilon}$ where $\Sigma^{\varepsilon D}$ is the
  $\varepsilon D$
  tubular neighborhood around $\Sigma$. Hence, for $t\in
  [\tfrac{i}{2^l},\tfrac{i+1}{2^l}]$ :
  \[1=(e_t)_*\Pi_i(X)= (e_t)_*(\Pi_i(\Sigma^{\varepsilon D}))\leq 
    \frac{C}{t^N}\frac{v_g(\Sigma^{\varepsilon D})}{\Pi(\Gamma_i)}\leq
    \frac{C}{(\tfrac{\delta}{D})^{N}}\frac{v_g(\Sigma^{\varepsilon
        D})}{\Pi(\Gamma_i)}\] 
  since $t\geq \tfrac{\delta}{D}$ if $\Gamma_i\neq\emptyset$.

  Thus :
  \[\Pi(\Gamma_i)\leq C\delta^{-N}D^Nv_g(\Sigma^{\varepsilon D}).\]

  We can now estimate :
  \begin{align*}
  \Pi(\Gamma)&\leq \sum_i\Pi(\Gamma_i)\leq 2^l
    C\delta^{-N}D^Nv_g(\Sigma^{\varepsilon D})\\
             &\leq 
    2^l C\delta^{-N}D^NA(\varepsilon D)^2\leq
               2^{-l}CA\delta^{-N}D^{N+2} 
  \end{align*}
  since the volume of $\Sigma^{\varepsilon}$ can be bounded from above
  by $A\varepsilon^2$ for some constant $A>0$. This comes from the
  fact that $\Sigma$ has codimension at least $2$ using the same
  ideas as section \ref{volume-measure}.

  Since $l$ can be chosen to be arbitrarily large, we
  have shown that $\Pi(\Gamma)=0$.

  Now $v_g(e_1(\Gamma))=\Pi(\Gamma)$ and thus $(X,d_g)$ is $v_g$-almost
  everywhere convex. 
\end{proof}
\begin{remark}
  %The facts that $X$ was a stratified space and $\Sigma$ was its
  %singular set played no significant role here.
We actually proved here that for any subset $Y$ of an $RCD(K,N)$ space $(X,d,m)$ such that $\tfrac{m(Y^\varepsilon)}{\varepsilon}$ goes to $0$ as $\varepsilon$ goes to $0$, $X\backslash Y$ is $m$-almost everywhere convex. This applies in particular for a stratified space with singular Ricci curvature bounded below and $Y = \Sigma$ its singular set, thanks to the fact that the singular set has codimension smaller or equal than two. 
\end{remark}

We can now use the previous result together with the theorem of N. Li
\cite{NanLi} to characterize stratified spaces with
curvature bounded from below in the sense of Alexandrov (Corollary
\ref{MainCor} in the introduction). Recall that a
geodesic space $(X,d)$ is said to be an Alexandrov space with
curvature bounded from below by 
$k$ ($\CBB(k)$ in short) if geodesic triangles in $(X,d)$ are
larger than their couterparts in the simply connected surface of
constant curvature $k$. For a precise definition we refer to
\cite{bbi}, Chapters 4 and 10.

\begin{cor}
  Let $(X,g)$ be a compact stratified space. Then $(X,d_g)$ is $\CBB(k)$ if and
only if the following two conditions are satisfied :
\begin{enumerate}[(i)]
\item The sectional curvature of $g$ is larger than or equal to $k$ on $X^{reg}$.
\item The angle $\alpha$ along the codimension 2 stratum
  $\Sigma^{n-2}$ is at most $2\pi$. 
\end{enumerate}
\end{cor}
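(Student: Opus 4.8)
The plan is to prove the two implications separately, both of which rely on the bridge between sectional curvature bounds and the $RCD$/$\CBB$ framework. For the "only if" direction, suppose $(X,d_g)$ is $\CBB(k)$. Then its regular set $X^{reg}$, being an open subset carrying a smooth Riemannian metric, must have sectional curvature $\geq k$: indeed, around any regular point small geodesic balls are totally geodesic neighborhoods isometric to Riemannian balls, and the Toponogov comparison inherited from the ambient $\CBB(k)$ structure forces $\sec_g\geq k$ there (this is a purely local, classical fact). For condition (ii), I would argue by blow-up: the tangent cone at a point $x\in\Sigma^{n-2}$ is, by the Lemma on tangent cones in Section \ref{sec-pre-stra}, the metric cone $\R^{n-2}\times C(\s^1_{a_x})$ where $\s^1_{a_x}$ has length $\alpha_x$. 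Tangent cones of $\CBB(k)$ spaces are $\CBB(0)$, and a product $\R^{n-2}\times C(\s^1_{a_x})$ is $\CBB(0)$ precisely when $C(\s^1_{a_x})$ is, which in turn holds iff the circle has length $\leq 2\pi$, i.e. $\alpha_x\leq 2\pi$. (Alternatively: if $\alpha_x>2\pi$ one directly exhibits a geodesic triangle near $x$ violating Toponogov, since the cone angle is too wide.)

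For the "if" direction, assume (i) and (ii). Condition (i) implies in particular $\ric_g\geq (n-1)k\wedge 0$ on $X^{reg}$ (take $K=\min\{(n-1)k,0\}$ or simply $K=(n-1)k$ if that is more convenient), and together with (ii) this says $g$ has singular Ricci curvature bounded below in the sense of Definition \ref{RicciBD}. By Theorem \ref{MainThm}, $(X,d_g,v_g)$ is then $RCD(K,n)$. The subsection on "Further properties of geodesics" shows that such a space is essentially non-branching and that $X^{reg}$ is $v_g$-almost everywhere convex. These are exactly the hypotheses needed to invoke N.~Li's theorem \cite{NanLi}: if a geodesic metric measure space is essentially non-branching, the "good" set $X^{reg}$ is almost everywhere convex, and $X^{reg}$ equipped with the smooth metric $g$ has $\sec_g\geq k$, then the metric completion — which here is $(X,d_g)$, since $X^{reg}$ is open and dense and the distances agree — is $\CBB(k)$. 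One must check the compatibility conditions of Li's result: that $d_g$ restricted to $X^{reg}$ coincides with the length metric of $(X^{reg},g)$ up to the almost-everywhere-convexity correction (which follows from Lemma \ref{PropLipCurve}, allowing admissible curves to be pushed into $X^{reg}$ with arbitrarily small length increase), and that the ambient space is geodesic (established in Section \ref{sec-pre-stra}).

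The main obstacle I expect is the careful application of N.~Li's theorem: one must verify that the hypotheses in \cite{NanLi} match precisely the structure available here — in particular that "almost everywhere convexity of $X^{reg}$" in our measure-theoretic sense is the notion Li uses, that the smooth lower curvature bound on the open dense regular part propagates to the completion despite the a priori lack of geodesic convexity of $X^{reg}$, and that no extra regularity of geodesics through the singular set is secretly required. A secondary technical point is the blow-up argument in the "only if" direction: one needs that $\CBB$ bounds pass to tangent cones (standard) and the sharp characterization of when $\R^{m}\times C(\s^1_a)$ is non-negatively curved in the Alexandrov sense, together with the identification of the tangent sphere at a codimension-two point as the spherical suspension structure described in the Remark following the tangent cone Lemma. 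Both are known, but assembling them cleanly against the stratified-space definitions is where the care lies.
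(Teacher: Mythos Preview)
Your proposal follows essentially the same route as the paper: for the ``only if'' direction, local convex neighbourhoods plus Toponogov give $\sec_g\geq k$ on $X^{reg}$, and the tangent-cone blow-up gives the angle bound; for the ``if'' direction, sectional $\geq k$ gives a singular Ricci lower bound, Theorem~\ref{MainThm} yields $RCD$, the Proposition in \S\ref{sec:furth-prop-geod} gives $v_g$-a.e.\ convexity of $X^{reg}$, and then Li's theorem closes the argument.

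One point to correct: your stated hypotheses for Li's result are not quite the ones used. The version invoked in the paper (Corollary~0.1 of \cite{NanLi}) does \emph{not} require essential non-branching; rather it requires that the good open dense set $Y$ be $\mathcal{H}^n$-a.e.\ convex and that \emph{every point of $Y$ admit a geodesically convex neighbourhood which is itself $\CBB(k)$}. The paper supplies this second ingredient by referring back to the argument in \S\ref{RCDversRic}: around any regular point, sufficiently small balls are geodesically convex (classical Riemannian fact), and on such a ball $\sec_g\geq k$ gives $\CBB(k)$ by Toponogov. You should replace ``essentially non-branching'' with this local convex-neighbourhood condition in your plan; otherwise the pieces line up.
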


\begin{proof}
  The ``only if'' part is proven along the same lines as the
  $RCD(K,N)$ case, see section \ref{RCDversRic}. The condition on the
  regular set comes from the existence of convex neighborhoods and the
  Riemannian Toponogov Theorem. The angle condition on the codimension 2
  stratum comes the fact that tangent cones to $\CBB(k)$ spaces are
  $\CBB(0)$ spaces and that a $2$ dimensional metric cone 
  is $\CBB(0)$ if and only if its angle is at most $2\pi$.

  For the ``if'' part, we use Corollary 0.1 of \cite{NanLi}. It states
  that if in a geodesic metric space $(X,d)$ of Hausdorff dimension
  $n$, one can find an open 
  dense set $Y$ which is $\mathcal{H}^n$-almost everywhere convex and
  such that any point in $Y$ has a convex neighborhood which is $\CBB(k)$, then
  $(X,d)$ is $\CBB(k)$. In our case, $X^{reg}$ is open and dense in
  $(X,d)$, and is
  almost everywhere convex by the previous proposition. Furthermore
  every point in $X^{reg}$ has a convex neighborhood by section
  \ref{RCDversRic} which is $\CBB(k)$ by the classical Toponogov Theorem.

  Hence $(X,d_g)$ has curvature bounded from below by $k$ in the sense of
  Alexandrov.

\end{proof}

%\bibliography{biblioGeometric}
%\bibliographystyle{amsalpha}
%
%\end{document}

%%%%%%%%%%%%%%%%%%
\section{Proof of the main theorem}
This section is devoted to the proof of our main theorem: \\

\noindent \textbf{Theorem A.} A compact stratified space $(X,d_g,v_g)$ endowed with an iterated edge metric $g$ satisfies the $RCD(K,N)$ condition if and only if its dimension is smaller than or equal to $N$ and the iterated edge metric $g$ has singular Ricci curvature bounded below by $K$ in the sense of Definition \ref{RicciBD}. \\

The proof is divided in two parts. In the first we prove that a compact stratified spaces which is also $RCD(K,N)$ has a singular Ricci lower bound. In the second part, we prove the reverse implication, by showing the Barky-Émery inequality. At the end of Section 2, we observed that a compact stratified space meets the assumption of Theorem \ref{th:eks}, and as a consequence the Bakry-Émery inequality implies that the space is an $RCD^*(K,N)$ space. The equivalence between $RCD^*(K,N)$ and $RCD(K,N)$ proven in \cite{CavallettiMilman} allows us to conclude. 

\subsection{$RCD$ implies singular Ricci curvature bounded below}
\label{RCDversRic}
\begin{proposition}
Let $X$ be an $n$-dimensional stratified space, and let $g$ be an iterated edge metric.
Assume $(X,d_g,v_g)$ satisfies the condition $CD(K,N)$ (or the condition $CD^*(K,N)$) with $K\in \mathbb{R}$ and $N\in [1,\infty)$. Then, $g$ has singular Ricci curvature bounded from below by $K$
(in the sense of a stratified space) and $n\leq N$.
\end{proposition}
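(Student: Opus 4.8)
\emph{Strategy.} The plan is to prove the three assertions --- $\ric_g\ge K$ on $X^{reg}$, $\alpha_x\le 2\pi$ along $\Sigma^{n-2}$, and $n\le N$ --- separately. For the first and third I will use that the curvature--dimension condition (I argue for $CD(K,N)$; the argument for $CD^*(K,N)$ is identical) localizes to geodesically convex open subsets and, on a smooth Riemannian manifold, reduces to a classical lower Ricci bound; for the second I will use its stability under pointed measured Gromov--Hausdorff convergence, applied to the blow-ups at points of $\Sigma^{n-2}$.

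\emph{Regular set and dimension.} Fix $p\in X^{reg}$. Since $g$ is smooth near $p$, for $r$ small enough $B(p,r)\subset X^{reg}$, $d_g$ restricted to $B(p,r)$ is the intrinsic Riemannian distance, and $B(p,r)$ is geodesically convex in $X$: any admissible curve joining two points of $B(p,r)$ of length $<2r$ stays in $B(p,3r)\subset X^{reg}$, so below the convexity radius of $(X^{reg},g)$ at $p$ the $d_g$-minimizing geodesics between points of $B(p,r)$ are Riemannian geodesics lying in $B(p,r)$. Hence every $W_2$-geodesic between probability measures supported in $B(p,r)$ stays in $B(p,r)$, and $(B(p,r),d_g,v_g)$ --- which is just the smooth Riemannian ball $(B(p,r),g)$ with its Riemannian volume --- inherits $CD(K,N)$. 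By the equivalence on smooth unweighted Riemannian manifolds between the curvature--dimension condition and the pair ($\ric\ge K$, $\dim\le N$) (\cite{stugeo2} and its $CD^*$ analogue), this forces $\ric_g\ge K$ on $B(p,r)$ and $n\le N$; letting $p$ vary gives $\ric_g\ge K$ on all of $X^{reg}$ and $n\le N$.

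\emph{Angle along $\Sigma^{n-2}$.} Fix $x\in\Sigma^{n-2}$ and write $\alpha_x=2\pi a_x$. By Section~\ref{sec-pre-stra}, the rescaled pointed spaces $\bigl(X,\tfrac1\eps d_g,x,v_g(B(x,\eps))^{-1}v_g\bigr)$ converge as $\eps\to0$, in the pointed measured Gromov--Hausdorff sense, to the tangent cone $T_xX=\R^{n-2}\times C(\s^1_{a_x})$ with its natural Riemannian measure --- here the uniform $n$-Ahlfors regularity of $v_g$ prevents collapsing and the comparison \ref{vol} identifies the limit measure. Since each rescaled space satisfies $CD(\eps^2K,N)$ and $\eps^2K\to0$, stability of the curvature--dimension condition makes $T_xX$ a $CD(0,N)$ space. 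The subtle point now is that the tangent sphere $S_x=\s^{n-3}\ast\s^1_{a_x}$ has diameter $\pi$ regardless of $a_x$, so one cannot read off $\alpha_x$ from $T_xX=C(S_x)$ directly; the Euclidean directions must be split off first. To that end, note that the singular set of $T_xX$ is $\R^{n-2}\times\{o\}$, of codimension two and hence of zero capacity (as in Section~\ref{sec-pre-stra}), so $W^{1,2}(T_xX)$ equals the Sobolev space of its flat Riemannian regular part and the Cheeger energy of $T_xX$ is quadratic: $T_xX$ is infinitesimally Hilbertian, hence $RCD(0,N)$. Applying the splitting theorem for $RCD(0,N)$ spaces $n-2$ times peels off the $\R^{n-2}$ factor and exhibits $C(\s^1_{a_x})$ as an $RCD(0,N-n+2)$ space (with $N-n+2\ge2$ by the first step), so in particular it satisfies a curvature--dimension condition with finite dimension parameter. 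By the Bacher--Sturm theorem on Euclidean cones \cite{bastco}, a Euclidean cone $C(Y)$ satisfying such a condition must have $\diam(Y)\le\pi$; with $Y=\s^1_{a_x}$, of diameter $\pi a_x$, this gives $a_x\le1$, i.e. $\alpha_x\le2\pi$.

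\emph{Main obstacle.} The crux is exactly this last reduction. For $N=n$ one can dispose of $\alpha_x>2\pi$ by hand with a Bishop--Gromov comparison on $T_xX$, contrasting the volume growth at the tip (governed by $a_x$) with that at a nearby regular point (Euclidean); but for $N>n$ the extra $r^{N-n}$ slack absorbs the angle defect and this crude comparison fails, so one genuinely needs the two-dimensional Bacher--Sturm obstruction transported through the $\R^{n-2}$ factor. The clean transport is the one above --- recognizing $T_xX$ as $RCD$ via the codimension-two structure of the singular set and invoking the splitting theorem --- while the hands-on alternative is to re-run Bacher--Sturm's optimal-transport focusing argument directly inside $\R^{n-2}\times C(\s^1_{a_x})$: two small balls placed on opposite sides of the stratum at angular distance $\ge\pi$ have their $W_2$-geodesic concentrate onto the singular set at the midpoint, violating the $CD(0,N)$ inequality for every finite $N$.
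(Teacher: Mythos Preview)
Your proof is correct. The treatment of the regular set and the dimension bound follows the paper's: restrict to a small geodesically convex ball in $X^{reg}$, where $CD(K,N)$ localizes and reduces to the smooth characterization $\ric_g\ge K$, $n\le N$.

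For the angle condition you take a genuinely different route. Both you and the paper blow up at $x\in\Sigma^{n-2}$ and use stability to make the tangent cone $T_xX=\R^{n-2}\times C(\s^1_{a_x})=C(S_x)$ a $CD(0,N)$ space. The paper then cites Bacher--Sturm to obtain $\diam S_x\le\pi$ and concludes $\diam\s^1_{a_x}\le\pi$ from the embedding $\s^1_{a_x}\hookrightarrow S_x$. Your observation that $\diam S_x=\pi$ for every $a_x$ (when $n\ge3$) is correct --- two equatorial points are joined through the $\s^{n-3}$ factor by a path of length at most $\pi$ --- so the bare diameter inequality for $S_x$ carries no information, and the equatorial embedding is distance-preserving only when $a_x\le1$, which is precisely the conclusion sought. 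Your workaround via Gigli's splitting theorem is clean: upgrade the tangent cone to $RCD(0,N)$ (your capacity argument is fine, but the quicker route is that $X$ is infinitesimally Hilbertian unconditionally by Section~2, hence $CD\Rightarrow RCD$ on $X$ and stability carries $RCD$ to the blow-up), peel off $\R^{n-2}$, and apply Bacher--Sturm to the two-dimensional factor $C(\s^1_{a_x})$ where the diameter obstruction is nonvacuous. The paper's intended argument is presumably your ``hands-on alternative'' --- transplanting the Bacher--Sturm focusing picture directly into $\R^{n-2}\times C(\s^1_{a_x})$, where the Wasserstein midpoint of two balls at angular separation $>\pi$ still collapses onto a lower-dimensional set --- which is lighter machinery but requires reopening the proof rather than quoting a result; your route trades that for the heavier but black-box splitting theorem.
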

\begin{proof}

{\textbf 1.}
First, the condition $CD(K,N)$ (or the condition $CD^*(K,N)$) implies that $\dim_{\mathcal{H}}\leq N$ by \cite[Corollary 2.5]{stugeo2}. Hence, $\dim_{X^{reg}}=n\leq N$.
\\

{\textbf 2.}
Moreover, consider $x\in X^{reg}$. Recall that $g$ is a Riemannian metric on $X^{reg}$ that induces a distance function $\tilde{d}$ on $X^{reg}$.
In general, it is clear that $\tilde{d}\geq d_g|_{X^{reg}\times X^{reg}}$, and $\epsilon$-balls w.r.t. $d_g$ coincide with $\epsilon$-balls w.r.t. $\tilde{d}$ 
provided $\epsilon>0$ is sufficiently small. Moreover, by \cite[Chapter 3]{docarmo} for any such $\epsilon$ we can find
$\eta\in (0,\epsilon/4)$ such that $\overline{B_{\eta}(x)}$ is geodesically convex w.r.t. $\tilde{d}$. 
\medskip

{\it Claim:
We have $\tilde{d}|_{\overline{B_{\eta}(x)}\times \overline{B_{\eta}(x)}}=d_g|_{\overline{B_{\eta}(x)}\times \overline{B_{\eta}(x)}}$.}

Indeed, if $\gamma$ is a minimizing $d_g$-geodesic between $y,z\in B_{\eta}(x)$, by the triangle inequality
we have that $\mbox{Im}\gamma\subset B_{\epsilon}(x)$. Therefore, $\tilde{d}(y,z)\leq d_g(y,z)$ since $\gamma$ is an admissible competitor for $\tilde{d}$. The other inequality already holds, and therefore the claim follows.
\smallskip

Hence, $(Y,\tilde{d}=d_g|_{Y},v_g|_{Y})$ with $Y=\overline{B_{\eta}(x)}$ is a geodesically convex subspace with positive measure of a metric measure space $(X,d_g,v_g)$ satisfying the condition $CD(K,N)$,
and therefore satisfies the condition $CD(K,N)$ as well 
(or the condition $CD^*(K,N)$) by \cite[Proposition 1.4]{stugeo2}.

Then, we can procede with similar arguments as in the proofs of Theorem 1.7 in \cite{stugeo2}, Theorem 1.1 in \cite{renessesturm}, or Theorem 7.11 in \cite{agguide}.
%we have $\ric_{g}|_{B_{\eta/2}(x)}\geq K$, and therefore $\ric_{g}|_x\geq K$. 
We note that one usually assumes the context of a closed Riemannian manifold without boundary that is different from ours. But 
it is clear that the arguments adapt to the case of an open, geodesically convex domain. For instance, let us briefly outline the argument from \cite{agguide} (compare also with the proof Theorem 6.1 in \cite{ketterermondino}).

Assume the condition $CD(K,N)$ holds but there exists a regular point $x\in X$ and a tangent vector $v$ at $x$ such that $\ric_g|_x(v)\leq (K-4\epsilon)|v|^2$. Then, one can pick $\eta$ as above, and one finds a 
smooth function $\phi$ with compact support in $B_{\eta}(x)$ such that 
$$
\nabla \phi|_x=v\ \ \ \&\ \ \ \nabla^2\phi(x)=0. 
$$
We can replace $\phi$ and $v$ by $\delta\phi$ and $\delta v$ such that the previous remains true and $\phi$ becomes a smooth Kantorovich potential. If we define 
$T_t(y)=\exp_y(-t\nabla\phi|_y)$ for $t\in [0,1]$ and $$\mu_t=T_{\star}\mu_0 \ \ \mbox{ with }\ \ \mu_0=v_g(B_{\theta}(x))^{-1}v_g|_{B_{\theta}}(x).$$ then $t\in [0,1]\mapsto \mu_t$ becomes a smooth $L^2$-Wasserstein geodesic.
Note that by choice of $B_{\eta}(x)$ and $\phi$ each transport geodesic $t\in [0,1]\mapsto T_t(y)$ is contained in $B_{\eta}(x)$.
By choosing $\delta$ and $\theta$ sufficiently small one can achieve that no transport geodesic meets a cut point and $\sigma_y:t\in [0,1]\mapsto \log\det DT_t(x)$ satisfies
$$
\sigma''_y + \frac{1}{n}\left(\sigma_y'\right)^2+ K-\epsilon\geq 0 \ \mbox{ on } [0,1].
$$
for any $y\in B_{\eta}(x)$. The previous Riccatti-type inequality in particular follows from smooth Jacobi field compuations for geodesic variations in $B_{\eta}(x)$. From this
one can deduce an inequality for $S_n$ along $(\mu_t)_{t\in [0,1]}$ like in the definition of $CD(K,N)$ but with reverse inequalities and $K$ replaced by $K-\epsilon$ (again compare with \cite{ketterermondino}). 
This gives a contradiction.  
\\

{\textbf 3.}
Pick a point $x\in \Sigma^{n-2}$. 
Note that the corresponding $Z_x^1$ satisfies $Z_x^{n-2}=Z_x^{n-2,reg}\simeq \mathbb{S}^1$ with $h=c^2(d\theta)^2$ for some {$c\in (0,+\infty)$} and the standard metric $(d\theta)^2$.
From the exposition in subsection \ref{subsubsec:tangentcones} we have that $(B_{1/n}(x),nd_g)$ converges in 
Gromov-Hausdorff sense to $(C(S_x^{n-1}),d_{C})$ that is the metric euclidean cone over $S^{n-1}_x$ which is the $(n-2)$-fold spherical suspension of the link $Z_x^{n-2}$. 
Hence, since $(C(S_x^{n-1}),d_{C})$ is the (measured) Gromov-Hausdorff limit of a sequence of $CD(\frac{1}{n}K,N)$-spaces ($CD^*(\frac{1}{n}K,N)$-spaces respectively), it is an $CD(0,N)$-space itself.
%It is not hard to see that $d_{g_0}$ 
%is the $l^2$-product metric between the euclidean metric on $\mathbb{R}^{n-2}$ and the cone metric $d_{(dr)^2\otimes r^2(c d\theta)^2}=d_{C}$ on 
%$C(Z^1_x)$. Moreover, the normalized measures ${v_g(B_{1/n})}^{-1}v_g$ converge weakly to $\vol_{g_0}=\mathcal{L}^{n-2}\otimes rdr\otimes c d\theta$.
%Hence, by stability of the (reduced) curvature-dimension condition under measured Gromov-Hausdorff convergence $(\mathbb{R}^{n-2}\times C(Z_x^{1}),d_{g_0},\vol_{g_0})$ satisfies $CD(0,N)=CD^*(0,N)$. 
%Hence - for instance by applying Gigli's splitting theorem - $(C(Z^1_x), d_{C}, rdr\otimes c d\theta)$ satisfies the condition $CD(0,2)$.

Finally, we can apply \cite[Corollary 2.6]{bastco} that yields $\diam{S_x^{n-1}}\leq \pi$. Note that Corollary 2.6 in \cite{bastco} is a result about the euclidean cone over some one dimensional 
manifold but it is clear from the proof that the statement holds as well in our context. Since by definition of $n$-fold spherical suspensions a copy of $Z_x^{n-2}$ is isometrically 
embedded into $S_x^{n-1}$, the 
link $Z_x^{n-2}$ has bounded diameter by $\pi$ as well.
\end{proof}

\subsection{Singular Ricci curvature bounded below implies $RCD$}

For the second implication, we are going to prove that a compact stratified space with a singular lower Ricci curvature bound satisfies the Bakry-Émery condition given in Definition \ref{def:BE}. As illustrated in Subsection 2.1.5, we can then apply Theorem \ref{th:eks} and conclude. For the sake of clarity, we state here the weak Bochner inequality of Definition \ref{def:BE} in the setting of a compact stratified space. 

A compact stratified space $(X,d_g,v_g)$ satisfies the $BE(K,N)$ condition for $K \in \R$ and $N \in \N$ if for any function $u \in W^{1,2}(X)$ such that $\Delta_g u \in W^{1,2}(X)$ and for any test function $\psi \in W^{1,2}(X) \cap L^{\infty}(X)$ such that $\Delta_g\psi \in L^{\infty}(X)$, $\psi \geq 0$ we have:
\begin{equation}
\label{BE}-
\frac 12 \int_X \Delta_g \psi |du|^2 dv_g +\int_X \psi (\nabla(\Delta_g u), \nabla u)_g dv_g \geq \int_X \psi\left(K|du|^2+\frac{(\Delta_g u)^2}{N}\right)dv_g. 
\end{equation}

\subsubsection{Proof of the Bochner inequality on stratified spaces}

We are going to prove the weak Bochner inequality first for an eigenfunction $\varphi$ of the Laplacian, then for finite linear combinations of eigenfunctions. Since eigenfunctions are dense in the domain of the Laplacian, we will get a first Bochner inquality with the further assumption that the test function $\psi$ has bounded gradient. We will be able to drop this assumption and get the inequality \ref{BE} by using the ultracontractivity of the heat semigroup. 

We recall here some regularity properties of eigenfunctions. First of all, we know that an eigenfunction $\varphi$ belongs to $W^{1,2}(X)\cap L^{\infty}(X)$; moreover, when we have singular Ricci curvature bounded below by a constant 
$K\in \R$ as in Definition \ref{RicciBD}, it is possible to show that 
an eigenfunction belongs to $W^{2,2}(X)$ and that it's gradient is bounded. This is proven in \cite{M14}  when $K=(n-1)$, but actually does not depend on $K$ being positive. We sketch briefly the main lines of the proof without the assumption that $K$ is positive. Corollary 2.4 in \cite{TM} states the following: 

\begin{proposition}
\label{logTub}
Let $X$ be a compact stratified space of dimension $n$, endowed with an iterated edge metric $g$.  Let $\varphi$ be an eigenfunction for the Laplacian and $\Sigma^{\eps}$ a tubular neighbourhood of the singular set $\Sigma$ of size $\eps >0$. 
Assume that for any $x \in X$, the tangent sphere $S_x$ is such that $\lambda_1(S_x)\geq (n-1)$. Then there exists a positive constant $C$ such that
\begin{equation*}
\norm{\nabla \varphi}_{L^{\infty}(X \setminus \Sigma^{\eps})}\leq C \sqrt{|\ln(\eps)|}.
\end{equation*}
\end{proposition}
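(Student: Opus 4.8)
The strategy is to estimate the gradient of an eigenfunction on the complement of a tubular neighbourhood of the singular set by combining elliptic regularity away from $\Sigma$ with a careful exhaustion argument controlled by the geometry of small geodesic balls described in Subsection \ref{subsubsec:tangentcones}. The key analytic input is a reverse Poincaré / Caccioppoli-type inequality applied at the scale of the distance to $\Sigma$: if $x\in X^{reg}$ with $d_g(x,\Sigma)=d$, then on the ball $B(x,d/2)$, which is diffeomorphic to a Euclidean ball with controlled metric distortion, one has an interior gradient estimate of the form
\begin{equation*}
\norm{\nabla\varphi}_{L^\infty(B(x,d/4))}^2\leq \frac{C}{d^{n+2}}\int_{B(x,d/2)}(|\nabla\varphi|^2+\lambda^2\varphi^2)\,dv_g,
\end{equation*}
where $\lambda$ is the eigenvalue; since $\varphi$ is bounded in $W^{1,2}$ and in $L^\infty$ and the measure is $n$-Ahlfors regular (Proposition \ref{doubling}), the right-hand side is bounded by $C d^{-2}$, hence $\norm{\nabla\varphi}_{L^\infty(B(x,d/4))}\leq C/d$. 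Integrating (or rather, summing) such pointwise bounds is not enough by itself because $1/d$ is not integrable in the relevant sense near a codimension-two stratum; the logarithmic factor appears precisely because the volume of the region where $d_g(\cdot,\Sigma)\in[2^{-k-1},2^{-k}]$ decays like $2^{-2k}$ (codimension two), so summing $2^k\cdot(2^{-2k})^{?}$ contributions in an $L^\infty$–telescoping argument over dyadic annuli $\{2^{-k-1}\le d_g(\cdot,\Sigma)\le 2^{-k}\}$ produces a sum $\sum_{k\le |\ln\eps|}$ of uniformly bounded terms, giving the $\sqrt{|\ln\eps|}$ growth after taking a square root of the accumulated energy.

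More precisely, I would argue as follows. First, reduce to a neighbourhood of a single singular stratum and, using the chart $\psi_x$ and the estimate \eqref{g_C}, work on truncated cones $C_{[0,r)}(S_x)$ where the metric is a perturbation of an exact cone metric $dr^2+r^2h_x$; the hypothesis $\lambda_1(S_x)\ge n-1$ is what guarantees (via separation of variables on the cone and the resulting Bessel-type ODEs) that $\varphi$ and its gradient do not blow up too fast as $r\to 0$. Second, establish the interior gradient bound $\norm{\nabla\varphi}_{L^\infty(B(x,d/4))}\le C/d$ stated above by standard elliptic theory on the (uniformly controlled) Riemannian ball. Third, set up a dyadic decomposition $A_k=\{x: 2^{-k-1}\le d_g(x,\Sigma)< 2^{-k}\}$ and combine the pointwise bound on each $A_k$ with the volume estimate $v_g(A_k)\le C\,2^{-2k}$ (which follows from $\Sigma$ having codimension at least two, exactly as in the volume computations of Subsection \ref{volume-measure}); carrying the estimate through the iterated cone structure — i.e. applying the same reasoning on the link $S_x$, whose first eigenvalue bound is inherited — is what makes the constant uniform. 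Summing over $0\le k\le \log_2(1/\eps)$ yields the bound $\norm{\nabla\varphi}_{L^\infty(X\setminus\Sigma^\eps)}\le C\sqrt{|\ln\eps|}$.

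The main obstacle is the iteration over strata of all codimensions: one must ensure that the constant $C$ does not degenerate as one passes to the links, which requires that the hypothesis ``$\lambda_1(S_x)\ge n-1$ for all $x$'' propagates appropriately to the links $Z_j$ and their own tangent spheres. This is where the assumption on the codimension-two stratum (angle $\alpha\le 2\pi$, equivalently $\operatorname{diam}(\s^1,a_x^2 d\theta^2)\le\pi$, equivalently $\lambda_1\ge 1$ on the circular link) is used in an essential way: without it, the logarithmic estimate fails at the codimension-two stratum, since the separation-of-variables analysis on $C(\s^1_{a})$ with $a>1$ allows eigenfunction gradients that blow up polynomially in $1/r$ rather than logarithmically. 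The remaining steps — the Caccioppoli estimate, the dyadic volume bound, the telescoping sum — are routine once the scale-invariant setup is in place, and I would only sketch them, referring to \cite{M14} for the full details of the cone analysis.
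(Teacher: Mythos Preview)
Your approach has a genuine gap, and it lies in the passage from the pointwise interior bound to the global $L^\infty$ estimate.  The Caccioppoli--type inequality you state, combined with Ahlfors regularity and the $L^\infty$ bound on $\varphi$, indeed gives
\[
|\nabla\varphi(x)|\leq \frac{C}{d_g(x,\Sigma)},
\]
but this is the \emph{only} pointwise bound your argument produces.  On $X\setminus\Sigma^\eps$ this yields $\norm{\nabla\varphi}_{L^\infty}\le C/\eps$, which is far worse than $C\sqrt{|\ln\eps|}$.  Your ``dyadic telescoping'' paragraph conflates $L^2$ and $L^\infty$: the volume decay $v_g(A_k)\le C\,2^{-2k}$ is relevant for integrated quantities, but the supremum norm over a union of annuli is simply the maximum of the suprema, so no amount of summing over $k$ improves the bound on the worst annulus.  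The sentence ``taking a square root of the accumulated energy'' produces an $L^2$ estimate (which is in any case trivial, since $\varphi\in W^{1,2}(X)$ a priori), not the $L^\infty$ estimate claimed.  Relatedly, your hypothesis $\lambda_1(S_x)\ge n-1$ is never actually used in the estimates: the interior gradient bound you derive holds on any manifold regardless of the cone section, so your scheme cannot be detecting the angle condition at all.

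The paper does not give a proof here; the result is quoted from \cite{TM} (Corollary~2.4) and \cite{M14}.  As the paper indicates in the paragraphs following the statement, the actual argument is of a different nature: one applies the Bochner formula on $X^{reg}$ to see that $u=|\nabla\varphi|^2$ satisfies a differential inequality of the form $\Delta_g u\leq a u + b$, and then runs a Moser iteration against the cut-off functions $\rho_\eps$ of Lemma~\ref{cutoff}.  The hypothesis $\lambda_1(S_x)\ge n-1$ enters through the separation of variables on the model cones --- it guarantees that the radial exponent of every nontrivial mode is at least $1$, hence that the local Sobolev and gradient estimates on the cone are uniform --- and the factor $\sqrt{|\ln\eps|}$ emerges from the iteration itself, essentially from the number of dyadic scales between $\eps$ and the injectivity radius.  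If you want to rewrite your sketch so that it matches the intended argument, replace the interior elliptic estimate on balls by Bochner's inequality for $|\nabla\varphi|^2$, and replace the annulus summation by a genuine Moser iteration whose cut-offs are the $\rho_\eps$; the logarithm then appears for the right reason.
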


If the iterated edge metric $g$ is such that the singular Ricci curvature is bounded below by $K \in \R$, as in Definition \ref{RicciBD}, then the assumption of the previous Proposition holds. 
Indeed, we know that $\ric_g \geq K$ on $X^{\tiny{reg}}$ implies that the singular Ricci curvature of each link $(Z_j,k_j)$ is bounded below by $(\mbox{dim}(Z_j)-1)$. 
As a consequence, when $x$ belongs to a stratum of codimension larger than two, the tangent sphere $(S_x,h_x)$ has singular Ricci curvature bounded below by $(n-2)$.
Then the Lichnerowicz theorem in \cite{M14} implies that $\lambda_1(S_x)\geq (n-1)$. Since we also assumed that the angles along the stratum of
codimension 2 are smaller than $2\pi$, we have the same lower bound for $\lambda_1(S_x)$ when $x$ belongs to $\Sigma^{n-2}$. Therefore, we can apply Proposition \ref{logTub} and get the estimate on the gradient of eigenfunctions. 

In the proof of Lichnerowicz theorem in \cite{M14}, we also deduce that $|\nabla \varphi|$ belongs to $W^{1,2}(X)\cap L^{\infty}(X)$. This is done by using the Bochner inequality on the regular set and by constructing the appropriate family of cut-off functions:

\begin{lemma}
\label{cutoff}
Let $X$ be a stratified space and $g$ an iterated edge metric with singular Ricci curvature bounded below by $K \in \R$. 

Then for any $\varepsilon >0$ there exists a family of cut-off functions $\cutoff \in C^{\infty}_0(X^{\tiny{reg}})$, which satisfy the following properties:
\begin{itemize}
\smallskip
\item[1.] $0 \leq \cutoff \leq 1$ and $\cutoff$ vanishes on a tubular neighbourhood of the singular set;
\smallskip
\item[2.] The norm in $L^2(X)$ of $|\nabla\cutoff|$ and the norm in $L^1(X)$ of $|\Delta_g\cutoff|$ converge to zero when $\eps$ tends to zero. 
\end{itemize} 
\end{lemma}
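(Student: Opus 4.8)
\emph{The plan.} The lemma is a capacity-type estimate whose only geometric input is that every singular stratum $\Sigma^j$ has codimension $k_j=n-j\ge 2$; accordingly I would build $\cutoff$ out of the radial functions $\rho_j$ by a logarithmic cut-off procedure. Fix a connected component $\widetilde\Sigma^j$, its tubular neighbourhood $\mathcal U_j$ and radial function $\rho_j\colon\mathcal U_j\to[0,1]$, and recall (cf. Section~\ref{subsubsec:tangentcones} and \eqref{g_C}) that on $\mathcal U_j\cap X^{reg}$ the metric $g$ is controlled by the model $g_{0,j}=\pi_j^*h+d\rho_j^2+\rho_j^2k_j$, for which a direct computation gives $|\nabla\rho_j|_{g_{0,j}}\equiv 1$ and $\Delta_{g_{0,j}}\rho_j=(k_j-1)/\rho_j$, while the volume comparison \eqref{vol} yields $dv_g\le C\,\rho_j^{k_j-1}\,d\rho_j\,d\sigma$ in cone-bundle coordinates. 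On $[0,1]$ I would choose a profile $F_\eps$ equal to $0$ on $[0,\eps^2]$, equal to the logarithmic interpolation $t\mapsto(\ln t-2\ln\eps)/(-\ln\eps)$ on $[\eps^2,\eps]$, and equal to $1$ on $[\eps,1]$, then mollify $F_\eps$ in two windows of respective widths $\sim\eps^2$ and $\sim\eps$ around the corners $t=\eps^2$ and $t=\eps$ so that $F_\eps$ becomes smooth with $0\le F_\eps\le 1$, $|F_\eps'(t)|\le C/(t|\ln\eps|)$ and $|F_\eps''(t)|\le C/(t^2|\ln\eps|)$. Since $F_\eps(\rho_j)\equiv 1$ near $\partial\mathcal U_j$ for $\eps$ small, it extends by $1$ to all of $X$, and I would set $\cutoff=\prod_j F_\eps(\rho_j)$, the product running over the finitely many components of all singular strata. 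By construction $\cutoff\in C^\infty_0(X^{reg})$, $0\le\cutoff\le 1$, and $\cutoff$ vanishes on the $\eps^2$-tube around $\Sigma$, which is property~1.

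\emph{The two estimates.} For the gradient, $|\nabla\cutoff|_g$ is supported on $\bigcup_j\{\eps^2\le\rho_j\le\eps\}$ and bounded there by $C\sum_j|F_\eps'(\rho_j)|$, so
\[
\int_X|\nabla\cutoff|_g^2\,dv_g\ \le\ \frac{C}{|\ln\eps|^2}\sum_j\int_{\eps^2}^{\eps}t^{k_j-3}\,dt ,
\]
and since $k_j\ge 2$ the inner integral is $\le|\ln\eps|$ when $k_j=2$ and $\le\eps^{k_j-2}$ when $k_j\ge 3$, so the right-hand side is $O(1/|\ln\eps|)\to 0$. For the Laplacian one writes $\Delta_g(F_\eps(\rho_j))=F_\eps''(\rho_j)|\nabla\rho_j|_g^2+F_\eps'(\rho_j)\Delta_g\rho_j$. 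On the bulk annulus $\{\eps^2\le\rho_j\le\eps\}$, $F_\eps$ is exactly the logarithmic profile and the principal model term is $F_\eps''+\tfrac{k_j-1}{\rho_j}F_\eps'=(k_j-2)/(\rho_j^2|\ln\eps|)$, which \emph{vanishes identically when $k_j=2$} — this is the point of the logarithmic interpolation; what remains is that term together with the errors coming from $|\nabla\rho_j|_g^2-1$ and $\Delta_g\rho_j-(k_j-1)/\rho_j$, and integrating each against $\rho_j^{k_j-1}\,d\rho_j$ over $[\eps^2,\eps]$ produces contributions of size $O(\eps^{k_j-2}/|\ln\eps|)$ (which is $0$ if $k_j=2$), $O(\eps^{\alpha}/|\ln\eps|)$ and $O(\eps/|\ln\eps|)$, all $\to 0$. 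On each mollification window one has $|F_\eps''|\lesssim 1/(t^2|\ln\eps|)$, $\Delta_g\rho_j\lesssim 1/t$ and $dv_g\lesssim t^{k_j-1}\,dt$ over a window of width $\sim t$, so the contribution is $\lesssim\eps^{k_j-2}/|\ln\eps|\to 0$ near $t=\eps$ and $\lesssim\eps^{2(k_j-2)}/|\ln\eps|\to 0$ near $t=\eps^2$. Summing over $j$, and bounding the cross terms of the product, $\langle\nabla F_\eps(\rho_j),\nabla F_\eps(\rho_l)\rangle$ with $j\neq l$, in $L^1$ by $\|\nabla F_\eps(\rho_j)\|_{L^2}\|\nabla F_\eps(\rho_l)\|_{L^2}=O(1/|\ln\eps|)$ via Cauchy--Schwarz, one gets $\|\Delta_g\cutoff\|_{L^1(X)}\to 0$, which is property~2.

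\emph{Main obstacle.} The delicate point is property~2 along the codimension-two stratum $\Sigma^{n-2}$: there the volume of an $\eps$-tube is only of order $\eps^2$, precisely critical for the gradient term, and a naive (e.g. linear) cut-off leaves $\|\Delta_g\cutoff\|_{L^1}$ bounded away from $0$; one must use the logarithmic profile and exploit that $\log$ of the radial variable is harmonic for the two-dimensional model cone so that the would-be non-integrable term $F_\eps''+\tfrac1{\rho}F_\eps'$ cancels on the bulk, then check that the mollification at the two joints and the discrepancy between $g$ and $g_{0,n-2}$, controlled by the iterated-edge exponent $\alpha$, contribute only $O(1/|\ln\eps|)$. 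A secondary technical point is to derive the estimate on $\Delta_g\rho_j$ near $\Sigma^j$ from the closeness of $g$ to $g_{0,j}$. The full details are carried out in \cite{M14} (see also \cite{TM}), and reproducing them in the present generality, with no sign assumption on $K$, is routine once the logarithmic cut-off above is in place.
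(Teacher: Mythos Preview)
Your proposal is correct and is precisely the logarithmic cut-off construction carried out in \cite{M14} and \cite{TM}, to which the paper itself defers for the proof of this lemma (the paper gives no independent argument). The key point you single out---that along $\Sigma^{n-2}$ the profile $F_\eps=\log$ makes the model term $F_\eps''+\rho^{-1}F_\eps'$ vanish identically, so only the small $O(\eps^\alpha)$ perturbation and the mollification windows contribute---is exactly the mechanism used there; your identification of the control of $\Delta_g\rho_j$ as the residual technical step is also accurate and is handled in those references.
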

 
\noindent This argument does not depend on $K$ being positive. For the details of the construction, see \cite{M14} or \cite{TM}. We summarize in the following: 

\begin{proposition}
Let $X$ be an $n$-dimensional stratified space endowed with the iterated edge metric $g$. Assume that $g$ has singular Ricci curvature bounded below by $K \in \R$. Then any eigenfunction $\varphi$ of the Laplacian belongs to $W^{2,2}(X)\cap L^{\infty}(X)$ and the gradient $\nabla \varphi$ has bounded norm on $X$.
\end{proposition}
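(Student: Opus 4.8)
The plan is to prove this summarizing proposition by combining the two regularity ingredients already assembled in the excerpt: the gradient bound of Proposition~\ref{logTub} and the cut-off family of Lemma~\ref{cutoff}. Throughout, $\varphi$ denotes an eigenfunction with $\Delta_g\varphi=\lambda\varphi$, which we already know lies in $W^{1,2}(X)\cap L^\infty(X)$, and we work on the regular set $X^{\mathrm{reg}}$, where $g$ is a genuine smooth Riemannian metric and the classical Bochner formula
\begin{equation*}
\tfrac12\Delta_g|\nabla\varphi|^2=|\nabla^2\varphi|^2+(\nabla\Delta_g\varphi,\nabla\varphi)_g+\ric_g(\nabla\varphi,\nabla\varphi)
\end{equation*}
holds pointwise. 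Since we have verified that the hypothesis $\lambda_1(S_x)\ge n-1$ is satisfied at every $x\in X$ under Definition~\ref{RicciBD} (using the Lichnerowicz theorem of \cite{M14} on the tangent spheres together with the angle condition along $\Sigma^{n-2}$), Proposition~\ref{logTub} applies and gives $\|\nabla\varphi\|_{L^\infty(X\setminus\Sigma^\varepsilon)}\le C\sqrt{|\ln\varepsilon|}$; in particular $\nabla\varphi$ is bounded on all of $X^{\mathrm{reg}}$, hence on $X$ since $\Sigma$ is negligible. This is the ``$\nabla\varphi$ has bounded norm'' part of the claim, and it already shows $\varphi\in W^{1,\infty}(X)$.

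Next I would establish $\varphi\in W^{2,2}(X)$, i.e.\ that $|\nabla^2\varphi|\in L^2(X)$ and that $|\nabla\varphi|\in W^{1,2}(X)$. The idea is the standard integrated Bochner argument, made rigorous near the singular set by the cut-offs: multiply the Bochner identity by $\cutoff^2$ (with $\cutoff$ from Lemma~\ref{cutoff}), integrate over $X^{\mathrm{reg}}$, and integrate by parts — all boundary terms are legitimate because $\cutoff$ is compactly supported in $X^{\mathrm{reg}}$. One obtains
\begin{equation*}
\int_X \cutoff^2|\nabla^2\varphi|^2\,dv_g
= -\tfrac12\int_X |\nabla\varphi|^2\,\Delta_g(\cutoff^2)\,dv_g
+\lambda\int_X\cutoff^2|\nabla\varphi|^2\,dv_g
-\int_X\cutoff^2\,\ric_g(\nabla\varphi,\nabla\varphi)\,dv_g,
\end{equation*}
and each term on the right is controlled uniformly in $\varepsilon$: the Ricci term by $\ric_g\ge K$ and the $L^\infty$-bound on $\nabla\varphi$, the eigenvalue term likewise, and the term with $\Delta_g(\cutoff^2)=2\cutoff\Delta_g\cutoff+2|\nabla\cutoff|^2$ by property~2 of Lemma~\ref{cutoff} (the $L^1$-norm of $|\Delta_g\cutoff|$ and the $L^2$-norm of $|\nabla\cutoff|$ tend to $0$) together with the $L^\infty$-bound on $|\nabla\varphi|^2$. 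Letting $\varepsilon\to0$ and using monotone convergence ($\cutoff\uparrow1$) yields $\int_X|\nabla^2\varphi|^2\,dv_g<\infty$. Combined with the pointwise inequality $|\nabla|\nabla\varphi||\le|\nabla^2\varphi|$ (Kato), valid a.e.\ on $X^{\mathrm{reg}}$, this gives $|\nabla\varphi|\in W^{1,2}(X)$, and since it is also bounded we get $|\nabla\varphi|\in W^{1,2}(X)\cap L^\infty(X)$ and $\varphi\in W^{2,2}(X)\cap L^\infty(X)$. Finally $\varphi\in L^\infty(X)$ is part of the recalled regularity (boundedness of eigenfunctions from \cite{ACM12,M14}), so nothing further is needed there.

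The main obstacle — and the reason this is recalled as a nontrivial input rather than proved from scratch — is the gradient bound of Proposition~\ref{logTub}: producing a bound on $\|\nabla\varphi\|_{L^\infty(X\setminus\Sigma^\varepsilon)}$ that blows up only logarithmically in $\varepsilon$ requires the refined analysis near the codimension-two stratum from \cite{M14}, which is exactly where the hypothesis $\alpha_x\le2\pi$ enters (it is what guarantees $\lambda_1(S_x)\ge n-1$ there, and a cruder bound would not survive the integration against $\Delta_g(\cutoff^2)$). Once that bound is granted, the remaining work is the routine cut-off-assisted integration by parts sketched above; the only points requiring care are checking that the cut-offs can be chosen in $C^\infty_0(X^{\mathrm{reg}})$ with the stated $L^1$/$L^2$ decay (this is Lemma~\ref{cutoff}, proved in \cite{M14,TM} using that $\Sigma$ has codimension $\ge2$), and verifying the Kato inequality and the pointwise Bochner formula only on the smooth locus $X^{\mathrm{reg}}$, which is unproblematic. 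I would therefore present the proof as: (1) invoke Proposition~\ref{logTub} to get $\nabla\varphi\in L^\infty$; (2) run the cut-off Bochner estimate to get $\nabla^2\varphi\in L^2$; (3) apply Kato to conclude $|\nabla\varphi|\in W^{1,2}\cap L^\infty$, hence $\varphi\in W^{2,2}\cap L^\infty$.
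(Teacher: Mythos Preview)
There is a genuine gap in step~(1). Proposition~\ref{logTub} gives only
$\|\nabla\varphi\|_{L^\infty(X\setminus\Sigma^\varepsilon)}\le C\sqrt{|\ln\varepsilon|}$,
and this bound \emph{diverges} as $\varepsilon\to 0$: for a point $x$ at distance $\varepsilon$ from $\Sigma$ the best you obtain is $|\nabla\varphi(x)|\le C\sqrt{|\ln\varepsilon|}$. It therefore does \emph{not} yield $\nabla\varphi\in L^\infty(X^{\mathrm{reg}})$. Your step~(2) then invokes this non-existent $L^\infty$-bound to control $\int_X|\nabla\varphi|^2\,\Delta_g(\cutoff^2)\,dv_g$, so the argument as written is circular.

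The order of implications in the paper (following \cite{M14}) is the reverse of yours. The logarithmic estimate is not the output but the \emph{input} to the cut-off Bochner computation: on the support of $\nabla\cutoff$ and $\Delta_g\cutoff$ one has $|\nabla\varphi|^2\le C|\ln\varepsilon|$, and since $\|\Delta_g\cutoff\|_{L^1}$ and $\|\nabla\cutoff\|_{L^2}^2$ decay polynomially in $\varepsilon$ (codimension of $\Sigma$ is at least $2$), the products $|\ln\varepsilon|\cdot\|\Delta_g\cutoff\|_{L^1}$ and $|\ln\varepsilon|\cdot\|\nabla\cutoff\|_{L^2}^2$ still vanish in the limit. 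This is exactly the point of the ``$\sqrt{|\ln\varepsilon|}$'' in Proposition~\ref{logTub}; a polynomial blow-up would not be absorbed. One thus obtains $|\nabla\varphi|\in W^{1,2}(X)$ (and $\nabla^2\varphi\in L^2$) \emph{first}. The $L^\infty$-bound on $\nabla\varphi$ is established afterwards, by a Moser-type iteration applied to $|\nabla\varphi|^2$, which by the Bochner formula and $\ric_g\ge K$ satisfies a subsolution inequality on $X^{\mathrm{reg}}$; the same logarithmic control near $\Sigma$ is again what makes the boundary terms in the iteration harmless. So the correct scheme is: log bound $\Rightarrow$ cut-off Bochner goes through $\Rightarrow$ $|\nabla\varphi|\in W^{1,2}$ $\Rightarrow$ Moser iteration $\Rightarrow$ $\nabla\varphi\in L^\infty$, rather than log bound $\Rightarrow$ $L^\infty$ $\Rightarrow$ cut-off Bochner $\Rightarrow$ $W^{2,2}$.
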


In particular, eigenfunctions are Lipschitz functions for a compact stratified space satisfying Definition \ref{RicciBD}. Note that this approach does not apply in presence of angles larger than $2\pi$ along the stratum of codimension two. Indeed, when the angles are larger than $2\pi$, Theorem A in \cite{ACM14} implies that eigenfunctions are at most $\beta$-Hölder continuous with $\beta <1$.

The regularity of eigenfunctions and the existence of an appropriate family of cut-off functions allows us to prove the Bochner inequality for an eigenfunction $\phi$:

\begin{proposition}[Bochner inequality for eigenfunctions]
Let $(X^n,g)$ be a stratified space, whose iterated edge metric $g$ has singular Ricci curvature bounded below by $K \in \R$. 

Then for any $\varphi$ eigenfunction of the Laplacian $\Delta_g$ and $\psi \in \mathcal{D}(\Delta_g)\cap L^{\infty}(X)$ such that $\Delta_g \psi \in L^{\infty}(X)$ we have:
\begin{equation}
\label{Bochner1}-
\frac 12 \int_X \Delta_g \psi |d\varphi|^2 dv_g +\int_X \psi (\nabla(\Delta_g \varphi), \nabla \varphi)_g dv_g \geq \int_X \psi\left(K|d\varphi|^2+\frac{(\Delta_g\varphi)^2}{n}\right)dv_g. 
\end{equation}
\end{proposition}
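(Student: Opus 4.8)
The plan is to establish the Bochner inequality on the regular set $X^{\text{reg}}$ where everything is smooth, and then transfer the integral inequality to all of $X$ by multiplying with the cut-off functions $\rho_\varepsilon$ from Lemma~\ref{cutoff} and passing to the limit $\varepsilon\to 0$. On a smooth Riemannian manifold, the pointwise Bochner formula reads
$$
\frac12\Delta_g|d\varphi|^2 = |\nabla^2\varphi|^2 + \langle\nabla(\Delta_{\mathrm{an}}\varphi),\nabla\varphi\rangle + \ric_g(\nabla\varphi,\nabla\varphi),
$$
where here $\Delta_{\mathrm{an}}=-\Delta_g$ is the analyst's (negative) Laplacian; combined with the Cauchy--Schwarz bound $|\nabla^2\varphi|^2\ge (\Delta_g\varphi)^2/n$ and the hypothesis $\ric_g\ge K$ on $X^{\text{reg}}$, one gets the pointwise inequality
$$
\tfrac12\Delta_g|d\varphi|^2 \ge \tfrac{(\Delta_g\varphi)^2}{n} - \langle\nabla(\Delta_g\varphi),\nabla\varphi\rangle + K|d\varphi|^2
$$
on $X^{\text{reg}}$, valid in the classical sense since $\varphi$ is smooth there (elliptic regularity for the eigenfunction equation $\Delta_g\varphi=\lambda\varphi$).

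Next I would test this against $\psi\rho_\varepsilon$. Since $\psi\rho_\varepsilon$ is a bona fide admissible test function supported away from $\Sigma$ (here $\psi\in\mathcal D(\Delta_g)\cap L^\infty$ with $\Delta_g\psi\in L^\infty$, and $\rho_\varepsilon\in C_0^\infty(X^{\text{reg}})$), integration by parts on $X^{\text{reg}}$ is justified with no boundary terms, giving
\begin{align*}
-\tfrac12\int_X \Delta_g(\psi\rho_\varepsilon)\,|d\varphi|^2\,dv_g &+ \int_X \psi\rho_\varepsilon\,\langle\nabla(\Delta_g\varphi),\nabla\varphi\rangle_g\,dv_g \\
&\ge \int_X \psi\rho_\varepsilon\Big(K|d\varphi|^2 + \tfrac{(\Delta_g\varphi)^2}{n}\Big)dv_g.
\end{align*}
Then I expand $\Delta_g(\psi\rho_\varepsilon)=\rho_\varepsilon\Delta_g\psi + 2\langle\nabla\psi,\nabla\rho_\varepsilon\rangle + \psi\Delta_g\rho_\varepsilon$ and must show the two extra terms
$$
-\int_X \langle\nabla\psi,\nabla\rho_\varepsilon\rangle_g\,|d\varphi|^2\,dv_g \quad\text{and}\quad -\tfrac12\int_X \psi\,\Delta_g\rho_\varepsilon\,|d\varphi|^2\,dv_g
$$
vanish as $\varepsilon\to0$. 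Using $|d\varphi|\in L^\infty(X)$ and $\psi\in L^\infty$ (hence $\nabla\psi\in L^2$), the first term is bounded by $C\|\nabla\psi\|_{L^2}\|\nabla\rho_\varepsilon\|_{L^2}\to0$ by property~2 of Lemma~\ref{cutoff}, and the second by $C\|\psi\|_\infty\||d\varphi|\|_\infty^2\|\Delta_g\rho_\varepsilon\|_{L^1}\to0$ likewise. The remaining terms converge by dominated convergence since $\rho_\varepsilon\to1$ boundedly a.e.\ (recall $v_g(\Sigma)=0$) and the integrands $|d\varphi|^2\Delta_g\psi$, $\psi\langle\nabla(\Delta_g\varphi),\nabla\varphi\rangle=\lambda\psi|d\varphi|^2$, $\psi|d\varphi|^2$, $\psi(\Delta_g\varphi)^2=\lambda^2\psi\varphi^2$ are all in $L^1(X)$ by the regularity of $\varphi$ collected in the preceding proposition ($\varphi\in W^{2,2}\cap L^\infty$, $|\nabla\varphi|\in L^\infty$) and the assumptions on $\psi$; note $\nabla(\Delta_g\varphi)=\lambda\nabla\varphi$ makes that term especially simple.

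\textbf{The main obstacle} is the justification of the integration by parts on $X^{\text{reg}}$ and the $L^1$-integrability needed to pass to the limit: one must be sure that no singular contribution is lost when replacing integrals over $X$ by integrals over $X^{\text{reg}}$, which is exactly what the cut-off family of Lemma~\ref{cutoff} is designed to control, and that $|d\varphi|$ is genuinely bounded rather than merely $L^2$ — this is where the hypothesis that the angle along $\Sigma^{n-2}$ is at most $2\pi$ enters decisively, via Proposition~\ref{logTub} and the Lichnerowicz-type spectral bound $\lambda_1(S_x)\ge n-1$. Once the test function is legitimately supported in the regular set the calculation is the classical smooth one; the delicacy is entirely in the approximation argument and the a priori regularity of eigenfunctions, both of which are supplied by the results quoted above.
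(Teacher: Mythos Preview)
Your proposal is correct and follows essentially the same route as the paper's proof: pointwise Bochner on $X^{\mathrm{reg}}$, multiplication by $\rho_\varepsilon\psi$, integration by parts moving $\Delta_g$ onto $\rho_\varepsilon\psi$, the product-rule expansion, and the vanishing of the two error terms via $\|\nabla\rho_\varepsilon\|_{L^2}\to 0$ and $\|\Delta_g\rho_\varepsilon\|_{L^1}\to 0$ together with $|d\varphi|\in L^\infty$. Two minor sign slips to fix (they do not affect the argument): with the paper's \emph{positive} Laplacian $\Delta_g$, the pointwise Bochner inequality should read $-\tfrac12\Delta_g|d\varphi|^2\ge\dots$ (not $+\tfrac12$), and the Leibniz rule is $\Delta_g(\psi\rho_\varepsilon)=\rho_\varepsilon\Delta_g\psi+\psi\Delta_g\rho_\varepsilon-2\langle\nabla\psi,\nabla\rho_\varepsilon\rangle$; after these corrections your integral inequality and limit argument match the paper line by line.
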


\begin{proof}
Since $\Delta_g \varphi=\lambda \varphi$, and since the Bochner formula holds on the regular set $X^{\tiny{reg}}$ we have:
\begin{equation*}
-\frac{1}{2}\Delta_g |d\varphi|^2+\lambda|d\varphi|^2= \ric_g(d\varphi,d\varphi)+|\nabla d\varphi|^2 \ \mbox{ on }\  X^{\tiny{reg}}.
\end{equation*}
Note that $\varphi$ is smooth on the regular set $X^{\tiny{reg}}$, and therefore $\Delta_g|d\varphi|^2$ is well-defined on $X^{\tiny{reg}}$.

Now consider $\psi \in \mathcal{D}(\Delta_g)\cap L^{\infty}(X)$ such that $\Delta_g \psi \in L^{\infty}(X)$ and for $\eps >0$ choose a cut-off function $\rho_{\eps}$, $0 \leq \cutoff \leq 1$, vanishing on a tubular neighbourhood of the singular set, as in Lemma \ref{cutoff}. We multiply the previous equality by $\cutoff \psi$ and then integrate on $X$: 
\begin{align*}
-\frac{1}{2}\int_X \cutoff \psi \Delta_g |d\varphi|^2 dv_g +\int_X \cutoff \psi \lambda |d\varphi|^2 dv_g & =\int_X \cutoff \psi (\ric_g(d\varphi, d\varphi)+|\nabla d\varphi|^2) dv_g.
\end{align*}
As for the right-hand side, we use-Cauchy-Schwarz inequality and and the fact that $\ric_g\geq K$ on the regular set in order to get:
\begin{equation*}
\int_X \cutoff \psi (\ric_g(d\varphi, d\varphi)+|\nabla d\varphi|^2) dv_g \geq \int_X \cutoff \psi  \left(K|d\varphi|^2+\frac{(\Delta_g \varphi)^2}{n}\right)dv_g.
\end{equation*}
This converges to the right-hand side of \ref{Bochner1} when $\eps$ goes to zero. As for the second term in the left-hand side, we have:
\begin{equation*}
\int_X \cutoff \psi \lambda |d\varphi|^2 dv_g=\int_X \cutoff \psi (\nabla(\Delta_g\varphi), \nabla \varphi)_g dv_g, 
\end{equation*}
which also converges to the second term in the left-hand side of the Bochner inequality \ref{Bochner1}. It remains to study the first term in the left-hand side. By integrating by parts we obtain:
\begin{align}
\label{IPP}
\int_X \cutoff \psi \Delta_g |d\varphi|^2 dv_g  =&\int_X \cutoff \Delta_g\psi |d\varphi|^2 dv_g+\int_X  \psi \Delta_g\cutoff |d\varphi|^2\\
&\ \ \ \ \ - 2\int_X (d\psi, d\cutoff)_g |d\varphi|^2 dv_g \nonumber . 
\end{align}
The first term in the right hand side in this last identity converges to the first term in the right-hand side of \ref{Bochner1} when $\eps$ goes to zero, then we need to show that the other two terms tend to zero as $\eps$ goes to zero. Consider the second term in the right-hand side of \ref{IPP}. Since $\psi$ and $|d\varphi|$ belong to $L^{\infty}(X)$ we have:
\begin{equation*}
\left|\int_X  \psi \Delta_g\cutoff |d\varphi|^2 dv_g \right| \leq c\int_X|\Delta_g \cutoff| |d\varphi|^2 dv_g \leq c_1 \int_X |\Delta_g \cutoff| dv_g.
\end{equation*}
Now, $\cutoff$ is constructed in such a way that this last integral converges to zero as $\eps$ goes to zero. As for the last term in \ref{IPP} we can again use that $|d\varphi|$ is bounded and the Cauchy-Schwarz inequality in order to get:
\begin{equation*}
\left| \int_X (d\psi, d\cutoff)_g |d\varphi|^2 dv_g \right| \leq c_1 \left(\int_X |d\psi|^2 dv_g \right)^{\frac 12}\left(\int_X |d\cutoff|^2 dv_g\right)^{\frac 12},
\end{equation*} 
and $\cutoff$ is chosen in such a way that the norm of its gradient in $L^2(X)$ tends to zero as $\eps$ goes to zero. As a consequence, we get the desired Bochner inequality. 
\end{proof}

%\noindent \emph{Remark:} The same works if $\varphi \in W^{1,2}(X)\cap L^{\infty}(X)$ satisfies an equation $\Delta_g \varphi= F(\varphi)$ for a locally Lipschitz function $F$.

\begin{proposition}[Finite linear combinations] Under the same assumptions on $X$, $g$ and $\psi$, consider a finite linear combination of eigenfunctions: 
$$\varphi=\sum_{k=1}^N a_k \varphi_k.$$ 
Then the Bochner inequality \ref{Bochner1} holds for $\varphi$. 
\end{proposition}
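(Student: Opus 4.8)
The plan is to reduce the statement to the single-eigenfunction case already established. First I would observe that every regularity property used there is stable under finite linear combinations: since both $\varphi=\sum_{k=1}^N a_k\varphi_k$ and $\Delta_g\varphi=\sum_{k=1}^N a_k\lambda_k\varphi_k$ are finite linear combinations of eigenfunctions, the propositions above give $\varphi,\Delta_g\varphi\in W^{2,2}(X)\cap L^{\infty}(X)$ with $\nabla\varphi$ and $\nabla\Delta_g\varphi$ bounded on $X$; moreover $\varphi$ is smooth on $X^{\tiny{reg}}$ by elliptic regularity, hence so is $|d\varphi|^2$. Consequently $|d\varphi|^2\in L^{\infty}(X)$, $\Delta_g\varphi\in W^{1,2}(X)$, and $(\nabla\Delta_g\varphi,\nabla\varphi)_g\in L^1(X)$, so each integral occurring in \ref{Bochner1} is well defined.

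Next I would write the pointwise Bochner formula for the smooth function $\varphi$ on the regular set,
$$-\frac12\Delta_g|d\varphi|^2+(\nabla\Delta_g\varphi,\nabla\varphi)_g=\ric_g(d\varphi,d\varphi)+|\nabla d\varphi|^2\ \mbox{ on }\ X^{\tiny{reg}},$$
which is exactly the identity used for a single eigenfunction once $\lambda|d\varphi|^2$ there is replaced by $(\nabla\Delta_g\varphi,\nabla\varphi)_g$ (the two agree when $\Delta_g\varphi=\lambda\varphi$ since then $\nabla\Delta_g\varphi=\lambda\nabla\varphi$). Then, as in the proof for a single eigenfunction, I would multiply by $\cutoff\psi$ with $\cutoff\in C^{\infty}_0(X^{\tiny{reg}})$ the cut-off family of Lemma \ref{cutoff}, integrate over $X$, and let $\eps\to 0$.

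The passage to the limit is term by term and identical to the eigenfunction case: on the right-hand side, $\ric_g\geq K$ on $X^{\tiny{reg}}$ together with the Cauchy--Schwarz inequality $|\nabla d\varphi|^2\geq(\Delta_g\varphi)^2/n$ yields a lower bound by $\int_X\cutoff\psi\bigl(K|d\varphi|^2+(\Delta_g\varphi)^2/n\bigr)dv_g$, which converges to the right-hand side of \ref{Bochner1} by dominated convergence (using $0\le\cutoff\le1$, $\cutoff\to1$, and $L^1$-integrability of the integrand); the term $\int_X\cutoff\psi(\nabla\Delta_g\varphi,\nabla\varphi)_g dv_g$ converges to $\int_X\psi(\nabla\Delta_g\varphi,\nabla\varphi)_g dv_g$ for the same reason; and integrating the remaining term by parts exactly as in \ref{IPP} (legitimate because $\cutoff$ is compactly supported in $X^{\tiny{reg}}$, where $|d\varphi|^2$ and $\cutoff$ are smooth) produces one term converging to $-\tfrac12\int_X\Delta_g\psi|d\varphi|^2 dv_g$ together with two remainders bounded respectively by $c\,\|\Delta_g\cutoff\|_{L^1(X)}$ and by $c\,\|d\psi\|_{L^2(X)}\|d\cutoff\|_{L^2(X)}$ (using $\psi,|d\varphi|^2\in L^{\infty}(X)$ and Cauchy--Schwarz), both of which vanish as $\eps\to 0$ by Lemma \ref{cutoff}. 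Collecting these limits gives \ref{Bochner1} for $\varphi$.

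I do not expect a serious obstacle here: the argument is essentially a repetition of the single-eigenfunction proof. The one point that genuinely differs is that $\Delta_g\varphi$ is no longer a multiple of $\varphi$, so one must keep the term $(\nabla\Delta_g\varphi,\nabla\varphi)_g$ as it stands and separately verify that $\Delta_g\varphi$ has the regularity needed for that pairing to be an honest $L^1$ function stable under the limit; as noted above, this follows at once from linearity and the single-eigenfunction estimates.
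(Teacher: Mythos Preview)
Your proposal is correct and follows essentially the same route as the paper: observe that a finite linear combination of eigenfunctions enjoys the same regularity as a single eigenfunction, write the pointwise Bochner identity on $X^{\tiny{reg}}$ with the term $(\nabla\Delta_g\varphi,\nabla\varphi)_g$ in place of $\lambda|d\varphi|^2$, multiply by $\cutoff\psi$, integrate, and pass to the limit using Lemma \ref{cutoff} exactly as in the single-eigenfunction case. Your write-up is in fact slightly more explicit than the paper's (you spell out that $\Delta_g\varphi$ is again a finite combination of eigenfunctions, hence has bounded gradient, and you invoke dominated convergence by name), but the argument is the same.
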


\begin{proof}
Observe that $\varphi$ has the same regularity as an eigenfunction, meaning that $\varphi$ belongs to $\mathcal{D}(\Delta_g) \cap L^{\infty}(X)$, its Laplacian $\Delta_g\varphi$ and gradient $|d\varphi|$ are bounded, and it is smooth on $X^{\tiny{reg}}$. Moreover, the Bochner formula holds on the regular set $X^{\tiny{reg}}$; we have then:
$$-\frac 12 \Delta_g |d\varphi|^2+(\nabla(\Delta_g \varphi), \nabla\varphi)_g=\ric_g(d\varphi,d\varphi)+|\nabla d\varphi|^2 \ \mbox{ on }\ X^{\tiny{reg}}.$$
As we did before, we multiply this equality by $\cutoff \psi$ and integrate on $X$:
\begin{align}
\label{Bochner2}
&-\frac 12 \int_X \cutoff \psi \Delta_g|d\varphi|^2 dv_g + \int_X \cutoff \psi (\nabla(\Delta_g \varphi), \nabla \varphi)_g dv_g \nonumber\\
&\hspace{5cm} = \int_X \cutoff \psi \left(\ric_g(d\varphi,d\varphi)+|\nabla d\varphi|^2 \right) dv_g
\end{align}
The right-hand side of this equality is bounded by below by: 
\begin{equation*}
\int_X \cutoff \psi \left( K|d\varphi|^2+\frac{(\Delta_g \varphi)^2}{n} \right) dv_g, 
\end{equation*}
which converges to the right-hand side of the desired Bochner inequality when $\eps$ tends to zero. The second term in the left-hand side of \ref{Bochner2} also converges to the corresponding term in the Bochner inequality, since all the quantities playing here are bounded. It remains to study the first term in the left-hand side of \ref{Bochner2}. We decompose it as before by integrating by parts; since $|d\varphi|$ is bounded, we can apply the same argument as before to get that, when $\eps$ goes to zero, the first term in the left-hand side of \ref{Bochner2} tends to: 
$$\int_X \Delta_g\psi |d\varphi|^2dv_g.$$
This concludes the proof and proves that the Bochner inequality holds for finite linear combinations of eigenfunctions. 
\end{proof}

\begin{proposition}
Let $X$ be a compact stratified space of dimension $n$, endowed with iterated edge metric $g$ with singular Ricci curvature bounded below by $K \in \R$.

\noindent Then for all functions $\phi \in \mathcal{D}(\Delta_g)$ with $\Delta_g \phi \in W^{1,2}(X)$ and all $\psi\in \mathcal{D}(\Delta_g)\cap L^{\infty}(X)$, with $\psi \geq 0$, bounded gradient $|\nabla \psi|$ and Laplacian $\Delta_g \psi$, we have
\begin{equation*}
-\frac 12 \int_X \Delta_g \psi |d\phi|^2 dv_g +\int_X \psi (\nabla(\Delta_g \phi), \nabla \phi)_g dv_g \geq \int_X \psi\left(K|d\phi|^2dv_g +\frac{(\Delta_g\phi)^2}{n}\right)dv_g. 
\end{equation*}
\end{proposition}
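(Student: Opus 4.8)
The plan is to reduce to the case of finite linear combinations of eigenfunctions, which has just been settled, by a spectral truncation argument. Let $\{\varphi_k\}_{k\ge 1}$ be an orthonormal basis of $L^2(X,v_g)$ consisting of eigenfunctions of $\Delta_g$, with $\Delta_g\varphi_k=\lambda_k\varphi_k$, and write $\phi=\sum_{k\ge1}a_k\varphi_k$ with $a_k=\int_X\phi\,\varphi_k\,dv_g$. Set $\phi_N=\sum_{k=1}^N a_k\varphi_k$. Each $\phi_N$ is a finite linear combination of eigenfunctions, and the test function $\psi$ satisfies in particular $\psi\in\mathcal{D}(\Delta_g)\cap L^\infty(X)$ with $\Delta_g\psi\in L^\infty(X)$, so the previous proposition applies and gives, for every $N$, the inequality \ref{Bochner1} with $\phi_N$ in place of $\varphi$. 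The remaining task is to pass to the limit $N\to\infty$ in each term.

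I would then verify the required convergences. Since $\phi\in\mathcal{D}(\Delta_g)\subset W^{1,2}(X)=\mathcal{D}(\mathcal{E})$, the partial sums $\phi_N$ converge to $\phi$ in $W^{1,2}(X)$, hence $d\phi_N\to d\phi$ in $L^2$ and $|d\phi_N|^2\to|d\phi|^2$ in $L^1$; combined with $\Delta_g\psi\in L^\infty(X)$ and $K\psi\in L^\infty(X)$ this handles the term $-\tfrac12\int_X\Delta_g\psi\,|d\phi_N|^2\,dv_g$ and the term $K\int_X\psi\,|d\phi_N|^2\,dv_g$. Similarly $\Delta_g\phi_N=\sum_{k=1}^N a_k\lambda_k\varphi_k\to\Delta_g\phi$ in $L^2$, so $(\Delta_g\phi_N)^2\to(\Delta_g\phi)^2$ in $L^1$ and, $\psi$ being bounded, $\int_X\psi(\Delta_g\phi_N)^2/n\,dv_g\to\int_X\psi(\Delta_g\phi)^2/n\,dv_g$. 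The one term where the extra hypothesis $\Delta_g\phi\in W^{1,2}(X)$ is used is $\int_X\psi\,(\nabla(\Delta_g\phi_N),\nabla\phi_N)_g\,dv_g$: here $\Delta_g\phi_N$ is exactly the $N$-th spectral truncation of $\Delta_g\phi$, and since $\Delta_g\phi\in\mathcal{D}(\mathcal{E})$ the identity $\mathcal{E}(h)=\sum_k\lambda_k\bigl(\int_X h\,\varphi_k\,dv_g\bigr)^2$ shows these truncations converge to $\Delta_g\phi$ in $W^{1,2}(X)$, so $\nabla(\Delta_g\phi_N)\to\nabla(\Delta_g\phi)$ in $L^2$; together with $\nabla\phi_N\to\nabla\phi$ in $L^2$ and $\psi\in L^\infty(X)$ this term converges as well. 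Passing to the limit in \ref{Bochner1} for $\phi_N$ then yields the stated inequality for $\phi$.

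The argument is essentially soft: the analytic substance is entirely contained in the Bochner inequality for finite linear combinations of eigenfunctions and in the $W^{2,2}(X)\cap L^\infty(X)$ regularity and boundedness of gradients of eigenfunctions, which is what makes each $\phi_N$ an admissible test function. The only point needing care is the convergence $\nabla(\Delta_g\phi_N)\to\nabla(\Delta_g\phi)$ in $L^2$, which is where the hypothesis $\Delta_g\phi\in W^{1,2}(X)$ is genuinely used and which relies on the spectral decomposition of $\mathcal{E}$ together with the discreteness of the spectrum of $\Delta_g$ recalled in Section~\ref{sec-pre-stra}. No further analysis on the stratified space is needed here; the last step, namely dropping the assumption that $\psi$ has bounded gradient to reach the full weak Bochner inequality \ref{BE}, will be carried out separately using the ultracontractivity of the heat semigroup (Lemma~\ref{ultracontractive}).
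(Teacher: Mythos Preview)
Your argument is correct and follows the same overall strategy as the paper: approximate $\phi$ by finite linear combinations of eigenfunctions, invoke the previously proved inequality, and pass to the limit. The one genuine difference lies in how you treat the cross term $\int_X\psi\,(\nabla(\Delta_g\phi_N),\nabla\phi_N)_g\,dv_g$. The paper integrates by parts to rewrite it as
\[
\int_X \psi (\Delta_g u_i)^2\,dv_g - \int_X \Delta_g u_i\,(\nabla u_i,\nabla\psi)_g\,dv_g,
\]
thereby eliminating $\nabla(\Delta_g u_i)$ at the cost of introducing $\nabla\psi$; this is precisely where the hypothesis $|\nabla\psi|\in L^\infty$ is used in the paper. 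You instead exploit that the specific approximants $\phi_N$ are spectral truncations, so $\Delta_g\phi_N$ is the spectral truncation of $\Delta_g\phi\in W^{1,2}(X)$ and hence converges to it in $W^{1,2}$; the convergence $\nabla(\Delta_g\phi_N)\to\nabla(\Delta_g\phi)$ in $L^2$ then lets you pass to the limit directly, using only $\psi\in L^\infty$.

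Your route is slightly more economical: it never uses the assumption that $|\nabla\psi|$ is bounded. In fact, your argument already establishes the inequality for every $\psi\in\mathcal{D}(\Delta_g)\cap L^\infty(X)$ with $\psi\ge 0$ and $\Delta_g\psi\in L^\infty(X)$, which is exactly the $BE(K,n)$ condition. So the subsequent step you allude to---using ultracontractivity of the heat semigroup to drop the bounded-gradient hypothesis---is superfluous with your approach. The paper's integration-by-parts route, by contrast, genuinely needs $|\nabla\psi|\in L^\infty$ at this stage and must then remove it afterwards.
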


\begin{proof}
Denote by $\{\lambda_i\}_{i \in \N}$ the sequence of eigenvalues of the Laplacian $\Delta_g$, define $V=\mbox{span}\{\varphi_i\}_{i \in \N}$ and the multiplication operators $L_i$ on $V$ by: 
$$L_i u= a_i\lambda_i \varphi_i, \quad \mbox u=\sum_{k \in \N} a_k\varphi_k.$$
Consider the operator
$$L=\bigoplus_{i \in \N} L_i,$$
which is essentially self-adjoint and closable (see Problem 1(a) in Chapter X, \cite{ReedSimon}). Observe that the Laplacian $\Delta_g$ is a self-adjoint extension of $L$, thus it is its unique self-ajdoint extension. 

\noindent We can also construct self-adjoint extensions by considering the Friedrichs extenstion $L_F$ and the closure $\bar{L}$ of $L$. The first is obtained as the self-adjoint operator whose domain is the closure of $V$ with the norm:
$$||u||_F^2=||u||_2^2+(Lu,u)=||u||_2^2+\int_X u\Delta_g u dv_g=||u||^2_{1,2}.$$
As for the second, one needs to close $V$ with respect to the graph norm: 
\begin{equation}
\label{graphnorm}
||u||_L^2=||u||_2^2+||Lu||_2^2= ||u||_2^2+\int_X (\Delta_g u)^2 dv_g.
\end{equation}
Since $L$ is essentially self-adjoint and its extension is the Laplacian, these two extensions coincides and $V$ is dense in $\mathcal{D}(\Delta_g)$ with 
respect to both the norm of $W^{1,2}(X)$ and the graph norm \ref{graphnorm}. Therefore, for each function $\phi$ in the domain of the Laplacian, 
there exists a sequence $\{u_i\}_{i \in \N} \subset V$ which converges to $\phi$ in $W^{1,2}(X)$ and such that $\{\Delta_g u_i\}_{i \in N}$ converges to $\Delta_g \phi$. Since the Bochner inequality holds for any $u_i \in V$, we have:
\begin{equation*}
-\frac 12 \int_X \Delta_g \psi |du_i|^2 dv_g +\int_X \psi (\nabla(\Delta_g u_i), \nabla u_i)_g dv_g \geq \int_X \psi\left(K|du_i|^2dv_g +\frac{(\Delta_gu_i)^2}{n}\right)dv_g. 
\end{equation*}
We can pass to the limit as $i$ goes to infinity in the right-hand side and in the first term of the left-hand side, since both $\psi$ and its Laplacian are bounded. 
As for the second term in the left-hand side, we can rewrite it in the following way: 
\begin{equation*}
\int_X \psi (\nabla(\Delta_g u_i), \nabla u_i)_g dv_g = \int_X \psi (\Delta_g u_i)^2 dv_g - \int_X \Delta_g u_i (\nabla u_i, \nabla \psi)_g dv_g. 
\end{equation*}
Since $|\nabla \psi|$ is bounded we can use Cauchy-Schwarz inequality twice to get: 
$$
\int_X \Delta_g u_i (\nabla u_i, \nabla \psi)_g dv_g \leq C ||\Delta_g u_i||_2||\nabla u_i||_2. 
$$
Therefore, when we pass to the limit as $i$ goes to infinity we get: 
$$
\int_X \psi (\Delta_g u_i)^2 dv_g - \int_X \Delta_g u_i (\nabla u_i, \nabla \psi)_g dv_g \rightarrow \int_X \psi (\Delta_g \phi)^2dv_g -\int_X \Delta_g \phi(\nabla \phi, \nabla \psi)_g dv_g.
$$
As a consequence we can pass to the limit in the second term of the left-hand side of the Bochner inequality, and we get the desired inequality. 
\end{proof}

In order to have the integral Bochner inequality \ref{BE} implying $\mbox{RCD}(K,n)$, for the right set of test functions, we need to drop the assumption that the test function has bounded gradient. In order to do that, we are going to use the properties of the heat semigroup that we recalled in the first section. 

We are now in position to prove:

\begin{theorem}
Let $X$ be an n-dimensional stratified space endowed with an iterated edge metric $g$ with singular Ricci curvature bounded below by $K\in \R$. Then $X$ satisfies the Bakry-Émery condition $BE(K,n)$. 
\end{theorem}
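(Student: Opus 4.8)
By the previous proposition, the weak Bochner inequality \ref{BE} is already known to hold for every $\phi\in\mathcal{D}(\Delta_g)$ with $\Delta_g\phi\in W^{1,2}(X)$ and every \emph{admissible} test function $\psi$ which, on top of being nonnegative and lying in $\mathcal{D}(\Delta_g)\cap L^{\infty}(X)$ with $\Delta_g\psi\in L^{\infty}(X)$, has \emph{bounded gradient}. To obtain $BE(K,n)$ it thus only remains to drop the bounded-gradient requirement. The plan is: regularize a general test function $\psi$ by the heat flow, apply the already-established inequality to the regularized function, and let the regularization parameter tend to $0$.

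Fix $\phi$ and $\psi$ as in \ref{BE} and, for $t>0$, set $\psi_t=P_t\psi$. I claim $\psi_t$ is an admissible test function with bounded gradient. Since $(P_t)$ is the heat semigroup of the (strongly regular) Dirichlet form $\mathcal{E}$, it is positivity preserving, so $\psi_t\ge0$; it is sub-Markovian, so $\left\|\psi_t\right\|_{L^{\infty}}\le\left\|\psi\right\|_{L^{\infty}}$; and it commutes with $\Delta_g$ on $\mathcal{D}(\Delta_g)$, so $\Delta_g\psi_t=P_t(\Delta_g\psi)$ and $\left\|\Delta_g\psi_t\right\|_{L^{\infty}}\le\left\|\Delta_g\psi\right\|_{L^{\infty}}$. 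For the gradient bound, expand $\psi_t=\sum_i e^{-\lambda_i t}\langle\psi,\varphi_i\rangle\,\varphi_i$ in the eigenbasis $\{\varphi_i\}_{i\in\N}$ of $\Delta_g$. Under the singular Ricci lower bound each eigenfunction $\varphi_i$ is Lipschitz (this is exactly where the angle condition $\alpha\le2\pi$ is used, via Proposition \ref{logTub} and Lemma \ref{cutoff}), and a quantitative reading of that argument (see \cite{M14}) gives $\left\|\nabla\varphi_i\right\|_{L^{\infty}}$ bounded by a fixed power of $\lambda_i$, while $|\langle\psi,\varphi_i\rangle|\le\left\|\psi\right\|_{L^{2}}$. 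The ultracontractivity of $P_t$ (Lemma \ref{ultracontractive}) bounds the heat kernel and hence yields the trace estimate $\sum_i e^{-\lambda_i t}\le C(t)\,\vol_g(X)<\infty$; absorbing the polynomial factor into the exponential gives $\sum_i e^{-\lambda_i t}\left\|\nabla\varphi_i\right\|_{L^{\infty}}<\infty$, so the series defining $\psi_t$ converges in Lipschitz seminorm and $\psi_t$ is Lipschitz, in particular $|\nabla\psi_t|\in L^{\infty}(X)$. Therefore the previous proposition applies to the pair $(\phi,\psi_t)$, giving
\begin{equation*}
-\frac12\int_X \Delta_g\psi_t\,|d\phi|^2\,dv_g + \int_X \psi_t\,(\nabla(\Delta_g\phi),\nabla\phi)_g\,dv_g \ \geq\ \int_X \psi_t\Bigl(K|d\phi|^2+\frac{(\Delta_g\phi)^2}{n}\Bigr)dv_g .
\end{equation*}

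It remains to let $t\to0^{+}$. By strong continuity of $(P_t)$ on $L^{2}(X)$ we have $\psi_t\to\psi$ and $\Delta_g\psi_t=P_t(\Delta_g\psi)\to\Delta_g\psi$ in $L^{2}(X)$, and both families are uniformly bounded in $L^{\infty}(X)$, by $\left\|\psi\right\|_{L^{\infty}}$ and $\left\|\Delta_g\psi\right\|_{L^{\infty}}$ respectively. Since $\phi\in\mathcal{D}(\Delta_g)$ with $\Delta_g\phi\in W^{1,2}(X)$, the functions $|d\phi|^2$, $(\nabla(\Delta_g\phi),\nabla\phi)_g$ and $K|d\phi|^2+\tfrac1n(\Delta_g\phi)^2$ all lie in $L^{1}(X)$. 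Passing to an $L^2$-convergent subsequence that also converges $v_g$-a.e.\ and dominating the three integrands by the appropriate $L^{1}$ function times $\left\|\psi\right\|_{L^{\infty}}$ or $\left\|\Delta_g\psi\right\|_{L^{\infty}}$, dominated convergence lets us pass to the limit in every term; as the limits are independent of the subsequence, \ref{BE} holds for $(\phi,\psi)$. Since $\phi$ and $\psi$ were arbitrary admissible functions, $(X,d_g,v_g)$ satisfies $BE(K,n)$.

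The main obstacle is the Lipschitz regularity of $P_t\psi$: this is the only place where the curvature hypotheses re-enter, through the eigenfunction gradient estimates of Proposition \ref{logTub} and Lemma \ref{cutoff}, which are false for cone angles larger than $2\pi$, together with the trace bound $\sum_i e^{-\lambda_i t}<\infty$ coming from ultracontractivity. Everything else is routine, the only mild subtlety being that $|d\phi|^2$ is merely $L^{1}$, which is handled by the uniform $L^{\infty}$ control of $\psi_t$ and $\Delta_g\psi_t$ along the regularization.
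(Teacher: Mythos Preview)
Your proof is correct and follows the same overall strategy as the paper: regularize the test function by the heat semigroup so that the previous proposition (with the extra bounded-gradient hypothesis) applies, then let $t\to 0^{+}$. The only real difference lies in how the Lipschitz regularity of the regularized test function is obtained. You apply $P_t$ directly to $\psi$ and argue that the eigenfunction expansion $\psi_t=\sum_i e^{-\lambda_i t}\langle\psi,\varphi_i\rangle\varphi_i$ converges in Lipschitz seminorm, which requires a quantitative bound of the form $\|\nabla\varphi_i\|_{L^\infty}\lesssim\lambda_i^p$; this is plausible and can indeed be extracted from the arguments in \cite{M14}, but is not stated in the paper and constitutes a small extra input. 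The paper sidesteps this entirely: it first approximates $\psi$ (after adding a positive constant) by finite linear combinations $\psi_i\in V=\mathrm{span}\{\varphi_k\}$, notes that $P_t\psi_i$ is again a \emph{finite} combination of eigenfunctions and hence trivially Lipschitz, applies the previous proposition to $P_t\psi_i$, and then passes to the limit $i\to\infty$ using that ultracontractivity upgrades $L^2$-convergence $\psi_i\to\psi$ to \emph{uniform} convergence $P_t\psi_i\to P_t\psi$ (and likewise for $\Delta_g P_t\psi_i$). This double-limit argument is slightly more elementary since it uses ultracontractivity only qualitatively, whereas your route trades this for a quantitative eigenfunction gradient estimate plus a trace bound. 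Both the $t\to 0$ passages (your dominated-convergence argument versus the paper's weak-$*$ convergence in $(L^1)^*$) are equally valid.
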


\begin{proof}
Consider $\psi$ a test function such that $\psi \in \mathcal{D}(\Delta_g)\cap L^{\infty}(X)$, $\psi\geq 0$ and $\Delta_g \psi$ is bounded. Up to adding a positive constant to $\psi$, 
we can assume that $\psi$ is strictly positive. Let $P_t$ be the heat semigroup associated to the Laplacian; since $V$, the span of eigenfunctions, is dense in the domain of the Laplacian, 
let $\psi_i$ a sequence in $V$ converging to $\psi$ in $W^{1,2}(X)$ with $\Delta_g \psi_i$ converging in $L^2(X)$ to $\Delta_g \psi$. For fixed $t>0$, consider $P_t \psi_i$. Because of ultracontractivity of the heat semigroup in Lemma \ref{ultracontractive}, we have 
$$||P_t(\psi -\psi_i)||_{\infty} \leq C_t||\psi-\psi_i||_{2}$$
for any $t>0$, 
and therefore $P_t\psi_i$ uniformly converges to $P_t\psi$; since $\psi$ is positive, so is $P_t \psi$, and then for $i$ large enough $P_t\psi_i$ is positive too. Moreover, $\psi_i$ is a finite linear combination of eigenfunctions, then $P_t \psi_i$, $\nabla P_t\psi_i$ and $\Delta_g P_t \psi_i$ all belongs to $L^{\infty}(X)$. As a consequence, $P_t \psi_i$ satisfies the assumptions of the previous theorem and we can use it as a test function in the Bochner inequality: for all $u \in \mathcal{D}(\Delta_g)$ with $\Delta_g u \in W^{1,2}(X)$ we have
\begin{align*}
&-\frac 12 \int_X \Delta_g P_t\psi_i |du|^2 dv_g +\int_X P_t\psi_i (\nabla(\Delta_g u), \nabla u)_g dv_g \\
&\hspace{5cm}\geq \int_X P_t\psi_i\left(K|du|^2+\frac{1}{n}(\Delta_g u)\right)dv_g. 
\end{align*}
Using the uniform convergence, we can pass to the limit as $i$ goes to infinity and get: 
\begin{equation*}
-\frac 12 \int_X \Delta_g P_t\psi |du|^2 dv_g +\int_X P_t\psi (\nabla(\Delta_g u), \nabla u)_g dv_g \geq \int_X P_t\psi\left(K|du|^2+\frac{(\Delta_g u)^2}{n}\right)dv_g. 
\end{equation*}
Now if we consider the limit as $t$ goes to zero, we know that for any bounded function $f$, $P_t f$ converges to $f$ w.r.t. weak-*-topology in $(L^1(X))^*$. We can use this with $f=\psi$ and since in the previous inequality $P_t\psi$ is multiplied by functions belonging to $L^1(X)$, we can pass to the limit for $t$ going to zero and obtain: 
\begin{equation*}
-\frac 12 \int_X \Delta_g \psi |du|^2 dv_g +\int_X \psi (\nabla(\Delta_g u), \nabla u)_g dv_g \geq \int_X\psi\left(K|du|^2+\frac{(\Delta_g u)^2}{n}\right)dv_g. 
\end{equation*}
as we wished. 
\end{proof}

\begin{cor}
Let $X$ be an n-dimensional stratified space endowed with an iterated edge metric $g$ with singular Ricci curvature bounded below by $K \in \R$. Then $X$ satisfies $RCD(K,N)$ for any $N \geq n$. 
\end{cor}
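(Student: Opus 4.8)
The plan is to obtain this corollary as a direct consequence of the theorem just proved---that $(X,d_g,v_g)$ satisfies the Bakry-Émery condition $BE(K,n)$---combined with the structural facts recorded in Section~2 and the characterization of $RCD$ spaces in Theorem~\ref{th:eks}. No new analytic input is needed: the proof is essentially bookkeeping, the single genuinely new observation being the monotonicity of the Bakry-Émery condition in the dimensional parameter.

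First I would record this monotonicity. In the weak Bochner inequality \ref{BE}, for a nonnegative test function $\psi$ the term $\frac1N\int_X\psi(\Delta_g u)^2\,dv_g$ is nonincreasing in $N$, so $BE(K,n)$ implies $BE(K,N)$ for every $N\geq n$. Next I would recall from Section~2 that $(X,d_g,v_g)$ is infinitesimally Hilbertian, since $D(\Ch)=W^{1,2}(X)$ is a Hilbert space, and that it satisfies the Sobolev-to-Lipschitz property of Definition~\ref{def:sobtolip}: indeed $|\nabla u|_*=|\nabla u|_g$ holds $v_g$-a.e. for every $u\in W^{1,2}(X)$, so any Sobolev function with $|\nabla u|_*\leq 1$ has bounded gradient and, by Lemma~\ref{LipRep}, a $1$-Lipschitz representative. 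The exponential volume growth condition \ref{volumeGrowth} is immediate from $v_g(X)<\infty$. With these ingredients in hand, for $N\geq n$ with $N>1$ Theorem~\ref{th:eks} applies and yields that $(X,d_g,v_g)$ satisfies $RCD(K,N)$; equivalently, one may first conclude $RCD^*(K,N)$ and then invoke the equivalence $RCD^*(K,N)\Leftrightarrow RCD(K,N)$ of \cite{CavallettiMilman}, as in the outline opening this section.

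The only point requiring a word of care---and not a real obstacle---is the degenerate case $n=1$, where $N\geq n$ may equal $1$ while Theorem~\ref{th:eks} is stated for $N>1$. In that case $X$ is a connected closed $1$-manifold, i.e. a circle; condition (i) of Definition~\ref{RicciBD} forces $K\leq 0$ (as $\ric_g\equiv 0$ there) and condition (ii) is vacuous, while a circle with its length metric and Lebesgue measure is an $RCD(0,1)$ space, hence $RCD(K,N)$ for all $K\leq 0$ and $N\geq 1$. In all cases $(X,d_g,v_g)$ satisfies $RCD(K,N)$ for every $N\geq n$. Since every substantial difficulty---the regularity of eigenfunctions from \cite{M14}, the cut-off construction, and the ultracontractivity argument---has already been absorbed into the preceding theorem, I anticipate no obstacle here beyond assembling the statements correctly.
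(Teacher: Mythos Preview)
Your proposal is correct and follows essentially the same route as the paper: use the $BE(K,n)$ established in the preceding theorem together with the structural facts (infinitesimal Hilbertianity, Sobolev-to-Lipschitz, finite volume) to invoke Theorem~\ref{th:eks}, and then Cavalletti--Milman for the passage to $RCD(K,N)$. The paper's argument is terser and does not spell out the monotonicity $BE(K,n)\Rightarrow BE(K,N)$ for $N\geq n$ nor the degenerate case $n=1$; your making these explicit is a genuine improvement in exposition rather than a different method.
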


\noindent Indeed, Proposition 4.9 and Theorem 4.19 in \cite{EKS} state that the Bakry-Émery condition $BE(K,N)$ implies $RCD^*(K,N)$. Then $X$ is essentially non-branching, and  \cite{CavallettiMilman} proved that for an essentially non-branching metric measure space of finite measure, $RCD^*(K,N)$ is equivalent to $RCD(K,N)$. This concludes our proof.

%%%%%%%%%%%%%%%%%%%%%%%%%%%%%%%%%%%%%%%%%%%ùùù
%%%%%%%%%%%%%%%%%%%%%%%%%%%%%%%%%%%%%%%%
%%%%%%%%%%%%%%%%%%%%%%%%%%%%%%%%%%%%%%%%%

%%%%%%%%%%%%%%%%%%

\section*{Appendix}

\subsection*{Distance on a stratified space} 

In this part, we provide some technical facts needed to check that the length structure introduced in Section \ref{dist_strat} meets the assumptions described in \cite[Section 2.1]{bbi}, by using the local description of geodesic balls given in the first section. We also prove Lemma \ref{PropLipCurve}.

\begin{LemmaA}Let $(X,g)$ be a compact stratified space of dimension $n$ endowed with the iterated edge metric $g$.

\begin{enumerate}

\item For $x,y \in X^{\tiny{reg}}$, there exists an admissible curve $\gamma$ between $x$ and $y$ of finite length: $L_g(\gamma)<+\infty$.

\item For $x \in \Sigma$, there exists $C>0$ such that for any $r>0$ small enough, any radial curve $\rho: [0,r) \rightarrow B_{0,j}(x,r) \sim C_{[0,r)}(S_x)$ (where $\sim$ means the sets are homeomorphic) with respect to the cone metric $g_{C}=ds^2+s^2h_x$ satisfies
$$ L_g(\rho) \leq Cr.$$

\item For $x \in \Sigma$, $r>0$ small enough, there exists $\eps=\eps(r)$ such that for any admissible curve $\gamma \subset B_{0,j}(x,r) \sim C_{[0,r]}(S_x)$ from $x$ to a point $(r,y)\in C_{[0,r]}(S_x)$ satisfies
$$ L_g(\gamma) \geq (1-\eps)r.$$
\end{enumerate}

\end{LemmaA}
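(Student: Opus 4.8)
The plan is to prove (1) by a connectedness argument for the regular set, and to reduce (2) and (3) to the comparison between $g$ and the exact cone metric over the tangent sphere recalled in Subsection~\ref{subsubsec:tangentcones}. For (1), I would first show that $X^{reg}$ is connected, by induction on $n$. For $n=1$ one has $X=X^{reg}$, a connected manifold. For the inductive step, suppose $X^{reg}=U\sqcup V$ with $U,V$ open, nonempty and disjoint; density of $X^{reg}$ gives $\overline U\cup\overline V=X$, so connectedness of $X$ produces a point $p\in\overline U\cap\overline V\subset\Sigma$, say $p\in\Sigma^j$. By the smoothness assumption on the charts, $\mathcal{W}_p\cap X^{reg}$ is homeomorphic to $B^j(p)\times\big(C_{[0,\delta_p)}(Z_j^{reg})\setminus\{0\}\big)\cong B^j(p)\times(0,\delta_p)\times Z_j^{reg}$, which is connected since $Z_j$ is a connected stratified space of dimension $<n$, hence $Z_j^{reg}$ is connected by the inductive hypothesis; thus $\mathcal{W}_p\cap X^{reg}$ lies entirely in $U$ or in $V$, contradicting $p\in\overline U\cap\overline V$. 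Being a connected smooth manifold, $X^{reg}$ is path-connected, so $x$ and $y$ can be joined by a smooth path $\gamma$ in $X^{reg}$; this $\gamma$ is admissible, its image is a compact subset of $X^{reg}$, so $|\dot\gamma|_g$ is bounded and $L_g(\gamma)<+\infty$.

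For (2) and (3), I would fix $x\in\Sigma$ and work in the chart identifying the small ball $B_{0,j}(x,r)$ with $C_{[0,r)}(S_x)$, on whose regular part \ref{g_C} gives $|\psi_x^*g-g_C|_{g_C}\le\Lambda\varepsilon_x^\alpha$, where $g_C=ds^2+s^2h_x$. For (2): a radial curve $\rho(s)=(s,z_0)$, $s\in[0,r)$, with $z_0$ in the regular part of $S_x$ (so that $\rho$ is admissible), has $g_C$-speed identically $1$, with a bound that is uniform as $s\to0$; hence $\psi_x^*g(\dot\rho,\dot\rho)\le1+\Lambda\varepsilon_x^\alpha$ on $(0,r)$ and
\[
L_g(\rho)=\int_0^r\sqrt{\psi_x^*g(\dot\rho,\dot\rho)}\,ds\le\big(1+\Lambda\varepsilon_x^\alpha\big)^{1/2}\,r=:Cr .
\]
For (3): any admissible curve $\gamma$ from $x$ to $(r,y)$ is rectifiable, so $L_g(\gamma)\ge d_g\big(x,(r,y)\big)$; by the distance comparison \ref{d_C}, for $r$ small $d_g\big(x,(r,y)\big)\ge d_C\big(0,(r,y)\big)-\Lambda(2r)^{\alpha+1}=r-\Lambda 2^{\alpha+1}r^{\alpha+1}$, using the elementary fact that in the metric cone $(C(S_x),g_C)$ the apex is at distance exactly $r$ from a point at radius $r$. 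Thus $L_g(\gamma)\ge(1-\varepsilon(r))\,r$ with $\varepsilon(r)=\Lambda 2^{\alpha+1}r^\alpha\to0$.

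The main point to be careful about --- rather than a genuine obstacle --- is that the curves produced are genuinely admissible: their image should lie in $X^{reg}$ off a finite set and be $C^1$ there, which uses that the structural charts restrict to diffeomorphisms of the regular sets and, for the radial curves, that $z_0$ is chosen in the regular part of $S_x$ (otherwise $L_g(\rho)$ is not even defined). The only mildly non-formal ingredient is the connectedness of $X^{reg}$ used in (1), which is where the inductive structure of stratified spaces and the connectedness of the links come in; everything else is a direct application of the metric and distance comparison estimates of Section~\ref{sec-pre-stra}.
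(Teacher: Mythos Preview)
Your arguments for (1) and (2) are correct and align with the paper's. For (1) you actually supply more than the paper does: the paper simply takes a continuous curve in the ``open connected set $X^{reg}$'' and uses local Lipschitz equivalence of $g$ with the Euclidean metric plus compactness of the image, without justifying connectedness of $X^{reg}$; your inductive argument via the local product structure $\mathcal{W}_p\cap X^{reg}\cong B^j(p)\times(0,\delta_p)\times Z_j^{reg}$ fills that in. For (2) your computation is exactly the paper's.

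For (3) you take a genuinely different route. The paper works pointwise from the metric comparison \eqref{g_C}: writing $\gamma(t)=(r(t),y(t))$ in cone coordinates, it uses
\[
\psi_x^*g(\dot\gamma,\dot\gamma)\ \ge\ (1-\Lambda r^{\alpha})\,g_C(\dot\gamma,\dot\gamma)\ \ge\ (1-\eps)\,\dot r(t)^2,
\]
and integrates to get $L_g(\gamma)\ge(1-\eps)r$. You instead bound $L_g(\gamma)\ge d_g\big(x,(r,y)\big)$ and invoke the distance estimate \eqref{d_C}. This is slicker and even gives the conclusion for \emph{all} admissible curves joining $x$ to $(r,y)$, not just those confined to $B_{0,j}(x,r)$. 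The trade-off is a logical ordering concern: this appendix lemma is precisely what the paper needs in order to verify that the length structure defines a genuine distance $d_g$, whereas \eqref{d_C} is a statement \emph{about} $d_g$ whose lower-bound half is itself proved by the pointwise estimate the paper uses here. So citing \eqref{d_C} at this stage is mildly circular within the paper's internal logic; the paper's direct use of \eqref{g_C} avoids this, at the cost of one extra line of calculus.
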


\begin{remark}
For $x \in X^{\tiny{reg}}$, the last two items can be proven using the existence of arbitrary small geodesically convex neighbourhoods (see for instance \cite[Chapter 3]{docarmo}) and Gauss' lemma.
\end{remark}

\begin{proof}

To prove item (1), take any continuous curve $\gamma$ from $x$ to $y$ contained in the open connected set $ X^{\mbox{\tiny{\emph{reg}}}}$; observe that any regular point admits a neighbourhood where the iterated edge metric is locally Lipschitz equivalent to the standard Euclidean metric, the compactness of the image of $\gamma$ then guarantees $\gamma$ has finite length.

To prove item (2), take $r$ so small that 
\begin{equation}\label{appen-tec}
|\psi_x^*g(\dot{\rho},\dot{\rho}) -g_{C}(\dot{\rho},\dot{\rho})|\leq \Lambda r^{\alpha}g_{C}(\dot{\rho},\dot{\rho})
\end{equation}
holds on  $B_{0,j}(x,r)^{\tiny{reg}} \sim C_{[0,r)} (S_x^{\tiny{reg}})$, thanks to \ref{g_C}. Note that $g_{C}(\dot{\rho},\dot{\rho})=1$ since $\rho(t) = (t,y) \in C_{[0,r)} (S_x^{\tiny{reg}}) $ and $g_{C}$ is a cone metric.

The proof of item (3) also builds on \ref{appen-tec}. One can assume that $r$ is so small that $\gamma(t)= (r(t),y(t)) \in C_{[0,r)} (S_x^{\tiny{reg}})$ for $t\neq 0$. Then, the above equation gives us

$$ \psi_x^*g(\dot{\gamma},\dot{\gamma}) \geq (1- \Lambda r^{\alpha})g_{C}(\dot{\gamma},\dot{\gamma}) \geq (1-\eps)\dot{r}^2(t). $$
The result follows by integrating this inequality.
 
\end{proof}

\begin{LemmaA}
\label{LipCurve}
Let $(X,g)$ be a compact stratified space of dimension $n$ endowed with the iterated edge metric $g$. Let $\gamma:[0,1] \rightarrow X$ be an admissible curve. For any $\varepsilon$ there exists an admissible curve $\gamma_{\varepsilon}$ with the same endpoints as $\gamma$ and such that $\gamma_{\eps}((0,1))$ is contained in the regular set $X^{\tiny{reg}}$ and:
$$\mbox{L}_g(\gamma_{\varepsilon}) \leq  \mbox{L}_g(\gamma) +\varepsilon.$$
\end{LemmaA}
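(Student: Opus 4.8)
The plan is to argue by induction on the dimension $n$ of $X$. When $n=1$ the singular set is empty and one may simply take $\gamma_\varepsilon=\gamma$, so assume $n\ge 2$ and that the statement holds for every compact stratified space of dimension at most $n-1$; in particular it holds for each tangent sphere $(S_x,h_x)$, which by subsection \ref{subsubsec:tangentcones} is a compact stratified space of dimension $n-1$. By definition of admissibility, $F:=\gamma^{-1}(\Sigma)\cap(0,1)$ is a finite set, say $F=\{t_1<\dots<t_m\}$, with $x_i:=\gamma(t_i)\in\Sigma$. It suffices to modify $\gamma$ separately on $m$ pairwise disjoint subintervals, one around each $t_i$, each modification increasing the length by at most $\varepsilon/m$ and leaving the rest of the curve untouched; the resulting concatenation $\gamma_\varepsilon$ then has $\gamma_\varepsilon((0,1))\subset X^{reg}$ (all interior intersections with $\Sigma$ having been removed) and $\mathrm{L}_g(\gamma_\varepsilon)\le \mathrm{L}_g(\gamma)+\varepsilon$. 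So fix an index $i$, write $x=x_i$, $t_0=t_i$. Using the uniform radius $\eps_0$ from subsection \ref{subsubsec:tangentcones}, pick a small $\rho<\eps_0$ (to be fixed later) and, by continuity of $\gamma$ at $t_0$ together with finiteness of $F$, a small $\eta>0$ with $\gamma([t_0-\eta,t_0+\eta])\subset B(x,\rho)$ and $\gamma([t_0-\eta,t_0+\eta])\cap\Sigma\subset\{x\}$; then $p:=\gamma(t_0-\eta)$ and $q:=\gamma(t_0+\eta)$ lie in $X^{reg}\cap B(x,\rho)$.

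The heart of the argument is to join $p$ to $q$ by an admissible curve $\sigma_i$ contained entirely in $X^{reg}$ with $\mathrm{L}_g(\sigma_i)\le C_x\rho$, where $C_x$ depends only on $x$. For this I use the homeomorphism $\psi_x\colon\Omega_x\to C_{[0,\kappa\eps_0)}(S_x)$ which sends regular points to regular points, the metric comparison $|\psi_x^*g-g_C|\le\Lambda\eps_0^{\alpha}$ with $g_C=dr^2+r^2h_x$ from \eqref{g_C}, and the distance estimate \eqref{d_C}. Writing $\psi_x(p)=(r_p,z_p)$, $\psi_x(q)=(r_q,z_q)$, estimate \eqref{d_C} gives $0<r_p,r_q\le(1+\Lambda\rho^{\alpha})\rho=:\rho'$ (here $r_p,r_q>0$ because $p,q\neq x$), while $z_p,z_q\in S_x^{reg}$. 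Now pick an admissible curve $\beta_0$ in $S_x$ from $z_p$ to $z_q$ with $\mathrm{L}_{h_x}(\beta_0)\le d_{h_x}(z_p,z_q)+\tfrac12\le\diam(S_x)+\tfrac12$ (such exists since $(S_x,d_{h_x})$ is a compact geodesic space, cf. item (1) of the preceding Lemma applied to $S_x$), and apply the inductive hypothesis to $\beta_0$ with error $\tfrac12$ to obtain an admissible curve $\beta$ from $z_p$ to $z_q$ with $\beta((0,1))\subset S_x^{reg}$ and $\mathrm{L}_{h_x}(\beta)\le\diam(S_x)+1=:D_x$. Let $\tilde\sigma$ be the concatenation in $C(S_x)$ of the radial segment $u\mapsto(u,z_p)$, $u\in[r_p,\rho']$, the curve $s\mapsto(\rho',\beta(s))$, and the radial segment $u\mapsto(u,z_q)$, $u\in[r_q,\rho']$. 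Its radial coordinate stays in $[\min(r_p,r_q),\rho']$, so $\tilde\sigma$ avoids the tip and lies in the regular part $C(S_x^{reg})\setminus\{0\}$ of the cone; hence $\sigma_i:=\psi_x^{-1}\circ\tilde\sigma$ is an admissible curve from $p$ to $q$ inside $X^{reg}$. Its $g_C$-length is $(\rho'-r_p)+\rho'\,\mathrm{L}_{h_x}(\beta)+(\rho'-r_q)\le\rho'(2+D_x)$, so by \eqref{g_C} (and $\rho'\le 2\rho$ for $\rho$ small), $\mathrm{L}_g(\sigma_i)\le(1+\Lambda\eps_0^{\alpha})^{1/2}\rho'(2+D_x)\le C_x\rho$ with $C_x$ depending only on $x$.

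Replacing $\gamma|_{[t_0-\eta,t_0+\eta]}$ by $\sigma_i$ removes a nonnegative amount of length and adds at most $C_{x_i}\rho$; choosing $\rho=\rho_i$ small enough that $C_{x_i}\rho_i<\varepsilon/m$ for each $i$ (and the $m$ intervals pairwise disjoint), the concatenated, reparametrized curve $\gamma_\varepsilon\colon[0,1]\to X$ is admissible, has the same endpoints as $\gamma$, satisfies $\gamma_\varepsilon((0,1))\subset X^{reg}$, and $\mathrm{L}_g(\gamma_\varepsilon)\le\mathrm{L}_g(\gamma)+\varepsilon$, as required. The main obstacle is precisely the construction of the detour $\sigma_i$: one must push the curve off not merely the point $x$ but off the entire singular set near $x$ while keeping the added length of order $\rho$, and this is exactly what forces the induction on dimension through the link $S_x$ (equivalently, one uses that $S_x^{reg}$ is connected with controlled diameter). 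The rest is bookkeeping with the cone comparisons \eqref{g_C} and \eqref{d_C} already established in subsection \ref{subsubsec:tangentcones}.
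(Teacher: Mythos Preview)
Your proof is correct and follows essentially the same approach as the paper's: induction on the dimension, reduction to finitely many singular crossings, and at each such crossing a detour constructed in the cone model $C(S_x)$ using the inductive hypothesis on the $(n-1)$-dimensional tangent sphere $S_x$, with the length controlled via the metric comparison \eqref{g_C}. The only cosmetic differences are that the paper first isolates the exact-cone case as a separate lemma and connects the two branches of $\gamma$ at a common small radius $\delta$, whereas you work directly in the chart and push both branches radially outward to a common radius $\rho'$ before joining them along $\{\rho'\}\times S_x^{reg}$; either variant yields a detour of length $O(\rho)$, which is all that is needed.
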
 

The proof is done by induction on the dimension of the stratified space; note that in one dimension, the only compact stratified space is the circle whose singular set is empty. Therefore, from now on we assume: \\

\noindent \textbf{Induction hypothesis:} For some $n >1$, and for any compact stratified space of dimension $(n-1)$, the previous proposition holds. \\

By definition of an admissible curve, $\gamma$ meets the singular set $\Sigma$ at most finitely many times. Therefore by additivity of the length, it suffices to prove the result in the case where $\gamma$ meets $\Sigma$ in exactly one point. The proof of this fact is in two steps. First, we prove this result in the case of an exact cone metric on a truncated cone, for a curve that only intersects the singular set at the tip of the cone. Then we will use the description of geodesic balls given in the first section: in a small ball around a singular point the iterated edge metric is close to the exact cone metric on a truncated cone over the tangent sphere, therefore we can apply a similar construction to the one given in the case of an exact cone metric. Let us start with:

\begin{LemmaA}
\label{exactCone}
Let $(S,h)$ be a stratified space of dimension $(n-1)$ and consider the metric cone $(C(S), d\rho^2+\rho^2h)$ over $S$; denote by $o$ the vertex of the cone. Let $x,y$ be two regular points in $S$, $r \in (0,1)$ and $\gamma:[-a,a] \rightarrow C(S)$ an admissible curve connecting $(r,x)$ and $(r,y)$ such that $\gamma(0)=o$ and $\gamma(t) \in C(S^{\mbox{\tiny{\emph{reg}}}})\setminus \{o\}$ for any $t\neq 0$. Then for any $\varepsilon >0$, there exists $\gamma_{\eps}: [-a,a] \rightarrow C(S)$ such that: 
\begin{itemize}
\item[(i)]$\gamma_{\varepsilon}(t)$ belongs to $C(S^{\mbox{\tiny{\emph{reg}}}})\setminus \{o\}$ for all $t \in [-a,a]$;
\item[(ii)] $L_C(\gamma_{\varepsilon}) \leq L_C(\gamma) + \varepsilon,$ where $L_C$ is the length with respect to the exact cone metric $d\rho^2+\rho^2h$.
\end{itemize}
\end{LemmaA}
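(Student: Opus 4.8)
The plan is to excise the unique passage of $\gamma$ through the vertex $o$ and reroute it along a curve running at a small constant radius $\delta>0$, using the induction hypothesis on $S$ to keep that detour inside $C(S^{reg})$. \textbf{Step 1 (isolating the vertex passage).} Since $\rho\circ\gamma\colon[-a,a]\to[0,+\infty)$ is continuous with $(\rho\circ\gamma)(0)=0$ and $(\rho\circ\gamma)(\pm a)=r$, for each $\delta\in(0,r)$ the connected component $(t_-,t_+)$ of the open set $\{t:(\rho\circ\gamma)(t)<\delta\}$ containing $0$ is a proper open subinterval of $(-a,a)$, and by continuity $(\rho\circ\gamma)(t_\pm)=\delta$. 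Writing $\gamma(t_\pm)=(\delta,z_\pm)$, and using that $t_\pm\neq 0$, the hypothesis forces $\gamma(t_\pm)\in C(S^{reg})\setminus\{o\}$, hence $z_\pm\in S^{reg}$. Moreover $\gamma$ meets $o$ only at $t=0\in(t_-,t_+)$, so outside $(t_-,t_+)$ the curve $\gamma$ already takes values in $C(S^{reg})\setminus\{o\}$.

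\textbf{Step 2 (the detour at radius $\delta$).} By the induction hypothesis, applied to the $(n-1)$-dimensional compact stratified space $(S,h)$, the distance $d_h$ coincides with the infimum of lengths of admissible curves whose interior lies in $S^{reg}$; and since $S$ is compact, $D:=\diam_{d_h}(S)<+\infty$. Hence one can pick an admissible curve $c\colon[0,1]\to S$ from $z_-$ to $z_+$ with $c((0,1))\subset S^{reg}$ and $L_h(c)\le D+1=:L_0$, a bound \emph{independent of} $\delta$. Set $\sigma(s)=(\delta,c(s))$: this is an admissible curve in $C(S)$ from $(\delta,z_-)$ to $(\delta,z_+)$ whose image lies in $\{\delta\}\times S^{reg}\subset C(S^{reg})\setminus\{o\}$, and since $|\dot\sigma|^2=\delta^2|\dot c|_h^2$ with respect to the cone metric $d\rho^2+\rho^2h$, one has $L_C(\sigma)=\delta\,L_h(c)\le\delta L_0$.

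\textbf{Step 3 (concatenation and conclusion).} Define $\gamma_\varepsilon$ to agree with $\gamma$ on $[-a,t_-]$ and on $[t_+,a]$, and with an affine reparametrization of $\sigma$ onto $[t_-,t_+]$. Then $\gamma_\varepsilon$ is continuous with the same endpoints as $\gamma$, is $C^1$ off a finite set, and — the only contact of $\gamma$ with $o$ having been removed — its image lies in $C(S^{reg})\setminus\{o\}$, which is (i). Since deleting the middle arc of $\gamma$ cannot increase its length, $L_C(\gamma_\varepsilon)=L_C(\gamma|_{[-a,t_-]})+L_C(\sigma)+L_C(\gamma|_{[t_+,a]})\le L_C(\gamma)+\delta L_0$, so choosing $\delta<\min\{r,\varepsilon/L_0\}$ gives (ii).

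The genuinely delicate point — which I expect to be the main obstacle — is Step 2: producing a connecting curve between $z_-$ and $z_+$ \emph{inside} $S^{reg}$ whose length stays bounded as $\delta\to 0$. This is exactly what forces the induction: without the statement that admissible curves in the lower-dimensional link $S$ can be pushed into $S^{reg}$ with controlled length, together with the finiteness of $\diam_{d_h}(S)$, one could not guarantee that the cost $\delta L_0$ of the detour tends to $0$. Everything else — the reparametrizations, the identity $|\dot\sigma|^2=\delta^2|\dot c|_h^2$ for the cone metric, and the subadditivity of length — is routine.
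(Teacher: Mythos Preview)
Your proof is correct and follows essentially the same strategy as the paper: cut $\gamma$ where it crosses the sphere of radius $\delta$, replace the excised middle piece by a curve at constant radius $\delta$ inside $S^{reg}$ (obtained via the induction hypothesis on the link), and use $L_C=\delta\,L_h$ on that sphere together with smallness of $\delta$. Your treatment is in fact slightly tidier than the paper's: you name the angular endpoints $z_\pm$ (which may depend on $\delta$) rather than conflating them with the original $x,y$, and you bound $L_h(c)$ uniformly by $\diam_{d_h}(S)+1$, making explicit why the detour cost $\delta L_0$ tends to zero regardless of how $z_\pm$ vary with $\delta$.
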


\begin{proof}

Fix $\eps >0$, let $\delta \in (0,r)$ to be chosen later. By continuity of $\gamma$, there exists $t_0,t_1$ such that $t_0<0<t_1$, $\gamma(t_0)=(\delta,x), \gamma(t_1)=(\delta,y)$, and the radial coordinate $\rho (\gamma(t)) \leq \delta$ for $t\in [t_0,t_1]$; let us set $c_1 = \gamma|_{[-a,t_0]}$ and $c_2=\gamma|_{[t_1,a]}$, by hypothesis on $\gamma$, both $c_1$ and $c_2$ lie in the regular set of the cone $C(S)$. Now consider an admissible curve in the $(n-1)$-dimensional stratified space $S$ connecting $x$ and $y$. Thanks to the induction hypothesis, there exists an admissible curve $c_{\eps}$ from $x$ to $y$, lying in the regular set of $S$, and whose length in $S$ with respect to the metric $h$ satisfies $L_h(c_{\eps})\leq L_h(c)+ \eps$. Define $\gamma_{\eps}$ to be the concatenation of $c_1$, $c_{\eps}$ and $c_2$. Its length therefore satisfies: 
$$L_C(\gamma_{\eps}) \leq L_C(\gamma) +\delta L_h(c_{\eps}),$$
where we used that the length of $c_{\eps}$ with respect to the exact cone metric of $C(S)$ is $L_C(c_{\eps})=L_{\delta^2h}(c_{\eps})$. We can choose $\delta$ small enough so that $L_C(\gamma_{\eps})\leq  L_C(\gamma) +\eps.$ 
\end{proof}

Now consider a general compact stratified space $(X^n,g)$. We recall that  any $x \in X$ admits an open neighbourdood homeomorphic to the truncated cone $C(S_x)$ over the tangent sphere $S_x$.
Moreover, if we denote by $g_C=d\rho^2+\rho^2h$ the  cone metric on $C_{[0,r_0)}(S_x)$, we know thanks to \ref{g_C} that in $C_{[0,r_0)}(S_x)$ the metric $g$ is not far from $g_C$: there exists positive constants $\Lambda$  and $\alpha$ such that
\begin{equation}\label{e-append-lip}
|\psi^*_x  g-g_C|<\Lambda r_0^{\alpha}.
\end{equation}

Consider an admissible curve $\gamma$ of finite length $L_g(\gamma)$. We use the same notation and apply the same construction as the one in the proof of Lemma \ref{exactCone}. The point is to estimate the length $L_g(c_{\eps})$. Since $c_{\eps}$ is contained in $ S^{\mbox{\tiny{\emph{reg}}}}$, we can further assume the curve has constant speed $L_h$ with respect to the metric $h$. Thus, \ref{e-append-lip} yields
$$ |\psi^*_x g(\dot{c_{\eps}},\dot{c_{\eps}}) - \delta^2 L_h^2|\leq \Lambda \delta^{\alpha+2} L_h.$$

As a consequence, we get $L_g(c_{\eps})\leq \eps $ provided $\delta $ is chosen small enough. By construction of $\gamma_{\eps}$, we obtain
$$ L_g(\gamma_{\eps}) \leq L_g(\gamma) +\eps.$$

\subsection*{A construction of a singular stratum} We are going to illustrate one of the examples in Section 3. Consider a round sphere $\s^3$ with round metric $g_0$ and a closed circle $\s^1_{\beta}$ in $\s^3$. We are going to show that we can write the metric $g_0$ in a tubular neighbourhood $\mathcal{U}^{\eps}$ of $\s^1_{\beta}$ of size $\eps$ small enough so that $g_0$ is a perturbation of the product metric $dr^2+r^2d\varphi^2+ a^2d\theta^2$ (for $a$ the radius of $\s^1_{\beta}$). More precisely, we show that there exists a positive constant $\Lambda$ such that: 
\begin{equation}
\label{asymptotic}
|g_0 - (dr^2+r^2d\varphi^2+a^2d \theta^2)| \leq \Lambda r^{\gamma}+o(r^{\gamma}),
\end{equation}
with $\gamma=1$ if the circle $\s^1_{\beta}$ is not totally geodesic and $\gamma=2$ otherwise. 

We look at the sphere $\s^3$ in $\R^4=\R^2\times \C$ with coordinates $(x_1,x_2,\rho e^{i\theta})$. Up to changing the coordinate system in $\R^4$, we can parameterize  the circle $\s^1_{\beta}$ with the curve: 
\begin{equation*}
c(\theta)=
\left(
\begin{array}{c}
\cos(\beta) \\
0 \\
\sin(\beta)e^{i\theta}
\end{array}
\right). 
\end{equation*}
Observe that $\s^1_{\beta}$ is a circle of radius $a=\sin(\beta)$ contained in the totally geodesic sphere $\s^2$ in $\s^3$, obtained as the intersection of $\s^3$ with the plane $\{x_2=0\}$. Moreover, the case $\beta = \frac{\pi}{2}$ corresponds to a great circle in this $\s^2$. Then for $\beta=\pi/2$, $c$ is totally geodesic in $\s^3$. We aim to write the metric on a tubular neighbourhood $\mathcal{U}^{\eps}$ of $\s^1_{\alpha}$ as an admissible metric for a stratified space; we start by parameterizing the coordinates of a point in a tubular neighbourhood of $\s^1_{\beta}$. We can think of the tubular neigbourhood of $\s^1_{\beta}$ as the product of a disk orthogonal to $c$ and an appropriate interval $(-\eps, \eps)$. Therefore, in order to give the coordinates of a point in $\mathcal{U}^{\eps}$, we start by constructing the ones of a point in a unit sphere orthogonal to $c(\theta)$. We consider the following two vectors orthogonal to $c$ and to the tangent vector $\dot{c}$: 
\begin{equation*}
v_1= \left(\begin{array}{c}
0\\
1\\
0.
\end{array}
\right), \qquad 
v_2=\left(\begin{array}{c}
-\sin(\beta) \\
0 \\
\cos(\beta)e^{i\theta}
\end{array}\right)
\end{equation*}
For fixed $\theta$, $v_1$ and $v_2$ span a plane in $\R^4$ which is orthogonal to $c$ at $c(\theta)$. Therefore a point in the unit sphere normal to $c(\theta)$ can be written in the following coordinates:
\begin{equation*}
w= \sin(\varphi) v_1+\cos(\varphi)v_2=\left(\begin{array}{c}
-\sin(\beta)\sin(\varphi) \\
\cos(\varphi) \\
\cos(\beta)\sin(\varphi)e^{i\theta}
\end{array} \right), \quad \mbox{ for } \varphi \in [0,2\pi).
\end{equation*}
As for a point in a tubular neighbourhood of $c(\theta)$, we then get:
\begin{equation*}
F(r,\theta,\varphi)=\cos(r)c(\theta)+\sin(r)w, 
\end{equation*}
where $r$ varies in a small interval $(-\eps,\eps)$ in $(0,\pi)$, for some $\eps$ depending on $\beta$. We are going to write the metric in these new coordinates. Observe that we have: 
\begin{align*}
\partial_r F & =-\sin(r)c(\theta)+\cos(r)w, \quad |\partial_r F|^2= 1. \\
\partial_{\varphi}F &=\sin(r)
\left(
\begin{array}{c}
-\sin(\beta)\cos(\varphi) \\
-\sin(\varphi) \\
\cos(\varphi)\cos(\beta) e^{i\theta}. 
\end{array} 
\right)
, \quad |\partial_{\varphi}F|^2=\sin^2(r). \\
\partial_{\theta}F &=\cos(r) \left(\begin{array}{c}
0 \\
0\\
i\sin(\beta)e^{i\theta}
\end{array} \right)+ \sin(r) \left(\begin{array}{c}
0\\
0\\
i\cos(\beta)\sin(\varphi)e^{i\theta}
\end{array} \right)\\
|\partial_{\theta}F|^2 &=\cos^2(r)\sin^2(\beta)+\sin^2(r)\cos^2(\beta)\sin^2(\varphi)+\cos(r)\sin(r)\sin(2\beta)\sin(\varphi). 
\end{align*}
Note that all the mixed terms vanish. Therefore, in $\mathcal{U}^{\eps}$ the metric can be written in the following form:
\begin{align*}
g_{0}&=dr^2+\sin^2(r)d\varphi^2
+[\cos^2(r)\sin^2(\beta)+\sin^2(r)\cos^2(\beta)\sin^2(\varphi)
\\&+\cos(r)\sin(r)\sin(2\beta)\sin(\varphi)]d\theta^2.
\end{align*}
Observe that when  $\beta=\frac{\pi}{2}$, then $g_{0}$ is the round metric on $\s^3$ written as a doubly warped product on $(0,\pi)\times \s^1\times \s^1$. When we consider $\eps$ and $r$ going to zero, we get the following asymptotic expansion:
\begin{equation*}
g_{0} - (dr^2+r^2d\varphi^2+\sin^2(\beta)d\theta^2) = r\sin(2\beta)\sin(\varphi)d\theta^2+r^2\cos^2(\beta)\sin^2(\varphi)d\theta^2+o(r^3). 
\end{equation*}
When $\beta=\frac{\pi}{2}$ and thus the circle is totally geodesic, the term with factor $r$ vanishes, so that we can estimate the term on the right by some constant $\Lambda$ times $r^2$. When $\beta \neq \frac{\pi}{2}$, we estimate the term on the right by $\Lambda r +o(r)$. This proves the desired inequality \ref{asymptotic}.

\subsection*{Weak gradients} In this part, we prove the following proposition

\begin{proposition} \label{gradient}

Let $(X,g)$ be a $n-$dimensional stratified space and $m$ be the corresponding Riemannian measure. Then, for any compactly supported Lipschitz function $f$, the following equality holds $m$-a.e.

$$  |\nabla f|_{*}= \lip(f)$$
where the term on the l.h.s. is the minimal relaxed gradient   while the term on the other side is the local Lipschitz constant.

\end{proposition}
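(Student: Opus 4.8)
The plan is to establish separately the two inequalities $|\nabla f|_*\le\lip(f)$ and $|\nabla f|_*\ge\lip(f)$, working throughout on the regular set $X^{reg}$: it is a smooth (incomplete) Riemannian manifold, it carries full $m$-measure since $\vol_g(\Sigma)=0$, and on it $m$ coincides with the Riemannian measure of $g$. Note first that $f\in D(\Ch)$ (the constant sequence gives $\Ch(f)\le\frac12\int\lip(f)^2\,dm<\infty$, using $m(X)<\infty$), so $|\nabla f|_*$ is well defined, and the same constant sequence shows $\lip(f)$ is a relaxed gradient of $f$, whence $|\nabla f|_*\le\lip(f)$ $m$-a.e. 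I would also record at the outset that $\lip(f)=|\nabla f|_g$ $m$-a.e., where $\nabla f$ is the Riemannian gradient: $f$ is locally Lipschitz on the smooth manifold $X^{reg}$, hence differentiable at $m$-a.e.\ $x\in X^{reg}$; near such an $x$ the distance $d_g$ agrees with the Riemannian distance of $(X^{reg},g)$ thanks to the existence of small geodesically convex balls (cf.\ Section~\ref{RCDversRic}), and at a point of differentiability the local slope computed with a Riemannian distance equals $|\nabla f(x)|_g$.

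For the remaining inequality $|\nabla f|_*\ge|\nabla f|_g$, I would first extract a good approximating sequence. Since $|\nabla f|_*$ is a relaxed gradient, there are Lipschitz functions $f_n\to f$ in $L^2(m)$ with $\lip(f_n)\rightharpoonup\tilde g$ weakly in $L^2(m)$ for some $\tilde g\le|\nabla f|_*$; by lower semicontinuity of the $L^2$-norm and minimality of $|\nabla f|_*$ one gets $\tilde g=|\nabla f|_*$ $m$-a.e. Because $\lip(f_n)=|\nabla f_n|_g$ $m$-a.e.\ on $X^{reg}$ (same argument as above applied to $f_n$) and weakly convergent sequences are bounded, the Riemannian gradients $\nabla f_n$ form a bounded sequence in $L^2(X^{reg};TX^{reg})$; let $\nabla f_n\rightharpoonup V$ along a subsequence. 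Testing against an arbitrary $\xi\in C^\infty_c(X^{reg};TX^{reg})$, the divergence theorem on the smooth manifold $X^{reg}$ gives $\int\langle\nabla f_n,\xi\rangle_g\,dv_g=-\int f_n\,\mathrm{div}_g\xi\,dv_g$, and letting $n\to\infty$ (using $f_n\to f$ in $L^2$) identifies $V$ with the distributional gradient of $f$ on $X^{reg}$, that is $V=\nabla f$ $m$-a.e.\ since $f$ is locally Lipschitz. As the limit is independent of the subsequence, $\nabla f_n\rightharpoonup\nabla f$ weakly in $L^2(X^{reg};TX^{reg})$.

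The final step is to upgrade this weak convergence to the pointwise bound. For any open $A\subset X^{reg}$ and any $\xi\in C^\infty_c(A;TX^{reg})$ with $|\xi|_g\le1$,
\[
\int_A\langle\nabla f,\xi\rangle_g\,dm=\lim_{n}\int_A\langle\nabla f_n,\xi\rangle_g\,dm\le\liminf_n\int_A|\nabla f_n|_g\,dm=\liminf_n\int_A\lip(f_n)\,dm=\int_A|\nabla f|_*\,dm,
\]
where the last equality uses $\lip(f_n)\rightharpoonup|\nabla f|_*$ in $L^2(m)$ together with $\mathbf 1_A\in L^2(m)$ (valid since $m(X)<\infty$). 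Taking the supremum over such $\xi$ gives $\int_A|\nabla f|_g\,dm\le\int_A|\nabla f|_*\,dm$ for every open $A\subset X^{reg}$, hence $|\nabla f|_g\le|\nabla f|_*$ $m$-a.e.; combined with $|\nabla f|_*\le\lip(f)=|\nabla f|_g$ this yields the asserted equality. The crux of the argument, and essentially the only place the stratified structure enters, is the reduction to the smooth manifold $X^{reg}$: one needs that $\Sigma$ is $m$-negligible and that $d_g$ agrees locally with the Riemannian distance near regular points, after which the proof is the classical weak-$L^2$ identification of the minimal relaxed gradient with the Riemannian gradient. The technical points requiring care are the validity of the divergence theorem for Lipschitz $f_n$ tested against smooth compactly supported vector fields on the (incomplete) manifold $X^{reg}$, and the fact that the distributional gradient of a locally Lipschitz function agrees $m$-a.e.\ with its pointwise gradient.
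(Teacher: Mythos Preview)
Your argument is correct and follows a genuinely different route from the paper's. The paper proves the nontrivial inequality $\lip(f)\le|\nabla f|_*$ by an averaging trick on the unit tangent bundle: it writes $|D_zf|$ as a spherical average of difference quotients along geodesics, then exploits the invariance of the Liouville measure under the geodesic flow to pass from $f$ to the approximating $f_n$ and back, obtaining $\fint_{B(x,r)}|D_zf|\,dv\le\fint_{B(x,r)}|\nabla f|_*\,dv$ and concluding via Lebesgue differentiation. Your proof instead stays entirely within Sobolev-space theory on the smooth manifold $X^{reg}$: you identify $\lip(f)=|\nabla f|_g$ via local Rademacher plus the local coincidence of $d_g$ with the intrinsic Riemannian distance, then test the approximating gradients $\nabla f_n$ against smooth compactly supported vector fields, use the divergence theorem to pass to the limit, and recover $\int_A|\nabla f|_g\le\int_A|\nabla f|_*$ by duality. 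Your approach is the classical ``weak gradient equals distributional gradient'' argument transplanted to $X^{reg}$, and is arguably more elementary; the paper's approach is more geometric and avoids any discussion of vector-valued weak limits. Both rely on exactly the same stratified-space ingredients you isolate: $m(\Sigma)=0$ and the existence of small geodesically convex neighbourhoods of regular points. A minor remark: your detour through the weak limit $V$ of $\nabla f_n$ is not strictly needed, since the chain $\int\langle\nabla f,\xi\rangle=-\int f\,\mathrm{div}_g\xi=-\lim_n\int f_n\,\mathrm{div}_g\xi=\lim_n\int\langle\nabla f_n,\xi\rangle$ already gives what you want directly.
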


\begin{proof}

Our argument is mainly based on the fact that the Liouville measure $\Lo$ on the unit bundle of a Riemannian manifold is preserved by the geodesic flow.

Let $f$ be a compactly supported Lipschitz function on $X$ and let $f_n \in L^2 (X,m)$ be a sequence of Lipschitz functions converging to $f$ in $L^2(X,m)$  such that $|D f_n|$ weakly converge to $| Df|_{*}$ in $L^2(X,m)$. Let $B(o,R)$ be a ball on which $f$ is supported, note that by definition of $|Df|_{*}$, we can assume that the functions $f_n$ are supported in $B(o,2R)$ (just replace $f_n$ by $f_n h$ where $h$ is a Lipschitz function such that $1 \geq h\geq 0$, $h$ equals $1$ on  $B(o,R)$ and is supported on $B(o,2R)$, and conclude by minimality of the relaxed gradient). Moreover, up to mollify the $f_n$, we can further assume that $f_n$ are $C^1$ functions on the regular subset $X^{\tiny{reg}}$ of $X$. Since we look for an equality that may fail on a negligible subset, we shall restrict our attention to $X^{\tiny{reg}}$.

 In what follows, the notation  $\fint_A f \,d\mu$  means $\int_A f \,d\mu/ \mu(A)$. Let $z\in X^{\tiny{reg}}$ be a point where $f$ is differentiable and $D_z f$ be the differential of $f$ at $z$, then 
 \begin{equation}\label{average}
  \lip(f)(z)= |D_zf| = \frac{1}{c_n}\lim_{\eta \downarrow 0} \fint_{\S} \frac{|f(\exp_z (\eta u)) -f(z)|}{\eta} du
 \end{equation}
 where $c_n= 2/\big( (n-1) \int_0^{\pi} \sin^{n-2}(s) \,ds\big)$.
 
 Acccording to Lebesgue's theorem, for $m-a.e. \,x \in X$, it holds
$$  \lip(f)(x) = \lim_{r \downarrow 0} \fint_{B(x,r)}  \lip(f)(z) \, dv(z).$$
Thus, using Rademacher's theorem,  we infer from (\ref{average}) that for $m-a.e. \,x \in X$,

 \begin{eqnarray}\label{average2}   \lip(f)(x) &=&\lim_{r \downarrow 0} \fint_{B(x,r)} |D_zf| \, dv(z)  \nonumber\\
 &=& \lim_{r \downarrow 0} \lim_{\eta \downarrow 0} \fint_{B(x,r) \times \S} \frac{1}{c_n}\frac{|f(\exp_z (\eta u)) -f(z)|}{\eta} \, d\Lo (z,u).
  \end{eqnarray}

By combining the invariance of $\Lo$ under the geodesic flow with the $L^2$-convergence of $f_n$ to $f$, we get
\begin{eqnarray*} \lim_{n \rightarrow + \infty} \fint_{B(x,r) \times \S} |f(\exp_z (\eta u))-f_n(\exp_z (\eta u))| \, d\Lo (z,u) = \\
 \lim_{n \rightarrow + \infty} \fint_{B(x,r) \times \S} |f(z)-f_n(z)| \, d\Lo (z,u) 
= 0.
\end{eqnarray*}

The above equality allows us to rewrite (\ref{average2}) as

\begin{eqnarray}\label{truc}
\fint_{B(x,r)} |D_zf| \, dv(z)  & =&  \lim_{\eta \downarrow 0} \lim_{n \rightarrow + \infty} \fint_{B(x,r) \times \S} \frac{1}{c_n}\frac{|f_n(\exp_z (\eta u)) -f_n(z)|}{\eta} \, d\Lo (z,u) \nonumber \\
                                             &\leq &   \lim_{\eta \downarrow 0}\lim_{n \rightarrow + \infty}  \fint_{B(x,r) \times \S}   \frac{1}{c_n} \int_0^1  |D_{\exp_z (s\eta u) }f_n (\textstyle{\frac{\partial}{\partial s}})| \,ds \,d\Lo (z,u) \nonumber\\
                                              &\leq &   \lim_{\eta \downarrow 0}\lim_{n \rightarrow + \infty} \int_0^1 \fint_{B(x,r) \times \S}   \frac{1}{c_n}  |D_{z}f_n( u)| \,d\Lo (z,u)ds\nonumber\\
                                              &\leq &   \lim_{\eta \downarrow 0}\lim_{n \rightarrow + \infty}  \fint_{B(x,r) }  |D_zf_n|  \,dv(z)\nonumber\\
                                              &\leq &   \lim_{\eta \downarrow 0}  \fint_{B(x,r)}  | \nabla f|_{*}(z)  \,dv(z)= \fint_{B(x,r)}  | \nabla f|_{*}(z)  \,dv(z)
\end{eqnarray}
where the second inequality follows again from the invariance of $\Lo$ w.r.t. the geodesic flow and the last one from the weak convergence of $Lip(f_n)$ to $|\nabla f|_{*}$ in $L^2(X,m)$. To conclude, we combine (\ref{average2}) and (\ref{truc}) which gives the result thanks to Lebesgue's theorem.

%We use Rademacher's theorem in the first line and Fatou's lemma in the second. To get the third inequality, we note that $|\nabla f(z)|$ is bounded from above by the Lipchitz constant of $f$ thus, the support of $f$ being compact, we can assume $t$ is small enough to guarantee that the geodesic $t \mapsto exp_z(t \nabla (z))$ stays (uniformly) away from the Cut Locus of (the regular part of) $X$. Consequently, the volume distortion along any such geodesic remains uniformly bounded and the estimate on the third line then follows from the convergence $f_n \rightarrow f $ in $L^2(X,m)$.

%Let us define $F_{s,t} (z) := \exp_z (st \nabla f (z))$ whenever the formula makes sense. Rewriting
%$$ \int_{B(x,r)}  \int_0^1 |Df_n|(\exp_z (st \nabla f (z) ) |Df|(z) \,ds dv(z) = \int_0^1 \int_{B(x,r)}   |Df_n|(\exp_z (st \nabla f (z) ) |Df|(F_{s,t}^-1 (z)) \, dv(z) ds. $$

\end{proof}

%%%%%%%%%%%%%%%%%%%%
\bibliography{biblioStratifiedRCD}
\bibliographystyle{amsalpha}

\end{document}